\DeclareMathOperator{\R}{\mathbb{R}} % Real numbers
\renewcommand{\P}{\mathbb{P}} % Probability P
\newcommand{\E}{\mathbb{E}} % Expectation E 
\newcommand{\1}{\mathbf{1}}
\newcommand{\onealpha}{\1_{\alpha}}
\newcommand{\onemalpha}{\1_{-\alpha}}
\newcommand{\Ker}{\mathrm{Ker}}
\newcommand{\Img}{\mathrm{Im}}
\newcommand{\nedge}{\mathscr{E}}
\newcommand{\nvertex}{\mathscr{V}}
\newcommand{\nvright}{\mathscr{R}}
\newcommand{\Gg}{\mathcal{G}(n,m,d_1,d_2)}
\newtheorem{theorem}{Theorem}
\newtheorem{lemma}[theorem]{Lemma}
\newtheorem{proposition}[theorem]{Proposition}
\newtheorem{corollary}[theorem]{Corollary}
\newtheorem{definition}{Definition}
\newtheorem*{remark}{Remark}
\newcommand\kframe{random regular frame graph\xspace}
\newcommand{\ER}{Erd\H{o}s-R\'enyi\xspace}
\begin{document}
\title{Spectral gap in random bipartite biregular graphs and applications}
\author{Gerandy Brito}
\address{Georgia Institute of Technology, College of Computing}
\email{gbrito3@gatech.edu}
\author{Ioana Dumitriu}
\address{University of California San Diego, Mathematics}
\email{idumitriu@ucsd.edu}

\author{Kameron Decker Harris}
\address{University of Washington, Paul G.\ Allen School of Computer Science \& Engineering 
and Biology}
\curraddr{Western Washington University, Computer Science}
\email{kameron.harris@wwu.edu}

\date{\today}

%%%%%%%%%%%%%%%%
%%% Abstract %%%
%%%%%%%%%%%%%%%%

\begin{abstract}
We prove an analogue of Alon's spectral gap conjecture 
for random bipartite, biregular graphs.
We use the Ihara-Bass formula to connect the non-backtracking
spectrum to that of the adjacency matrix, 
employing the moment method to show there exists a spectral gap
for the non-backtracking matrix.
A byproduct of our main theorem is that random rectangular 
zero-one matrices with fixed row and column sums
are full-rank with high probability.
Finally, we illustrate applications to
community detection, coding theory, and
deterministic matrix completion.
\end{abstract}

\maketitle

%%%%%%%%%%%%%%%%
%%% Document %%%
%%%%%%%%%%%%%%%%

%%%%%%%%%%%%%%%%%%%%%%%%%%%%%%%%%%%%%%%%%%%%

\section{Introduction} \label{sec:intro}
%%%%%%%%%%%%%%%%%%%%%%%%%%%%%%%%%%%%%%%%%%%%

Random regular graphs,
where each vertex has the same degree $d$,
are among the most well-known examples of {\it expanders}:
graphs with high connectivity and which exhibit rapid mixing.
Expanders are of particular interest in computer science, 
from sampling and complexity theory to design of error-correcting codes.
For an extensive review of their applications, see \cite*{hoory2006}.
What makes random regular graphs particularly interesting expanders 
is the fact that they exhibit all three existing
types of expansion properties: edge, vertex, and spectral. 

The study of regular random graphs took off with the work of
\citet*{bender1974}, \citet*{bender1978},
\citet*{bollobas1980},
and slightly later \citet*{mckay1984} and \citet*{wormald1981}. 
Most often, their expanding properties are described 
in terms of the existence of the \emph{spectral gap}, which we define below. 

Let $A$ be the adjacency matrix of a simple graph,
where $A_{ij} = 1$ if $i$ and $j$ are connected and zero otherwise.
Denote
$\sigma(A) = \{\lambda_1 \geq \lambda_2 \geq \ldots \}$
as its spectrum. 
For a random $d$-regular graph, $\lambda_1 = \max_{i} |\lambda_i| = d$, 
but the second largest eigenvalue
$\eta = \max (|\lambda_2|, |\lambda_{n}| )$ 
is asymptoticly almost surely 
of much smaller order, leading to a spectral gap.
Note that we will always use $\eta$ to be the second largest 
eigenvalue of the adjacency matrix $A$.
For a list of important symbols see Appendix~\ref{sec:list_of_symbols}.

Spectral expansion properties of a graph are, strictly speaking,
defined with respect to the smallest nonzero eigenvalue of the normalized Laplacian,
$\mathcal{L} = I - D^{-1/2} A D^{-1/2}$, 
where $I$ is the identity and $D$ is the diagonal matrix of vertex degrees.
In the case of a $d$-regular graph, $\sigma(\mathcal{L})$ is a scaled
and shifted version of $\sigma(A)$.
Thus, a spectral gap for $A$ translates directly into one for $\mathcal{L}$.

The study of the second largest eigenvalue in regular graphs 
had a first breakthrough in the Alon-Boppana bound 
\citet*{alon1986},
which states that the second largest eigenvalue satisfies
\[
 \eta \geq 2 \sqrt{d-1} - \frac{c_d}{\log n} .
\]
Graphs for which the Alon-Boppana bound is attained are called {\it Ramanujan}.
\citet*{friedman2003a} proved the conjecture of \citet*{alon1986} that
almost all $d$-regular graphs have $\eta \leq 2 \sqrt{d-1} + \epsilon$
for any $\epsilon > 0$
with high probability as the number of vertices goes to infinity. This result was simultaneous simplified and deepened in \citet*{friedman2014}. 
More recently, \citet*{bordenave2015} gave a different proof that
$\eta \leq 2 \sqrt{d-1}+\epsilon_n$ 
for a sequence $\epsilon_n \rightarrow 0$ as $n$, 
the number of vertices, tends to infinity; the new proof is based on the non-backtracking operator and the Ihara-Bass identity.  

\subsection{Bipartite biregular model}

In this paper we prove the analog of Friedman and Bordenave's result 
for bipartite, biregular random graphs. 
These are graphs for which the vertex set partitions into two independent
sets $V_1$ and $V_2$, such that all edges occur between the sets.
In addition, all vertices in set $V_i$ have the same degree $d_i$.
See Figure~\ref{fig:bipartite_schematic} for a schematic of such a graph.
Along the way, we also bound the smallest positive eigenvalue and
the rank of the adjacency matrix.

Let $\mathcal{G}(n,m,d_1,d_2)$ be the uniform distribution of
simple, bipartite, biregular random graphs. 
Any $G \sim \mathcal{G}(n,m,d_1,d_2)$ 
is sampled uniformly from the set of simple bipartite graphs 
with vertex set $V=V_1 \bigcup V_2$, 
with $|V_1|=n$, $|V_2|=m$ 
and where every vertex in $V_i$ has degree $d_i$. 
Note that we must have $n d_1 = m d_2 = |E|$.
Without any loss of generality, we will assume $n \leq m$
and thus $d_1 \geq d_2$
when necessary.
Sometimes we will write that $G$ is a $(d_1, d_2)$-regular graph, 
when we want to explicitly state the degrees.
Let $X$ be the $n \times m$ matrix with entries $X_{ij}=1$ 
if and only if there is an edge between vertices 
$i \in V_1$ and $j \in V_2$. 
Using the block form of the adjacency matrix
\begin{equation}
\label{eq:matrix_blocks}
A=\left(\begin{array}{cc} 0 & X \\
X^* & 0 \end{array}\right ),
\end{equation}
It is well known that $\mathcal{G}(n,m,d_1,d_2)$ is connected with high probability, as long as $d_i\geq 3$. From \eqref{eq:matrix_blocks}, it can be verified that all eigenvalues of $A$
occur in pairs $\lambda$ and $-\lambda$,
where $|\lambda|$ is a singular value of $X$,
along with at least $|n - m|$ zero eigenvalues.
For these reasons, the second largest eigenvalue is
$\eta = \lambda_2(A) = -\lambda_{n+m-1} (A)$.
Furthermore,  
the leading or Perron 
eigenvalue of $A$ is always $\sqrt{d_1 d_2}$, matched to the left by $-\sqrt{d_1d_2}$,
which reduces to the result for $d$-regular when $d_1 = d_2$.
% and assuming an eigenvector of the form 
% $v=(v_1,v_2)$ with $v_1\in \R^n$, $v_2\in \R^m$,
% with eigenvalue $\lambda \in \sigma(A)$,
% we see that 
% \begin{eqnarray*}
% X   v_2 &= \lambda v_1 \\ 
% X^* v_1 &= \lambda v_2
% \end{eqnarray*}
% which imply another system of equations
% \begin{eqnarray*}
% (X^* X) v_2 &= \lambda X^* v_1 = \lambda^2 v_2 \\ 
% (X X^*) v_1 &= \lambda X   v_2 = \lambda^2  v_1.
% \end{eqnarray*}
% We observe that the eigenvalues of $X X^*$ and $X^* X$ are just $\lambda^2$, 
% where $\lambda \in \sigma(A)$,
% so the singular values of $X$ and $X^T$ are just the eigenvalues $| \lambda |$ 
% and possibly zero.
% Furthermore, taking $v_1, v_2$ proportional to the all-ones vector, 
% the leading eigenvalue of $A$ is clearly $\sqrt{d_1 d_2}$,
% since every row of $X X^*$ and $X^* X$ has $d_1 d_2$ entries.

We will focus on the spectrum of the adjacency matrix.
Similar to the case of the $d$-regular graph, 
in the bipartite, biregular graph, 
the spectrum of the normalized Laplacian is a scaled and shifted version
of the adjacency matrix:
Because of the structure of the graph,
$D^{-1/2} A D^{-1/2} = \frac{1}{\sqrt{d_1 d_2}} A$.
Therefore, a spectral gap for $A$ again implies that one exists for $\mathcal{L}$.

\begin{figure}[t!]
\centering
\includegraphics[width=0.5\linewidth]{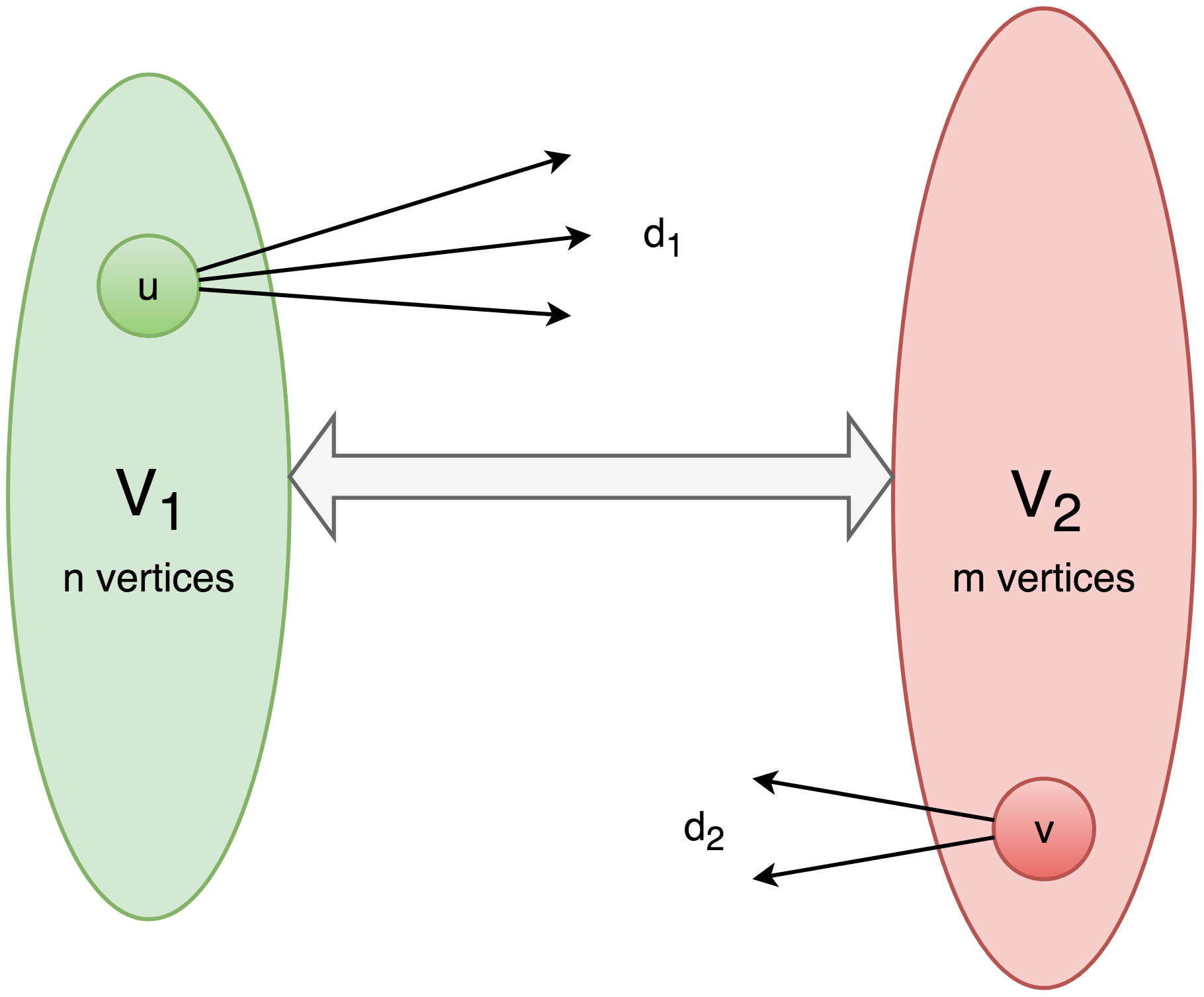}
\caption{The structure of a bipartite, biregular graph. 
	There are $n = |V_1|$ left vertices, $m=|V_2|$ right vertices,
    each of degree $d_1$ and $d_2$, with the constraint that
    $n d_1 = m d_2$.
	The distribution $\mathcal{G}(n,m,d_1,d_2)$ is taken uniformly over all such graphs.
    }
\label{fig:bipartite_schematic}
\end{figure}

Previous work on bipartite, biregular graphs
includes the work of \citet*{feng1996} and \citet*{li1996},
who proved the analog of the Alon-Boppana bound. 
For every $\epsilon >0$,
\begin{eqnarray}
\label{lower_bound}
\eta \geq \sqrt{d_1-1} + \sqrt{d_2 - 1} - \epsilon
\end{eqnarray}
as the number of vertices goes to infinity. 
This bound also follows immediately from the fact that 
the second largest eigenvalue cannot be asymptotically smaller 
than the right limit of the asymptotic 
support for the eigenvalue distribution,
which is $\sqrt{d_1-1} + \sqrt{d_2 - 1}$ and was first computed by
\citet*{godsil1988}.
They found the spectral measure 
$\mu(\lambda)$ has a point mass at $\lambda=0$ of size 
$\frac{1}{2} |d_1 - d_2| / (d_1 + d_2)$
and a continuous part given by the density

\begin{equation}
    \mathrm{d}\mu(\lambda) = 
    \frac{d_1 d_2 \sqrt{(- \lambda^2 + d_1 d_2 - (z - 1)^2)
                (\lambda^2 - d_1 d_2 + (z + 1)^2)}}
            {\pi (d_1 + d_2) (d_1 d_2 - \lambda^2) |\lambda|}~, 
    \label{eq:bipartite_esd}
\end{equation}
supported on 
$$|\sqrt{d_1-1} - \sqrt{d_2 - 1}| \leq |\lambda| \leq \sqrt{d_1-1} + \sqrt{d_2 - 1}~,~~\mbox{
where} ~~z=\sqrt{(d_1 - 1)(d_2 - 1)}~.$$
% As we will show here, this also follows from the Ihara-Bass formula
% \citep*{bass1992,kotani2000}, see section \ref{sec: connecting} \textcolor{red}{I don't have a proof of this from the Ihara-Bass formula}.

Graphs where $\eta$ attains the
Alon-Boppana bound, Eqn.~\eqref{lower_bound}, are also called Ramanujan.
Complete graphs are always Ramanujan but not sparse, 
whereas $d$-regular or bipartite $(d_1, d_2)$-regular graphs are sparse. 
Our results show that  
almost every $(d_1, d_2)$-regular graph is ``almost'' Ramanujan.

Beyond the first two eigenvalues, 
we should mention that \citet*{bordenave2010}
studied the limiting spectral distribution
of large sparse graphs.
They obtained a set of two coupled equations
that can be solved for the eigenvalue distribution of any
$(d_1, d_2)$-regular random graph. 
The solution of the coupled equations for fixed $d_1$ and $d_2$ shows convergence of the spectral distribution of a random regular bipartite graph to the Mar\v{c}enko-Pastur law. 
This was first observed by \citet*{godsil1988}. 
For $d_1, d_2 \to \infty$ with
$d_1/d_2$ converging to a constant, \citet*{dumitriu2016} showed that the limiting spectral distribution converges to a transformed version of the Mar\v{c}enko-Pastur law. When $d_1 = d_2 = d$, this is equal to the
Kesten-McKay distribution (\citet*{mckay1981}),
which becomes the 
semicircular law as $d \to \infty$
(\citet*{godsil1988, dumitriu2016}). 
Notably, \citet*{mizuno2003} 
obtained the same results when they calculated the asymptotic
distribution of eigenvalues for bipartite, biregular graphs of high girth.
However, their results are not applicable to 
random bipartite biregular graphs as these 
asymptotically almost surely have low girth (\citet*{dumitriu2016}).

Our techniques borrow heavily from the results of \citet*{bordenave2015a}
and \citet*{bordenave2015},
who simplified the trace method of \citet*{friedman2003a}
by counting non-backtracking walks built up of segments with at most one cycle, and by relating the eigenvalues of the adjacency matrix to the eigenvalues of the non-backtracking one via the Ihara-Bass identity.
The combinatorial methods we use to bound the number of 
such walks are similar to how 
\citet*{brito2015} counted self-avoiding walks
in the context of community recovery in a regular stochastic block model.

Finally, we should mention that similar techniques have been employed by \citet*{coste2017} 
to study the spectral gap of the Markov matrix of a random directed multigraph. 
The non-backtracking operator of a bipartite biregular graph 
could be seen as the adjacency matrix of a directed multigraph, 
whose eigenvalues are a simple scaling away from the eigenvalues of the Markov matrix of the same. 
However, the block structure of our non-backtracking matrix means that 
the corresponding multigraph is bipartite, 
and this makes it different from the model used in \citet*{coste2017}. 

\subsection{Configuration versus random lift model}\label{sec:1.2}

Random lifts are a model that allows the construction of large,
random graphs by repeatedly lifting the vertices of a base graph
and permuting the endpoints of copied edges.
See
\citet*{bordenave2015} for a recent overview.
A number of spectral gap results have been obtained for random lift models,
e.g.\ \citet*{friedman2003a,angel2007,friedman2014}, and \citet*{bordenave2015}.

Random lift models are contiguous with the configuration model 
in very particular cases.
See Section~\ref{sec:configuration_model} for a definition of 
the configuration model; 
this is a useful substitute for the uniform model and is practically 
equivalent.
For even $d$, 
random $n$-lifts of a single vertex with $d/2$ self-loops
are equivalent to the $d$-regular 
configuration model.
For odd $d$, no equivalent lift construction is known
or even believed to exist.

For $(d_1, d_2)$-biregular, bipartite
graphs the situation is more complicated.
A celebrated result due to \citet*{marcus2013} 
showed the existence of infinite
families of $(d_1, d_2)$-regular bipartite graphs that are Ramanujan.
That is, with 
$\eta = \sqrt{d_1-1} + \sqrt{d_2 - 1}$
by taking repeated lifts of the complete bipartite graph on $d_1$ left and $d_2$ right vertices $K_{d_1,d_2}$.
If $d_1=d_2=d$, then the configuration model is contiguous to 
the random lift of the multigraph with two vertices and
$d$ edges connecting then.
Certainly, for a biregular bipartite graph with
$n/d_2=m/d_1=k$ not an integer,
we cannot construct it by lifting $K_{d_1, d_2}$
as considered by \citet*{marcus2013}.
But even for $k$ integer, it seems likely the 
two models are not contiguous,
for the reasons we now explain.

Suppose there were a base graph $G$ that could be lifted to produce
any $(3,2)$-biregular, bipartite graph.
Consider another graph $H$ which is
a union of 2 complete bipartite graphs $K_{2,3}$.
Then $H$ is a $(3,2)$-biregular, bipartite graph and occurs
in the configuration model with nonzero probability. 
The only $G$ that $H$ could be a lift of is $K_{2,3}$, 
because it is a disconnected union and 
$K_{2,3}$ itself is not a lift of any graph or multigraph
(note that $2+3=5$ is prime). 
Therefore, $G$ would have to be $K_{2,3}$.
Figure~\ref{fig:disallowed_graph}
shows an example of another graph $H'$ with
the same number of vertices as $H$ which is $(3,2)$-biregular, 
bipartite but is {\em not} a lift of $K_{2,3}$.
Now, $H$ and $H'$ both occur in the configuration model with equal, 
nonzero probability.
Therefore, we cannot construct {\em every example} of a
$(3,2)$-biregular, bipartite graph
by repeatedly lifting a {\em single} base graph $G$. 
\begin{figure}[t!]
    \centering
    \includegraphics[width=0.5\linewidth]{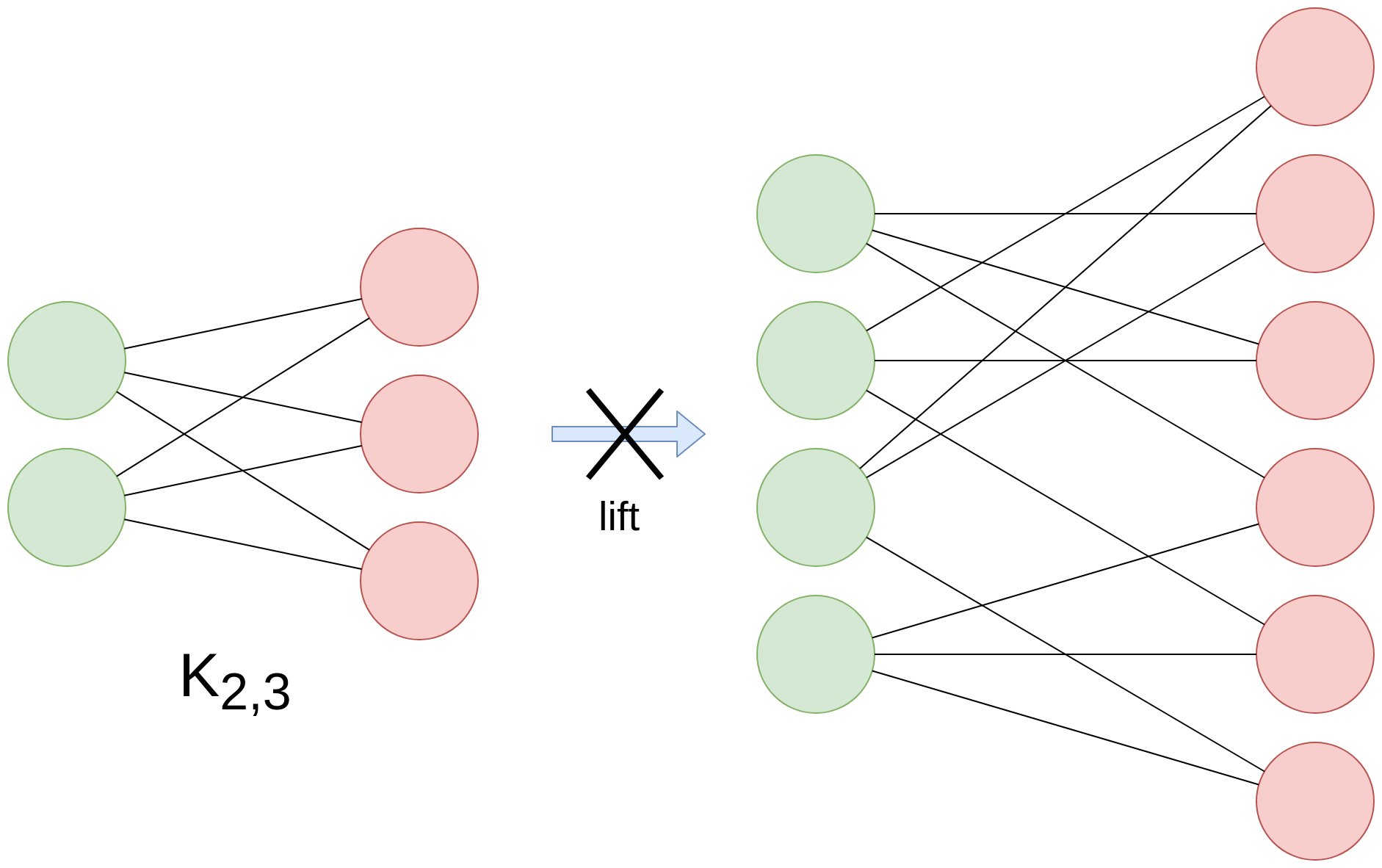}
    \caption{The $(3,2)$-bipartite, biregular 
        graph on the right is not a 2-lift of $K_{2,3}$.
        Every pair of left vertices shares a neighbor on the right.
    }
    \label{fig:disallowed_graph}
\end{figure}

Since the eventual goal of any argument based on lifts that also applies 
to the configuration model would have to show that almost all
bipartite, biregular graphs can be obtained by lifting and 
are sampled asymptotically uniformly from the lift model,
the above considerations suggest this argument would be 
highly non-trivial. 
We in fact doubt such an argument can be made.
Intuitively, in the configuration model edges occur
``nearly independently,'' 
whereas for random lifts there are strong dependencies 
due to the fact that many edges are not allowed;
see
\citet*{bordenave2015}.

\subsection{Structure of the paper}

Briefly, we now lay out the method of proof that the 
bipartite, biregular random graph is Ramanujan.
The proof outline is given in detail in Section~\ref{sec:proof_outline},
after some important preliminary terms and definitions given in 
Section~\ref{sec:preliminaries}.
The bulk of our work builds to Theorem~\ref{thm:gap_B},
which is actually a bound
on the second eigenvalue of the non-backtracking matrix $B$,
as explained in Section~\ref{sec:non_backtracking_matrix}.
The Ramanujan bound on the second eigenvalue of $A$ then follows 
as Theorem~\ref{cor:gap_A}.
As a side result, we find that row- and column-regular, rectangular 
matrices 
(the off-diagonal block $X$ of the adjacency matrix in Eqn.~\eqref{eq:matrix_blocks})
with aspect ratio smaller than one 
($d_1 \neq d_2$) have full rank with high probability.

To find the second eigenvalue of $B$,
we subtract from it a matrix $S$ that is formed from the leading eigenvectors,
and examine the spectral norm of the 
``almost-centered'' matrix $\bar{B} = B - S$.
We then proceed to use the trace method to bound 
the spectral norm of the matrix $\bar{B}^\ell$ by its trace.
However, since $\bar{B}$ is not positive definite, this leads us to consider
\[
\E\left(\|\bar{B}^{\ell}\|^{2k}\right)
\leq 
\E\left( \mathrm{Tr} \left( (\bar{B}^{\ell}) (\bar{B}^{\ell})^* \right)^{k} \right)~.
\]
On the right hand side, the terms in $\bar{B}^\ell$ refer to circuits built up of
$2k$ segments, each of length $\ell+1$,
since an entry $B_{ef}$ is a walk on two edges.
Because the degrees are bounded, it turns out that, for $\ell = O(\log (n))$,
the depth $\ell$ neighborhoods of every vertex contain at most one cycle---they are ``tangle-free.''
Thus, we can bound the trace by computing the expectation of the circuits that contribute,
along with an upper bound on their multiplicity, 
taking each segment to be $\ell$-tangle-free.

Finally, to demonstrate the usefulness of the spectral gap,
we highlight three applications of our bound.
In Section~\ref{sec:communities}, we show a community detection application. 
Finding communities in networks is important
for the areas of social network, bioinformatics,
neuroscience, among others.
Random graphs offer tractable models to study when detection and 
recovery are possible. 

We show here how our results lead to 
community detection in regular stochastic block models with 
arbitrary numbers of groups, using a very general theorem by
\citet*{wan2015}. 
Previously, \citet*{newman2014} studied the spectral density of such models,
and the community detection problem of the special case of 
two groups was previously studied by \citet*{brito2015} and \citet*{barucca2017}.

In Section~\ref{sec:codes}, 
we examine the application to linear
error correcting codes built from sparse expander graphs.
This concept was first introduced by \citet*{gallager1962}
who explicitly used random bipartite biregular graphs.
These ``low density parity check'' codes
enjoyed a renaissance in the 1990s, 
when people realized they were well-suited to modern computers.
For an overview, see \citet*{richardson2003,richardson2008}.
Our result yields an explicit lower bound on the minimum distance
of such codes, i.e.\ the number of errors that can be corrected.

The final application, in Section~\ref{sec:matrix_completion},
leads to generalized error bounds for matrix completion.
Matrix completion is the problem of reconstructing a matrix from
observations of a subset of entries.
\citet*{heiman2014} gave an algorithm for reconstruction of a
square matrix with low complexity as measured by
a norm $\gamma_2$, which is similar to the trace norm
(sum of the singular values,
also called the nuclear norm or Ky Fan $n$-norm).
The entries which are observed are at the nonzero entries 
of the adjacency matrix of a bipartite, biregular graph. 
The error of the reconstruction is bounded above by a factor which is proportional
to the ratio of the leading two eigenvalues, so that a graph with larger
spectral gap has a smaller generalization error.
We extend their results to rectangular graphs, 
along the way strengthening them by a constant factor of two. 
The main result of the paper gives an explicit bound in terms of $d_1$ and $d_2$.

As this paper was being prepared for submission, 
we became aware of the work of \citet*{deshpande2018}.
In their interesting paper, they use the {\it smallest} positive eigenvalue 
of a random bipartite lift to study convex relaxation techniques for random
not-all-equal-3SAT problems.
It seems that our main result addresses the
configuration model version of this constraint satisfaction problem,
the first open question listed at the end of \citet*{deshpande2018}.

\section{Non-backtracking matrix $B$}\label{sec2:B}

\label{sec:non_backtracking_matrix}

Given $G \sim \mathcal{G}(n,m,d_1,d_2)$, 
we define the non-backtracking operator $B$.
This operator is a linear endomorphism of $\R^{|\vec{E}|}$,
where $\vec{E}$ is the set of oriented edges of $G$ and $|\vec{E}| = 2|E|$.
Throughout this paper, we will use $V(H)$, $E(H)$, and $\vec{E}(H)$
to denote the vertices, edges, and oriented or directed edges
of a graph, subgraph, or path $H$.
For oriented edges $e=(u,v)$,
where $u$ and $v$ are the starting and ending vertices of $e$,
and $f=(s,t)$, define:
 \[
B_{ef}=
\begin{cases}
  1,&\textrm{if $v=s$ and $u\neq t$};\\
  0,& \textrm{otherwise}.
\end{cases}
\]
We order the elements of $\vec{E}$ as $\{e_1, e_2,\cdots,e_{2|E|}\}$, so that the first $|E|$ have end point in the set $V_2$. In this way, we can write
\[
B = 
\left(
\begin{array}{cc} 
0 & B^{(12)} \\
B^{(21)} & 0 
\end{array} 
\right) .
\]
for $|E| \times |E|$ matrices 
$B^{(12)}$ and $B^{(21)}$ with entries equal to $0$ or $1$.

We are interested in the spectrum of $B$. 
% Denote the spectrum of an operator $L$ by $\sigma(L)$. Set: 
% \[
% \sigma(B)=\{\lambda_1\geq \lambda_2\geq \cdots \geq \lambda_{2|E|}\}
% \]
Denote by $\onealpha$ the vector with first $|E|$ coordinates equal to $1$ and the last $|E|$ equal to $\alpha = \sqrt{d_1-1}/\sqrt{d_2-1}$. We can check that
\[
B \onealpha = B^* \onealpha =\lambda \onealpha
\]
for $\lambda=\sqrt{(d_1-1)(d_2-1)}$. 
By the Perron-Frobenius Theorem,
we conclude that $\lambda_1=\lambda$ 
and the associated eigenspace has dimension one. 
Also, one can check that if $\lambda$ is an eigenvalue of $B$ 
with eigenvector $v=(v_1,v_2)$, 
$v_i\in \mathbb{R}^{|E|}$ 
then $-\lambda$ is also an eigenvalue with eigenvector $v'=(-v_1,v_2)$. 
Thus, $\sigma(B)=-\sigma(B)$ and $\lambda_{2|E|}=-\lambda_1$. 

\subsection{Connecting the spectra of $A$ and $B$}\label{sec: connecting}

\begin{figure}[t]
\centering
\includegraphics[trim={5 0 30 20},clip,width=0.49\linewidth]
{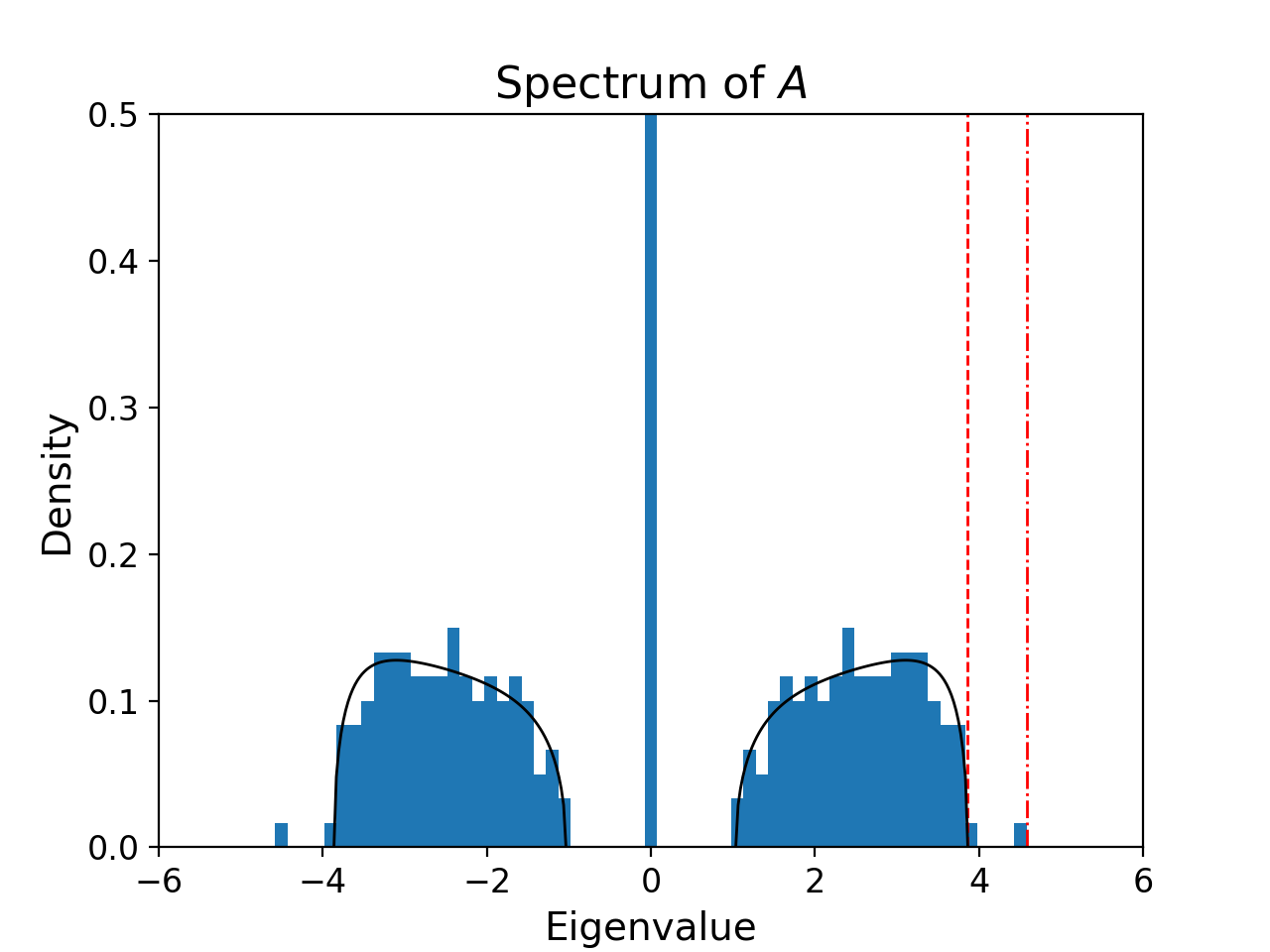}
\hfill
\includegraphics[trim={5 0 30 20},clip,width=0.49\linewidth]
{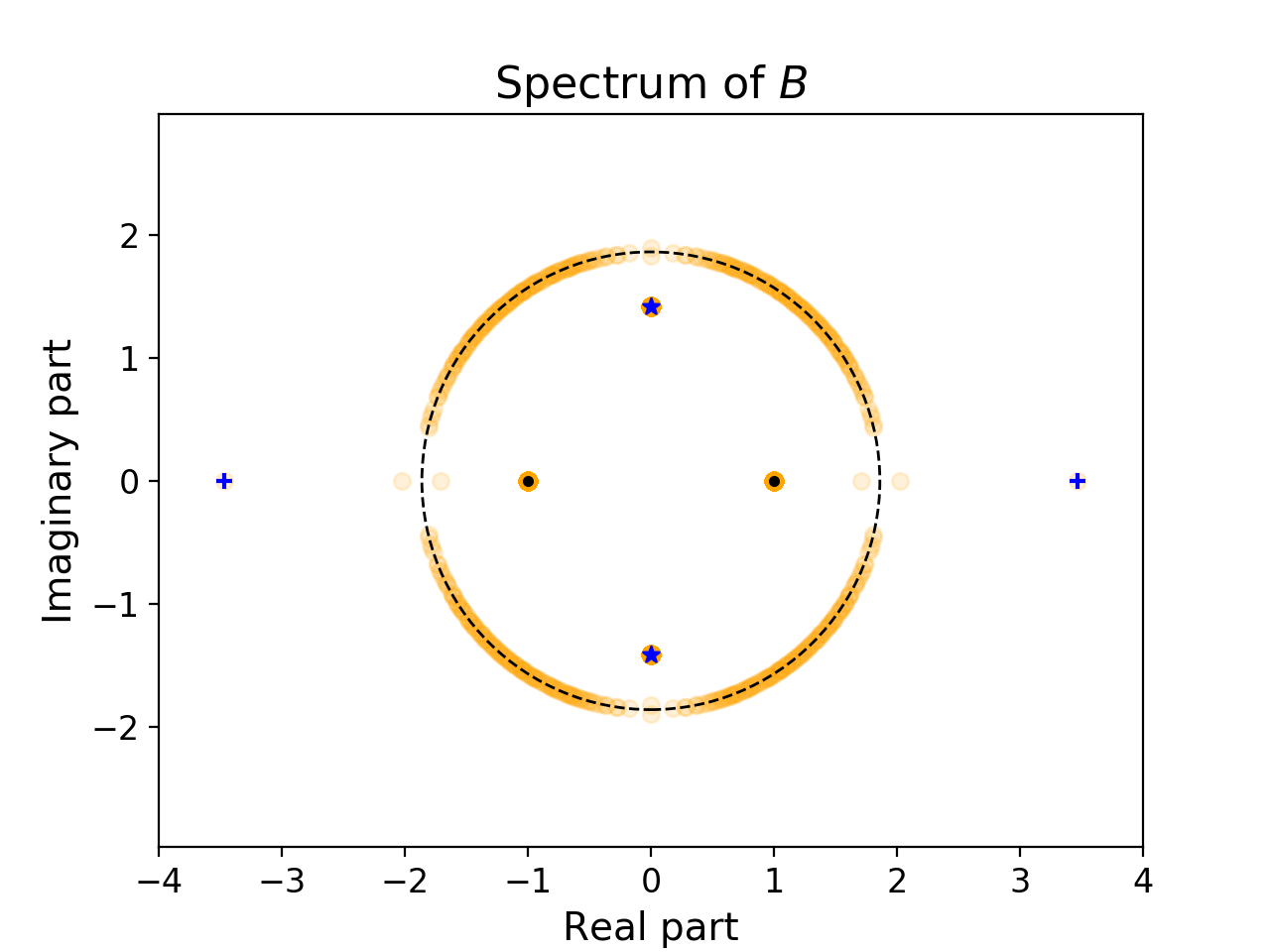}
\caption{
Example spectra for a sample graph $G \sim \mathcal{G}(120, 280, 7, 3)$.
Left, we depict the spectrum of the adjacency matrix $A$.
The dash-dotted line marks the leading eigenvalue, while the 
dashed line marks our bound for the second eigenvalue, 
Theorem~\eqref{cor:gap_A}.
The Mar\u{c}enko-Pastur
limiting spectral density, Eqn.~\eqref{eq:bipartite_esd}, is shown in black.
Right, we depict the spectrum of the non-backtracking matrix
$B$ for the same graph. 
Each eigenvalue is shown as a transparent orange circle, 
the leading eigenvalues are marked with blue crosses, 
and the eigenvalues arising from zero eigenvalues of $A$ are marked with blue stars.
Our main result, Theorem~\ref{thm:gap_B}, 
proves that with high probability the non-leading eigenvalues
are inside, on, or very close to the black dashed circle.
In this case there are 8 outliers of the circle, which arise from 2 pairs of eigenvalues 
below and above the Mar\u{c}enko-Pastur bulk. 
}
\label{fig:spectrum_of_sample}
\end{figure}

Understanding the spectrum of $B$ turns out to be a challenging question. 
A useful result in this direction 
is the following theorem proved by \citet*{bass1992}, and subsequently in \citet*{watanabe2009} and \citet*{kotani2000};
see also Theorem 3.3 in \citet*{angel2015}.
\begin{theorem}[Ihara-Bass formula]\label{thm:ihara}
Let $G=(V,E)$ be any finite graph and $B$ be its non-backtracking matrix. 
Then
\[
\mathrm{det}(B-\lambda I) = (\lambda^2-1)^{|E|-|V|} 
  \mathrm{det}(D-\lambda A+\lambda^2 I),
\]
where $D$ is the diagonal matrix with $D_{vv}=d_v - 1$ 
and $A$ is the adjacency matrix of $G$. 
\end{theorem}
We use the Ihara-Bass formula to analyze the relationship of the spectrum of $B$ to the spectrum of $A$ in the case of a bipartite biregular graph.
It will turn out that this relationship can be completely unpacked. 
From Theorem~\ref{thm:ihara}, we get that
\[
\sigma(B)=\{\pm 1\}\bigcup\{\lambda:~ D-\lambda A+\lambda^2 I~\textrm{is not invertible}\}.
\]
Note that there are precisely $2(m+n)$ eigenvalues of $B$ that are determined by $A$, and that $\lambda = 0$ is not in the spectrum of $B$, since the graph has no isolated vertices ($\det(D) \neq 0$).

We use the special structure of $G$ to get a more precise description of $\sigma(B)$. 
The matrices $A$ and $D$ are equal to:
\[
A=\left(\begin{array}{cc} 0 & X \\
X^* & 0 \end{array}\right ),\ \
D=\left(\begin{array}{cc} (d_1-1)I_n & 0 \\
0 & (d_2-1)I_m \end{array}\right ),
\]
where $I_k$ is the $k\times k$ identity matrix.
Let $\lambda\in \sigma(B)\backslash\{-1,1\}$. 
Then there exists a nonzero vector $v$ such that
\[
(D-\lambda A+\lambda^2 I)v = 0 .
\]
Writing $v=(v_1,v_2)$ with $v_1\in \mathbb{C}^n$, $v_2\in \mathbb{C}^m$, we obtain:
\begin{eqnarray} 
\label{eq:sv_1}
Xv_2 & =& \frac{d_1-1+\lambda^2}{\lambda}v_1, \\
\label{eq:sv_2} 
X^*v_1 & = &\frac{d_2-1+\lambda^2}{\lambda}v_2~.
\end{eqnarray}    
The above imply that, provided that the right hand side is non-zero, 
\begin{eqnarray} 
\label{eq:quadratic_1}
\xi^2=\frac{(d_1-1+\lambda^2)(d_2-1+\lambda^2)}{\lambda^2}
\end{eqnarray}
is a nonzero eigenvalue of both $XX^*$ with eigenvector $v_1$ and $X^*X$, with eigenvector $v_2$.
We can rewrite Eqn.~\eqref{eq:quadratic_1} as
\begin{eqnarray} 
\label{eq:quadratic_2}
\lambda^4 - (\xi^2 - d_1 - d_2+2) \lambda^2 + (d_1-1)(d_2-1) = 0~.
\end{eqnarray}

%From this relation we can derive the lower bound in \eqref{lower_bound} as follows, for any $\epsilon>0$, if $|\lambda_2|<\sqrt{d_1-1}+\sqrt{d_2-1}-\epsilon$ we get $|\eta^2-d_1-d_2-2|< 2\sqrt{(d_1-1)(d_2-1)}$

We will now detail how the eigenvalues of $A$ (denoted $\xi$ here) 
map to eigenvalues of $B$ and vice-versa.
Let us examine the special case $\xi = 0$. Assume $n \leq m$ for simplicity.
Assume that the rank of $X$ is $r$. 
Then $X$ has $m-r$ independent vectors in its nullspace. 
Let $u$ be one such vector. 
Now, if we pick $v_2 = u$, $v_1 = 0$, and $\lambda = \pm i \sqrt{d_2-1}$,
Eqns.~\eqref{eq:sv_1} and \eqref{eq:sv_2} are satisfied. 
Hence, $\pm i \sqrt{d_2-1}$ are eigenvalues of $B$, both with multiplicity $m-r$. 

Since the rank of $X$ is $r$, it follows that the nullity of $X^*$ is $n-r$, 
so there are $n-r$ independent vectors $w$ for which $X^*w = 0$. 
Now, note that picking $v_1 = w$, $v_2 = 0$, and $\lambda = \pm i \sqrt{d_1 -1}$, 
we satisfy Eqns.~\eqref{eq:sv_1} and \eqref{eq:sv_2}. 
Thus, $\pm i \sqrt{d_1-1}$ are eigenvalues of $B$, both with multiplicity $n-r$.

The remaining $4r$ eigenvalues of $B$ determined by $A$ come from nonzero
eigenvalues of $A$.
For each $\xi^2$ with $\xi$ a nonzero eigenvalue of $A$, 
we will have precisely $4$ complex solutions to Eqn.~\eqref{eq:quadratic_1}. 
Since there are $2r$ such eigenvalues, 
coming in pairs $\pm \xi$, they determine a total of $4r$ eigenvalues of $B$, 
and the count is complete.  
To summarize the discussion above, we have the following Lemma:
\begin{lemma} 
\label{A2B} 
Any eigenvalue of $B$ belongs to one of the following categories:
\begin{enumerate}
\item $\pm 1$ are both eigenvalues with multiplicities $|E|-|V| = nd_1 - m -n$,
\item $\pm i \sqrt{d_1-1}$ are eigenvalues with multiplicities $m-r$, where $r$ is the rank of the matrix $X$,
\item $\pm i \sqrt{d_2-1}$ are eigenvalues with multiplicities $r$, and
\item every pair of non-zero eigenvalues $(-\xi, \xi)$ of $A$ generates exactly $4$ eigenvalues of $B$. 
\end{enumerate}
\end{lemma}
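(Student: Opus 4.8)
The plan is to pass to characteristic polynomials and factor $\det(B-\lambda I)$ completely, reading each of the four items off the factorisation. By the Ihara--Bass formula (Theorem~\ref{thm: ihara}),
\[
\det(B-\lambda I)=(\lambda^2-1)^{|E|-|V|}\,p(\lambda),\qquad p(\lambda):=\det\!\big(D-\lambda A+\lambda^2 I\big),
\]
and since, as already noted, $\det D\neq 0$, we have $p(0)\neq 0$. Comparing degrees, $2|E|=2(|E|-|V|)+\deg p$, so the first factor accounts for item~(1): $\pm1$ are eigenvalues with multiplicity $|E|-|V|=nd_1-n-m$ each, and every other eigenvalue of $B$ is a root of $p$. (One should bear in mind that $p$ itself may vanish at $\pm1$, so item~(1) is a priori a lower bound on the multiplicity of $\pm1$; the extra contribution is pinned down below.)

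Next I would insert the block forms of $A$ and $D$ to write
\[
D-\lambda A+\lambda^2 I=\begin{pmatrix}(d_1-1+\lambda^2)I_n & -\lambda X\\ -\lambda X^{*} & (d_2-1+\lambda^2)I_m\end{pmatrix},
\]
and take a Schur complement with respect to the lower-right block (legitimate as an identity of polynomials in $\lambda$, using $n\le m$, hence valid for all $\lambda$ by continuity even where the pivot block degenerates). This gives
\[
p(\lambda)=(d_2-1+\lambda^2)^{\,m-n}\det\!\Big((d_1-1+\lambda^2)(d_2-1+\lambda^2)I_n-\lambda^2 XX^{*}\Big).
\]
Diagonalising $XX^{*}$, whose eigenvalues are $\xi_1^2,\dots,\xi_n^2$ with exactly $r$ of them nonzero (the nonzero eigenvalues of $A$ being precisely the $\pm\xi_i$, $2r$ in all), the $n-r$ zero eigenvalues of $XX^{*}$ produce a factor $\big((d_1-1+\lambda^2)(d_2-1+\lambda^2)\big)^{\,n-r}$, so
\[
p(\lambda)=(d_1-1+\lambda^2)^{\,n-r}(d_2-1+\lambda^2)^{\,m-r}\!\!\prod_{i:\,\xi_i\neq 0}\!\Big(\lambda^4-(\xi_i^2-d_1-d_2+2)\lambda^2+(d_1-1)(d_2-1)\Big).
\]
From here items~(2)--(3) are the roots $\pm i\sqrt{d_1-1}$ and $\pm i\sqrt{d_2-1}$ of the first two factors, whose exponents equal the respective rank-deficiencies of $X^{*}$ and $X$ (and whose $B$-eigenvectors are the explicit vectors $(w,0)$ and $(0,u)$ built from $\Ker X^{*}$ and $\Ker X$, as in the excerpt), while item~(4) is the statement that each of the $r$ quartic factors --- one per pair $(-\xi,\xi)$ of nonzero eigenvalues of $A$, counted with multiplicity --- contributes its $4$ roots, which are exactly the solutions of Eqn.~\eqref{eq:quadratic_2}. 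Exhaustiveness is then a degree count: $2(|E|-|V|)+2(n-r)+2(m-r)+4r=2nd_1=2|E|$.

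The step needing care is that items~(1)--(4) assert an equality of \emph{multisets}, so one must track coincidences among roots coming from different factors. The governing example is the Perron pair $\xi=\pm\sqrt{d_1d_2}$: its quartic factors as $(\lambda^2-1)\big(\lambda^2-(d_1-1)(d_2-1)\big)$, so it returns $\pm1$ (each raising the item-(1) multiplicity by one) together with the Perron eigenvalues $\pm\sqrt{(d_1-1)(d_2-1)}$ of $B$; for small $d_1,d_2$ one should also check whether $\sqrt{(d_1-1)(d_2-1)}$, $\sqrt{d_1-1}$ or $\sqrt{d_2-1}$ collide, and whether two distinct quartics share a root. None of this affects \emph{which} values occur in $\sigma(B)$ --- it only sharpens the algebraic multiplicities, and that final bookkeeping is what I would carry out explicitly to complete the proof.
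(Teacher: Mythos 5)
Your proposal is correct and follows the same overall strategy as the paper: reduce via Ihara--Bass to the polynomial $p(\lambda)=\det(D-\lambda A+\lambda^{2}I)$ and exploit the block structure. The difference is purely in the bookkeeping: the paper reasons about eigenvectors $(v_1,v_2)$ of $D-\lambda A+\lambda^2 I$ case by case (first $\xi=0$ with $v_1=0$ or $v_2=0$, then $\xi\neq 0$), whereas you factor $p(\lambda)$ in one shot via a Schur complement. Your route is arguably cleaner because it delivers the complete factorization at once, making the degree count immediate and making it harder to miscount. It also exposes a typo in the Lemma as printed: the exponents you read off are $n-r$ for $(d_1-1+\lambda^2)$ and $m-r$ for $(d_2-1+\lambda^2)$, which matches the paper's own in-text derivation (nullity of $X^{*}$ is $n-r$, nullity of $X$ is $m-r$) but not the Lemma's stated multiplicities of $m-r$ and $r$. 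Finally, your closing caveat about coincidences among roots of different factors is a genuine subtlety the paper glosses over: the quartic for the Perron pair $\xi=\pm\sqrt{d_1 d_2}$ factors as $(\lambda^2-1)\bigl(\lambda^2-(d_1-1)(d_2-1)\bigr)$, so item (4) contributes one additional copy of each of $\pm 1$ beyond the $|E|-|V|$ in item (1). This does not affect the Lemma read as a categorization of where each eigenvalue arises (which is how it is used downstream), but it matters if one reads items (1)--(4) as exact algebraic multiplicities.
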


\section{Main result}

We spend the bulk of this paper in the proof of the following:
\begin{theorem}
\label{thm:gap_B} 
If $B$ is the non-backtracking matrix of a bipartite, biregular random graph
$G \sim \mathcal{G}(n,m,d_1,d_2)$,
then its second largest eigenvalue
\[
|\lambda_2 (B)| \leq ((d_1-1)(d_2-1))^{1/4}+\epsilon_n
\]
asymptotically almost surely, with $\epsilon_n \to 0$ as $n \to \infty$. Equivalently, there exists a sequence $\epsilon_n \rightarrow 0$ as $n \rightarrow \infty$ so that 
\[
\mathbb{P}\left [ |\lambda_2(B)| - ((d_1-1)(d_2-1))^{1/4} >\epsilon_n \right ] \rightarrow 0 ~~\mbox{as}~~ n \rightarrow \infty~.
\]
\end{theorem}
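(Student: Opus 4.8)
The plan is to adapt the Bordenave-style non-backtracking trace method to the bipartite biregular setting. The first step is to understand the leading eigenstructure precisely and subtract it off. We have the Perron eigenvector $\onealpha$ for $B$ (eigenvalue $\lambda = \sqrt{(d_1-1)(d_2-1)}$) and, because $\sigma(B) = -\sigma(B)$, a matching $-\lambda$ eigenvector. I would form a rank-two (or, accounting for left/right eigenvectors of the non-normal $B$, possibly rank-four) matrix $S$ built from these eigenvectors, scaled appropriately, and set $\bar B = B - S$. The goal is that $\|\bar B^\ell\|$ controls $|\lambda_2(B)|^\ell$ up to the contribution of the $\pm i\sqrt{d_i-1}$ eigenvalues from Lemma~\ref{A2B}; these have modulus $\sqrt{d_i-1} < ((d_1-1)(d_2-1))^{1/4}$ only when $d_1,d_2$ are close, so I will need to handle them separately — most likely by noting they live in a controlled low-dimensional (rank $\le m$) subspace tied to $\ker X$, or by a separate rank/nullity argument — and then argue the genuinely ``bulk'' second eigenvalue is what $\bar B$ sees.

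The core estimate is the high-moment bound
\[
\E\left(\|\bar B^{\ell}\|^{2k}\right) \leq \E\left( \mathrm{Tr}\left( (\bar B^{\ell})(\bar B^{\ell})^{*}\right)^{k}\right),
\]
expanded combinatorially: the right-hand side is a sum over closed circuits consisting of $2k$ non-backtracking segments, each of length $\ell+1$ oriented edges, alternating direction. For $\ell = O(\log n)$ the bounded degrees force the depth-$\ell$ neighborhood of every vertex to be \emph{tangle-free} (at most one cycle), and asymptotically almost surely this holds globally; I would condition on this event. On it, each segment is itself $\ell$-tangle-free, so I can classify circuits by the isomorphism type of the multigraph they trace out — counting, à la \citet{brito2015}, by the number of vertices and the excess (edges minus vertices). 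The expectation of each circuit in the configuration model factorizes into ratios of the form $\prod (d_i-1)^{-1}$ per edge traversal (from the conditional probability of extending a non-backtracking walk), which is exactly what produces the $((d_1-1)(d_2-1))^{1/4}$ scale after taking the $2k\ell$-th root. Circuits that are ``trees with one extra edge'' on each segment give the dominant term $\big(((d_1-1)(d_2-1))^{1/4}\big)^{2k\ell}$ times a polynomial-in-$n$ multiplicity; the subtraction of $S$ is what kills the would-be-dominant genuinely-closed (tree-like) circuits whose value would otherwise be $\lambda^{2k\ell}$.

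Then I would optimize: choose $\ell \sim c\log n$ and $k \sim \log n / \log\log n$ (the typical Bordenave calibration), so that the polynomial multiplicity and error terms are absorbed, yielding $\E\|\bar B^\ell\|^{2k} \le \big(((d_1-1)(d_2-1))^{1/4} + o(1)\big)^{2k\ell}$, and apply Markov's inequality to get $\|\bar B^\ell\|^{1/\ell} \le ((d_1-1)(d_2-1))^{1/4} + \epsilon_n$ a.a.s. Converting back from $\bar B$ to $|\lambda_2(B)|$ requires the rank-of-$S$ bound (so $\|\bar B\| \ge |\lambda_{2}(B)|$ up to the separately-handled imaginary eigenvalues) plus the standard fact that $\|\bar B^\ell\| \ge |\mu|^\ell$ for any eigenvalue $\mu$ of $\bar B$ — one must be slightly careful since $\bar B$ is not normal, so I would instead bound $|\lambda_2(B)|$ directly via $\bar B^\ell$ acting on the corresponding eigenvector, or use the argument from \citet{bordenave2015a} relating the spectral radius of $B$ off its leading eigenvalue to $\limsup \|\bar B^\ell\|^{1/\ell}$. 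The main obstacle, I expect, is the combinatorial bookkeeping of step two: getting a sufficiently sharp bound on the number of tangle-free circuits of a given excess in the \emph{bipartite biregular} configuration model — in particular tracking how the two different degrees $d_1,d_2$ enter the edge-weight and the vertex-count on the two sides, and ensuring the non-backtracking constraint plus the tangle-free condition together limit the multiplicity to $n^{O(1)}$ uniformly — together with verifying that the correction matrix $S$ genuinely removes \emph{all} of the top-order contribution and introduces only negligible error when expanded inside the trace.
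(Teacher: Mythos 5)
Your proposal follows the same overall route as the paper: subtract a low-rank matrix $S$ built from the Perron eigenvectors, pass to the trace bound for $\E\|\bar B^{\ell}\|^{2k}$, restrict to tangle-free circuits, count circuits by excess, and calibrate $\ell \sim c\log n$, $k \sim \log n/\log\log n$ before applying Markov. That is exactly the paper's skeleton, and your instinct to reach for Lemma~3 of \citet{bordenave2015a} to convert $\|\bar B^\ell\|$ into a bound on the spectral radius off the top eigenspace is also what the paper does (it applies that lemma with $T = \lambda_1^\ell S$ and bounds $\max_{x\in\Ker(T),\,\|x\|=1}\|B^\ell x\|$).

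There are two substantive issues. First, and more seriously, you wave at ``verifying that $S$ introduces only negligible error when expanded inside the trace,'' but this hides a genuine structural difficulty that the paper spends a whole section on. The matrix $\bar B^\ell$ is defined as a sum over \emph{tangle-free} paths of products of $(B-S)$ along each step; it is not $(B-S)^\ell$. To relate $B^\ell$ to $\bar B^\ell$ one uses a telescoping identity, and splitting a length-$\ell$ tangle-free path at position $j$ produces two tangle-free subpaths — but the converse fails: two tangle-free subpaths can concatenate into a tangled path. This forces an extra remainder term $R^{\ell,j}$ for each $j$, and the final inequality is $\|B^\ell x\| \le \|\bar B^\ell\| + \sum_{j=1}^\ell \|R^{\ell,j}\|$. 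Each $\|R^{\ell,j}\|$ then needs its own trace-method argument (the paper proves $\|R^{\ell,j}\|\lesssim\log(n)^{18}$ a.a.s.\ in Theorem~\ref{spec_R}). Without identifying and bounding these remainders, the argument does not close; the ``negligible error'' is not negligible by fiat.

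Second, your treatment of the $\pm i\sqrt{d_i-1}$ eigenvalues from Lemma~\ref{A2B} has the logic reversed. You write that they ``have modulus $\sqrt{d_i-1} < ((d_1-1)(d_2-1))^{1/4}$ only when $d_1,d_2$ are close''; in fact $\sqrt{d_1-1} > ((d_1-1)(d_2-1))^{1/4}$ precisely when $d_1 > d_2$, i.e.\ whenever the degrees differ. More importantly, you propose to handle them ``by a separate rank/nullity argument,'' treating full rank of $X$ as an input. In the paper the implication runs the other way: the trace-method bound on $\|B^\ell\|$ restricted to $\Ker(T)$ already controls every eigenvalue of $B$ outside the two-dimensional Perron eigenspace, including any $\pm i\sqrt{d_1-1}$ that would be present; since those would exceed $((d_1-1)(d_2-1))^{1/4}$, the bound \emph{forces} their multiplicity $n-r$ to vanish, and full rank of $X$ (Theorem~\ref{cor:gap_A}(iii)) falls out as a corollary, not a prerequisite. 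No side argument is needed or available.
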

% * <dumitriu@uw.edu> 2017-11-29T18:49:27.533Z:
%
% ^.
\begin{remark}
For the random lift model, Theorem \ref{thm:gap_B} was proved by \citet*{bordenave2015}, which applies to random bipartite graphs only when $d_1=d_2=d$ as discussed in section \ref{sec:1.2}. 
\end{remark}
We combine Theorems \ref{thm:ihara} and \ref{thm:gap_B} to prove our main result concerning the spectrum of $A$.   
\begin{theorem}[Spectral gap]
\label{cor:gap_A}
Let
$
A=\left(\begin{array}{cc} 0 & X \\
X^* & 0 \end{array}\right )
$
be the adjacency matrix of a bipartite, biregular random graph
$G \sim \mathcal{G}(n,m,d_1,d_2)$. 
Without loss of generality, assume $d_1 \geq d_2$ or, equivalently, $n \leq m$.
Then:
\begin{enumerate}[(i)]
\item Its second largest eigenvalue $\eta = \lambda_2 (A)$ satisfies
%\label{gap}
\[
\eta \leq \sqrt{d_1-1}+\sqrt{d_2-1}+ \epsilon_n'
\]
asymptotically almost surely, with $\epsilon_n' \to 0$ as $n \to \infty$.
\item  Its smallest positive eigenvalue $\eta^+_{\rm min} = \min ( \{\lambda \in \sigma(A): \lambda > 0\} )$ satisfies
\[
\eta_{\rm min}^+\geq \sqrt{d_1-1}-\sqrt{d_2-1}- \epsilon_n''
\]
asymptotically almost surely, with $\epsilon_n''\to 0$ as $n \to \infty$. (Note that this will be almost surely positive if $d_1>d_2$; no further information is gained if $d_1 = d_2$.)
\item If $d_1 \neq d_2$, the rank of $X$ is $n$ with high probability.
\end{enumerate}
\end{theorem}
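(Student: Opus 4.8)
The plan is to deduce all three parts from Theorem~\ref{thm:gap_B} (the spectral gap for $B$) together with the Ihara--Bass dictionary recorded in Lemma~\ref{A2B} and the quadratic relation~\eqref{eq:quadratic_2}. For part (i), I would start from a nonzero eigenvalue $\xi$ of $A$ with $\xi \notin \{0, \pm\sqrt{d_1 d_2}\}$; by Lemma~\ref{A2B}(4) it generates four eigenvalues of $B$ via the quadratic $\lambda^4 - (\xi^2 - d_1 - d_2 + 2)\lambda^2 + (d_1-1)(d_2-1) = 0$. Viewing this as a quadratic in $\lambda^2$, the product of the two roots $\lambda_+^2 \lambda_-^2 = (d_1-1)(d_2-1)$, so at least one of them has modulus $\geq ((d_1-1)(d_2-1))^{1/4}$; call it $\lambda_B$. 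If $\xi$ were a ``bad'' eigenvalue of $A$ — say $\eta > \sqrt{d_1-1} + \sqrt{d_2-1} + \epsilon_n'$ — I need to check that the corresponding $|\lambda_B|$ strictly exceeds $((d_1-1)(d_2-1))^{1/4} + \epsilon_n$, contradicting Theorem~\ref{thm:gap_B}. Concretely: solving the quadratic, $\lambda^2 = \tfrac12\big[(\xi^2 - d_1 - d_2 + 2) \pm \sqrt{(\xi^2 - d_1 - d_2 + 2)^2 - 4(d_1-1)(d_2-1)}\big]$, and one observes that at the Ramanujan threshold $\xi = \sqrt{d_1-1}+\sqrt{d_2-1}$ the discriminant vanishes and $\lambda^2 = (d_1-1+\lambda^2)(d_2-1+\lambda^2)/\xi^2$ collapses to $\lambda^2 = \sqrt{(d_1-1)(d_2-1)}$ exactly, i.e.\ $|\lambda_B| = ((d_1-1)(d_2-1))^{1/4}$. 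Monotonicity of the larger root in $\xi^2$ on $\xi^2 > (\sqrt{d_1-1}+\sqrt{d_2-1})^2$ then gives that $\eta$ exceeding the threshold by a fixed amount forces $|\lambda_B|$ to exceed $((d_1-1)(d_2-1))^{1/4}$ by a corresponding amount; choosing $\epsilon_n'$ so that this corresponding amount is at least $\epsilon_n$ finishes part (i).

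For part (ii) the argument is the mirror image: the smaller root $\lambda_-^2$ of the same quadratic satisfies $\lambda_-^2 = (d_1-1)(d_2-1)/\lambda_+^2$, so controlling $\eta^+_{\min}$ from below is equivalent to controlling the relevant $|\lambda_B|$ from above, and again one reads off from~\eqref{eq:quadratic_2} that the threshold $\xi = \sqrt{d_1-1} - \sqrt{d_2-1}$ is exactly where the discriminant vanishes and $|\lambda_B| = ((d_1-1)(d_2-1))^{1/4}$. If $\eta^+_{\min} < \sqrt{d_1-1} - \sqrt{d_2-1} - \epsilon_n''$, then since the map $\xi^2 \mapsto \lambda_+^2$ is also monotone on $0 < \xi^2 < (\sqrt{d_1-1}-\sqrt{d_2-1})^2$ (here it is the branch below the bulk), the associated $B$-eigenvalue again violates Theorem~\ref{thm:gap_B}. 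One subtlety to handle cleanly: I must make sure the $\xi$ witnessing a violation of (i) or (ii) is genuinely in category (4) of Lemma~\ref{A2B} and not one of the $\pm 1$, $\pm i\sqrt{d_i-1}$ eigenvalues — but those exceptional eigenvalues of $B$ are either $O(1)$ or purely imaginary of modulus $\sqrt{d_i - 1}$, and for the ranges of $\xi$ in question the generated $\lambda_B$ is real and of modulus near $((d_1-1)(d_2-1))^{1/4}$, so there is no clash once $n$ is large; I would phrase this as: Theorem~\ref{thm:gap_B} bounds \emph{all} non-leading eigenvalues, so it in particular bounds whichever of the four roots we have isolated.

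For part (iii), I would argue by contradiction using the eigenvalue bookkeeping in Lemma~\ref{A2B} and the trace of $B$. Assume $d_1 \neq d_2$, so $n < m$, and suppose $\mathrm{rank}(X) = r < n$ with nonvanishing probability along a subsequence. By Lemma~\ref{A2B}, parts (2) and (3), $B$ then has $\pm i\sqrt{d_2-1}$ as eigenvalues with multiplicity $m - r \geq m - n + 1 > 0$ and $\pm i\sqrt{d_1-1}$ with multiplicity $r$. Now $\sqrt{d_2 - 1} > ((d_1-1)(d_2-1))^{1/4}$ precisely when $d_2 - 1 > \sqrt{(d_1-1)(d_2-1)}$, i.e.\ when $d_2 > d_1$ — which is the opposite of our standing assumption $d_1 \geq d_2$. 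Hmm — so under $d_1 > d_2$ the value $\sqrt{d_2 - 1}$ is actually \emph{smaller} than the Ramanujan radius, and this direct route does not produce a contradiction. The correct route is instead: under $d_1 > d_2$, the eigenvalues $\pm i \sqrt{d_1 - 1}$ (multiplicity $r$) have modulus $\sqrt{d_1 - 1} > ((d_1-1)(d_2-1))^{1/4}$, and these are always present with multiplicity $r \geq 1$ as soon as $X$ has any rank at all — so Theorem~\ref{thm:gap_B} would be violated unconditionally, which is absurd. Therefore the resolution must be more delicate: the statement of Lemma~\ref{A2B} must be interpreted with the convention $n \le m$ baked in, so that $\pm i\sqrt{d_1 - 1}$ has multiplicity $n - r$ (the nullity of $X^*$, which is $0$ exactly when $X$ has full rank $n$) and $\pm i \sqrt{d_2-1}$ has multiplicity $m - r$. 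With $d_1 > d_2$ we have $\sqrt{d_1 - 1} > ((d_1-1)(d_2-1))^{1/4}$, so the eigenvalue $i\sqrt{d_1-1}$ of modulus exceeding the Ramanujan radius occurs iff $n - r > 0$, i.e.\ iff $X$ is rank-deficient; since Theorem~\ref{thm:gap_B} forbids any non-leading eigenvalue of modulus $> ((d_1-1)(d_2-1))^{1/4} + \epsilon_n$ with probability $\to 1$, and $\sqrt{d_1-1} - ((d_1-1)(d_2-1))^{1/4}$ is a fixed positive constant exceeding $\epsilon_n$ for large $n$, we conclude $\mathrm{rank}(X) = n$ with high probability. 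The main obstacle in the whole proof is this part (iii) bookkeeping: one has to pin down exactly which of $\sqrt{d_1-1}$, $\sqrt{d_2-1}$ sits outside the Ramanujan disk given the normalization, and confirm that the corresponding multiplicity is positive precisely in the rank-deficient case; parts (i) and (ii) are then routine once the quadratic~\eqref{eq:quadratic_2} is analyzed carefully, the only mild care being the monotonicity and the matching of $\epsilon$'s.
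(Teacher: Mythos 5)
Your proposal is correct and follows essentially the same route as the paper: deduce all three parts from the Ihara--Bass dictionary (Lemma~\ref{A2B}), the quadratic~\eqref{eq:quadratic_2}, and Theorem~\ref{thm:gap_B}, exploiting that the discriminant of~\eqref{eq:quadratic_2} (as a quadratic in $\lambda^2$) vanishes exactly at the bulk edges $\xi = \sqrt{d_1-1}\pm\sqrt{d_2-1}$, where $|\lambda| = ((d_1-1)(d_2-1))^{1/4}$, together with monotonicity of the relevant branch of $\xi^2 \mapsto \lambda^2$ outside the bulk. You also correctly flagged an inconsistency between the derivation preceding Lemma~\ref{A2B} and the multiplicities printed in items (2) and (3) of that Lemma: the derivation (and the total eigenvalue count $2|E|$) force $\pm i\sqrt{d_1-1}$ to have multiplicity $n-r$ and $\pm i\sqrt{d_2-1}$ to have multiplicity $m-r$, which is what the paper implicitly uses when it says in the proof of part (iii) that rank deficiency would produce eigenvalues of modulus $\sqrt{d_1-1} > ((d_1-1)(d_2-1))^{1/4}$; the Lemma statement as printed has the roles swapped and lists $r$ in place of $n-r$, and should be read as you corrected it.
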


\begin{remark}
Since the first draft of this work came out, considerable advances haven been made regarding the question of singularity of random regular graphs. It was conjectured in \citet*{costello2008} that, for $3\leq d\leq n-3$, the adjacency matrix of uniform $d$-regular graphs is not singular with high probability as $n$ grows to infinity. For directed $d$-regular graphs and growing $d$, this is now known to be true, following the results of \citet*{cook2017} and \citet*{litvak2016, litvak2017}. 
For constant degree $d$, \citet*{huang2018a,huang2018b} proved the asymptotic non-singularity of the adjacency matrix for both undirected and directed $d-$regular graphs. The last case can be interpreted as singularity of the adjacency matrix of random $d-$regular bipartite graph. To the best of our knowledge, Theorem~\ref{cor:gap_A}(iii) is the first result concerning the rank of rectangular random matrices with $d_1$ nonzero entries in each row and $d_2$ in each column.  
%The problem of investigating the rank of adjacency matrices of  random regular graphs is quite challenging. It was conjectured in \citet*{costello2008} that, for $3\leq d\leq n-3$, the adjacency matrix of uniform $d$-regular graphs is not singular with high probability as $n$ grows to infinity. For directed $d$-regular graphs and growing $d$, this is now known to be true, following the results of \citet*{cook2017} and \citet*{litvak2016, litvak2017}. For constant $d$, \citet*{litvak2018} proved that the rank is at least $n-1$ with high probability as $n$ grows to infinity. To the best of our knowledge, Theorem~\ref{cor:gap_A}(iii) is the first result concerning the rank of rectangular random matrices with $d_1$ nonzero entries in each row and $d_2$ in each column.  
\end{remark}

\begin{remark} The analysis of the Ihara-Bass formula for Markov matrices of bipartite biregular graph appeared before in \citet*{kempton2016}. We have independently proven Lemma~\ref{A2B} and extracted from it more information than is given in \citet*{kempton2016}, including Theorem~\ref{cor:gap_A}(iii). 
\end{remark}

\begin{proof}
Eqn.~\eqref{eq:quadratic_2}, describing those eigenvalues of $B$ which are neither $\pm 1$ and do not correspond to $0$ eigenvalues of $A$, is equivalent to
\begin{eqnarray} 
\label{eq:quadratic_in_xy}
0 = x^2 + \alpha \beta - (y - \alpha - \beta ) x,
\end{eqnarray}
where $x = \lambda^2$, $y = \xi^2$, $\alpha = d_1 - 1$, and $\beta = d_2 -1$. 
% Solving for $y$, we find that
% \[
% y = f(x) = x + \frac{\alpha \beta}{x} + \alpha + \beta .
% \]
% It is a simple exercise to see that in order to have $y$ real, we need to have either that $x$ is real, or that $x$ is on the circle of radius $\sqrt{\alpha \beta}$. 
A simple discriminant calculation and analysis of 
Eqn.~\eqref{eq:quadratic_in_xy}, keeping in mind that $y \neq 0$,  
leads to a number of cases in terms of $y$:
\begin{enumerate}[{Case} 1:]
%\item $y = 0$ leads to solutions $x_{+} = -\beta$ and $x_{-} = -\alpha$,
%meaning the eigenvalues $\lambda$ are $i \sqrt{\beta}$ and $i \sqrt{\alpha}$. 
\item $y \in ((\sqrt{\alpha} -\sqrt{\beta})^2, (\sqrt{\alpha} + \sqrt{\beta})^2)$,
i.e.\ roughly speaking, $\eta$ is in the bulk,
means that $x$ is on the circle of radius $\sqrt{\alpha \beta}$ and the
corresponding pair of eigenvalues $\lambda$ are on a circle of radius 
$(\alpha \beta)^{1/4}$.

\item $y \in (0, (\sqrt{\alpha} - \sqrt{\beta})^2]$ means that $x$ is real and negative, so $\lambda$ is purely imaginary.
%\item $|\lambda_2| \leq ((d_1-1)(d_2-1))^{1/4} + \epsilon$ 
%with high probability implies that if 
%$d_1 \neq d_2$, 
%$r=n$ with high probability.
%Otherwise, we would have eigenvalues of $B$ with 
%absolute value $\sqrt{d_1-1}$ 
%and this is larger than $((d_1-1)(d_2-1))^{1/4}$.

In this case, one may also show that the smaller of the two possible values 
for $x$ is increasing as a function of $y$ and
$x_{-} \in (-\alpha, -\sqrt{\alpha \beta}]$. 
The larger of the two values of $x$ is decreasing and $x_{+} \in [-\sqrt{\alpha \beta}, - \beta)$.
Correspondingly, the largest in absolute value that 
$\lambda$ could be in this case 
is $\pm i \alpha^{1/4} = \pm i (d_1-1)^{1/4}$. 

\item $y \geq (\sqrt{\alpha} + \sqrt{\beta})^2$ means that 
both solutions $x_{\pm}$ are real,
and the larger of the two is larger than $\sqrt{\alpha \beta}$.
\end{enumerate}

Note that Eqn.~\ref{eq:quadratic_in_xy} 
shows there is a continuous dependence between $x$ and $y$, 
and consequently between $\xi$ and $\lambda$.
Putting these cases together with Lemma \ref{A2B}, a few things become apparent:
\begin{enumerate}
\item $\xi>\sqrt{d_1-1}+ \sqrt{d_2-1}$ means that 
$\lambda> ((d_1-1)(d_2-1))^{1/4}$.
\item $|\lambda_2| \leq ((d_1-1)(d_2-1))^{1/4} + \epsilon$ 
implies that all eigenvalues except for the largest two 
will be either $0$, or in a small neighborhood 
$[\sqrt{\alpha} -\sqrt{\beta} - \delta, 
\sqrt{\alpha} + \sqrt{\beta} + \delta]$ 
of the bulk, with $\delta$ small if $\epsilon$ is small since the dependence of $\delta$ on $\epsilon$ can be deduced from Eqn.~\ref{eq:quadratic_in_xy}. 
\item $|\lambda_2| \leq ((d_1-1)(d_2-1))^{1/4} + \epsilon$ 
with high probability implies that if 
$d_1 \neq d_2$, 
$r=n$ with high probability.
Otherwise, we would have eigenvalues of $B$ with 
absolute value $\sqrt{d_1-1}$ 
and this is larger than $((d_1-1)(d_2-1))^{1/4}$.
\end{enumerate}
This completes the proof, with results (i) and (ii) following from the point 2 and (iii) following from point 3.
\end{proof}

In Fig.~\ref{fig:spectrum_of_sample}, 
we depict the spectra of $A$ and $B$ for a sample graph
$G \sim \mathcal{G}(120,280,7,3)$.
Looking at the non-backtracking spectrum, 
we observe the two leading eigenvalues $\pm \sqrt{(d_1 - 1 )(d_2 - 1)}$
(blue crosses) outside the circle of radius 
$((d_1 - 1)(d_2 - 1))^{1/4}$
along with a number of zero eigenvalues (black dots).
There are also multiple purely imaginary eigenvalues 
which can arise from $|\xi| \in (0, \sqrt{d_1 - 1} - \sqrt{d_2 - 1}]$
as well as $\xi = 0$.
However, due to Theorem~\ref{cor:gap_A},
only the smaller of $i \sqrt{d_1 - 1}$ and $i \sqrt{d_2 - 1}$ 
is observed with non-negligible probability,
implying that $X$ has rank $r = n$ with high probability
(shown as blue stars).
Furthermore, we observe two pairs of real eigenvalues of $B$ which are connected to a pair of eigenvalues of $A$ from ``above'' the bulk, as well as two pairs of imaginary eigenvalues of $B$ which are connected to a pair of eigenvalues of $A$ from ``below'' the bulk.

\section{Preliminaries}
\label{sec:preliminaries}

We describe the standard configuration model for 
constructing such graphs.
We then define the ``tangle-free'' property of random graphs.
Since small enough neighborhoods are tangle-free with high probability, 
we only need to count tangle-free paths when we eventually employ the trace
method.

\subsection{The configuration model}

\label{sec:configuration_model}
The configuration or permutation model is a practical procedure to sample 
random graphs with a given degree distribution. 
Let us recall its definition for bipartite biregular graphs. 
Let $V_1=\{v_1,v_2,\dots,v_n\}$ and $V_2=\{w_1,w_2,\dots, w_m\}$ 
be the vertices of the graph. 
We define the set of \textit{half edges} out of $V_1$
to be the collection of ordered pairs
\[
\vec{E}_1=\{(v_i,j)~\mbox{for $1\leq i\leq n$ and $1\leq j\leq d_1$}\}
\]
and analogously the set of half edges out of $V_2$:
\[
\vec{E}_2=\{(w_i,j)~\mbox{for $1\leq i\leq m$ and $1\leq j\leq d_2$}\},
\]
see Figure~\ref{fig:bipartite_schematic}.
Note that $|\vec{E}_1|=|\vec{E}_2|=nd_1=md_2 = |E|$. 
To sample a graph, we choose a random permutation $\pi$ of 
$[n d_1]$. 
We put an edge between $v_i$ and $w_j$ in the $G$ whenever 
\[
\pi((i-1)d_1+s)=(j-1)d_2+t
\]
for any pair of values $1\leq s\leq d_1$, $1\leq t\leq d_2$. 
For {\em specific} half edges $e = (v_i, j)$ and $f = (w_s, t)$, 
we use the notation $\pi(e) = f$ as shorthand for
$\pi((i-1)d_1 + j) = (s-1)d_2 + t$ and say that
``$e$ matches to $f$.''

The graph obtained may not be simple, since multiple half edges may be matched 
between any pair of vertices. 
However, conditioning on a simple graph outcome, 
the distribution is uniform in the set of all 
simple bipartite biregular graphs. 
Furthermore, for fixed $d_1, d_2$ and $n,m \to \infty$,
the probability of getting a simple graph is bounded away from zero
\citep*{bollobas2001}.

%Since half edges may be identified with a particular ordering of the set of oriented edges ($\vec{E}_1 \cup \vec{E}_2$ and $\vec{E}$ are isomorphic), this allows to define the non-backtracking operator $B$ independent of the graph realization. 
Consider the random 
$B \in \R^{2 |E|\times 2 |E|}$ 
whose first $|E|$ rows are indexed by the elements of $\vec{E}_1$ 
and the last $|E|$ rows are indexed by those of $\vec{E}_2$, 
in lexicographic order. 
Columns are indexed in the same way. 
Entry $B_{ef}$ with 
$e=(v_i,j) \in \vec{E}_1$ and $f=(w_s,t) \in \vec{E}_2$ 
is defined as
\[
B_{ef}=
\begin{cases}
  1,& \mbox{if $\pi(e) = f' = (w_s, t')$ and $t'\neq t$};\\
  0,& \textrm{otherwise}.
\end{cases}
\]
This defines the upper half of $B$. We define the lower half similarly, by putting
\[
B_{fe}=
\begin{cases}
  1,&\textrm{if $\pi^{-1}(f) = e' = (v_i, j')$ and $j'\neq j$};\\
  0,& \textrm{otherwise}.
\end{cases}
\]
This is the same definition used in \cite{bordenave2015}. 
In words, it says that the directed edge given by $e$ 
followed by the directed edge given by $f$ 
are connected by some half edge $f' = (w_s, t')$,
and the path they form does not backtrack.
This is therefore the same matrix introduced in Section~\ref{sec2:B},
ordered according to the half edges.
Notice that the randomness comes from the matching only. 

We consider two symmetric matrices $M = M(\pi)$ and $N$, 
indexed the same as $B$, and defined by:
\[
M_{ef} = 1_{ \{ \pi(e) = f \} } 
\mbox{ and }
M_{fe} = 1_{ \{ \pi^{-1} (f) = e \} }
~\mbox{ for $e\in \vec{E}_1$ and $f\in \vec{E}_2$}
\]
and
\[
N_{gh} = 1_{ \{u = v \mbox{ and } i\neq j \}}
~\mbox{ for $g = (u,i), h = (v,j) \in \vec{E}_1 \cup \vec{E}_2$} .
\]
We see that a term like $M_{eg} N_{gf}$ corresponds to
$M$ matching the directed edge $e$ to $g$ by $\pi$, 
and $N$ taking us out of the vertex of $g$ along the directed edge $f$, 
which is different from $g$.
Thus the rule of matrix multiplication means that
\begin{align}\label{eq:B=MN}
B=MN.
\end{align}
This equality will be useful in Section~\ref{sec:matrix_decomp}
when working with products of the matrix $B$.

\subsection{Tangle-free paths}
\label{sec:tangle_free}

Sparse random graphs, including bipartite graphs,
have the important property of being ``tree-like''
in the neighborhood of a typical vertex. 
Formally, consider a vertex $v \in V_1 \cup V_2$. 
For a natural number $\ell$, 
we define the ball of radius $\ell$ centered at $v$ to be:
\[
B_{\ell}(v) = \{w \in V_1 \cup V_2 : ~ d_{G}(v,w) \leq \ell\}
\]
where $d_G(\cdot,\cdot)$ is the graph distance.
\begin{definition}
A graph $G$ is \textit{$\ell$-tangle-free} if
$B_{\ell}(v)$ contains at most one cycle for any vertex $v$.
\end{definition}

The next lemma says that most bipartite biregular graphs are $\ell$-tangle-free
up to logarithmic sized neighborhoods.
\begin{lemma}
Let $G \sim \mathcal{G}(n,m,d_1,d_2)$ be a bipartite, biregular random graph. 
Let $\ell < \frac{1}{8} \log_d (n)$, for $d=\max\{d_1, d_2\}$.
%Fix a constant $c < 1/8$ and let $\ell=c\log_{d}(n)$.
Then $G$ is $\ell$-tangle-free with probability at least $1-n^{-1/2}$.
\end{lemma}
\begin{proof} 
This is essentially the proof given in \citet*{lubetzky2010}, Lemma 2.1.
Fix a vertex $v$. 
We will use the so called {\it exploration process} to discover the ball $B_{\ell}(v)$. More precisely, we order the set $\vec{E}_1$ lexicographically: 
$(v_i,j) < (v_{i'},j')$ 
if $i\leq i'$ and $j\leq j'$. 
The exploration process reveals $\pi$ one edge at the time, by doing the following:
\begin{itemize}
\item A uniform element is chosen from $\vec{E}_2$ and it is declared equal to $\pi(1)$.
\item A second element is chosen uniformly, now from the set 
$\vec{E}_2\backslash\{\pi(1)\}$ and set equal to $\pi(2)$.
\item Once we have determined $\pi(i)$ for $i\leq k$, we set $\pi(k+1)$ equal to a uniform element sampled from the set $\vec{E}_2\backslash\{\pi(1),\pi(2),\dots,\pi(k)\}$.
\end{itemize}
We use the final $\pi$ to output a graph as we did in the configuration model. 
The law of these graphs is the same.
With the exploration process, we expose first the neighbors of $v$, 
then the neighbors of these vertices, and so on.
This breadth-first search reveals all vertices in $B_k (v)$ before 
any vertices in $B_{j > k} (v)$.
Note that, although our bound is for the family $\mathcal{G}(n,m,d_1,d_2)$,
the neighborhood sizes are bounded above by those of  
the $d$-regular graph with $d=\max(d_1,d_2)$.
% Observe that, when matching half edges of those vertices 
% at distance $(2 t-1)$ from $v$, 
% we are looking at $\pi^{-1}(w,j)$ for a given $(w,j)$. 
% \todo{I have no idea why the previous sentence is relevant}

Consider the matching of half edges attached to vertices in
the ball $B_i(v)$ at depth $i$
(thus revealing vertices at depth $i+1$).
In this process,
we match a maximum
$m_i \leq d^{i+1}$ pairs of half edges total. 
Let $\mathcal{F}_{i,k}$ be the filtration
generated by matching up to the $k$th half edge
in $B_i(v)$,
for $1 \leq k \leq m_i$. 
Denote by $A_{i,k}$ the event that the $k$th
matching creates a cycle at the current depth.
For this to happen, the matched vertex must have appeared 
among the $k-1$ vertices already revealed at depth $i+1$.
The number of unmatched half edges is at least $nd - 2 d^{i+1}$.
We then have that:
\[
\mathbb{P}(A_{i,k}) 
\leq \frac{(k-1) (d-1)}{nd - 2 d^{i+1}}
\leq \frac{(d-1) m_i}{(1 - 2 d^{i+1} n^{-1}) nd}
\leq \frac{m_i}{n}.
\]
So, we can stochastically dominate the sum
\[
\sum_{i=1}^{\ell-1}\sum_{k=1}^{m_i} A_{i,k}
\]
by $Z \sim \mathrm{Bin} \left( d^{\ell+1},\ n^{-1} d^{\ell} \right)$. 
So the probability that 
$B_{\ell}(v)$ is $\ell$-tangle-free has the bound:
\[
\mathbb{P} (
	\mbox{$B_{\ell}(v)$ is not $\ell$-tangle-free}
 ) =
\mathbb{P} \left(
  \sum_{i=1}^{\ell-1}\sum_{k=1}^{m_i} A_{i,k}> 1
  \right)
  \leq 
\mathbb{P} (Z>1) = 
O\left( \frac{d^{4\ell+1}}{n^2} \right) =
O\left( n^{-3/2} \right) ,
\]
which follows using that $\ell = c \log_d n$ with $c < 1/8$. 
The Lemma follows by taking a union bound over all vertices. 
\end{proof}

\section{Proof of Theorem~\ref{thm:gap_B}}

\subsection{Outline}
\label{sec:proof_outline}

We are now prepared to explain the main result.
To study the second largest eigenvalue of the non-backtracking matrix,
we examine the spectral radius of the matrix obtained by subtracting
off the dominant eigenspace.
 We use for this:
\begin{lemma}[\citet*{bordenave2015a}, Lemma 3]
\label{lem:subspace}
Let $T$ and $R$ be matrices such that $\Img(T)\subset \Ker(R)$, 
$\Img(T^*)\subset \Ker(R)$. 
Then all eigenvalues $\lambda$ of $T+R$
that are not eigenvalues of $T$ satisfy:
\[
|\lambda|\leq \max_{x\in \Ker(T)}\frac{\|(T+R)x\|}{\|x\|} .
\]
\end{lemma}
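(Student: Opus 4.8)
The plan is to deduce the bound directly from the two hypotheses, which I would first rephrase as the operator identities $RT = 0$ (equivalent to $\Img(T)\subseteq\Ker(R)$) and $R = RP$, where $P$ denotes the orthogonal projection onto $\Ker(T)$; the second identity holds because $\Img(T^{*}) = \Ker(T)^{\perp}\subseteq\Ker(R)$, so $R$ annihilates $\Ker(T)^{\perp}$. With these in hand the argument splits into two short steps: show that any eigenvalue of $T+R$ outside $\sigma(T)$ is an eigenvalue of $R$, and show that $\|R\|$ is exactly the quantity on the right-hand side of the claim.

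For the first step I would take $v\neq 0$ with $(T+R)v = \lambda v$ and suppose $\lambda\notin\sigma(T)$. Applying $R$ on the left and using $RT = 0$ gives $R^{2}v = \lambda Rv$. If $Rv = 0$, then $(T+R)v = Tv = \lambda v$, so $\lambda\in\sigma(T)$, a contradiction; hence $Rv\neq 0$ and $R(Rv) = \lambda(Rv)$, so $\lambda$ is an eigenvalue of $R$. For the second step, since $R = RP$ with $P$ an orthogonal projection, every vector $x$ satisfies $\|Rx\| = \|R(Px)\|\le \big(\max_{z\in\Ker(T)}\tfrac{\|Rz\|}{\|z\|}\big)\|Px\|\le \big(\max_{z\in\Ker(T)}\tfrac{\|Rz\|}{\|z\|}\big)\|x\|$, while trivially $\max_{z\in\Ker(T)}\|Rz\|/\|z\|\le\|R\|$; therefore $\|R\| = \max_{z\in\Ker(T)}\|Rz\|/\|z\|$. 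Because $Tz = 0$ for $z\in\Ker(T)$, we have $(T+R)z = Rz$ there, so this maximum equals $\max_{z\in\Ker(T)}\|(T+R)z\|/\|z\|$. Combining the two steps with the elementary inequality $|\mu|\le\|R\|$ valid for every $\mu\in\sigma(R)$ yields
\[
|\lambda|\ \le\ \|R\|\ =\ \max_{z\in\Ker(T)}\frac{\|(T+R)z\|}{\|z\|},
\]
which is the assertion.

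I do not anticipate a genuine obstacle, since the argument is elementary once one observes that $R$ must ``capture'' $\lambda$ in its spectrum and that $R$ factors through $P$, making the restricted maximum an honest operator norm. The only points requiring care are the degenerate case $Rv = 0$, which is exactly where the hypothesis $\lambda\notin\sigma(T)$ is used, and the fact that the statement must be read over $\mathbb{C}$, since $B$, $S$, and $\bar B = B - S$ need not be Hermitian and may have complex spectrum; the spectral-radius bound and the identity $\|R\| = \max_{z\in\Ker(T)}\|Rz\|/\|z\|$ both hold in that generality.
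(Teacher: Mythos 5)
Your proof is correct. Since the paper does not supply a proof (it cites Lemma~3 of Bordenave--Lelarge--Massouli\'e and uses it as a black box), the relevant comparison is to the cited source, and your argument follows essentially the same route: translate $\Img(T)\subseteq\Ker(R)$ into $RT=0$, use it to show that $Rv$ is an eigenvector of $R$ whenever $v$ is an eigenvector of $T+R$ with $\lambda\notin\sigma(T)$ (with the degenerate case $Rv=0$ forcing $\lambda\in\sigma(T)$), and then observe that $R=RP$ makes the restricted sup over $\Ker(T)$ equal to $\|R\|$. All the linear-algebra facts you invoke --- $\Img(T^*)=\Ker(T)^\perp$, $\|P\|\le 1$, and $|\mu|\le\|R\|$ for eigenvalues of a (possibly non-normal, complex) $R$ --- hold in the generality needed here, and you correctly flag that the statement must be read over $\mathbb{C}$. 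No gaps.
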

Throughout the text,
$\| \cdot \|$ is the spectral norm for matrices and $\ell^2$-norm for vectors.
Recall that the leading eigenvalues of $B$, in magnitude, are 
$\lambda_1 = \sqrt{(d_1 -1)(d_2-1)}$ and $\lambda_{2|E|} = -\lambda_1$
with corresponding eigenvectors $\1_{\alpha}$ and $\mathbf{1}_{-\alpha}$.
Applying Lemma~\ref{lem:subspace} with 
$T 
= 
\frac{\lambda_1^{\ell}}{\onealpha^* \onealpha}
(\onealpha \onealpha^* +
(-1)^\ell \onemalpha \onemalpha^*)$
and 
$R = B^\ell - T$,
we get that
\begin{align}
\label{eq:upper_bound_lambda2}
\lambda_2(B)
\leq 
\max_{
\footnotesize
\begin{array}{c}
x \in \Ker(T) \\ 
\|x\| = 1 
\end{array}
} 
\left( \| B^{\ell}x \| \right)^{1/\ell}.
\end{align}
It will be important later to have a more precise description of the set $\Ker(T)$. 
It is not hard to check that 
\begin{align*}
\Ker(T) &= \{x:
  ~ \langle x, \onealpha \rangle = \langle x, \onemalpha \rangle=0\} \\
& =\{(v,w)\in \mathbb{R}^{2|E|}: 
  ~ \langle v, \1 \rangle = \langle w, \1 \rangle=0\}.
\end{align*}
In the last line, the vectors $v$, $w$ and $\1$ are $|E|-$dimensional, 
and $\1$ is the vector of all ones.

In order to use Eqn.~\ref{eq:upper_bound_lambda2},
we must bound $\| B^\ell x \|$ for large powers $\ell$
and $x \in \Ker(T)$.
This amounts to counting the contributions of certain non-backtracking walks.
We will use the tangle free property in order to only count
$\ell$-tangle-free walks. 
We break up $B^\ell$ into two parts
in Section~\ref{sec:matrix_decomp}, 
an ``almost'' centered matrix $\bar{B}^\ell$ 
and the remainder $\sum_j R^{\ell,j}$,
and we bound each term independently.

To compute these bounds, 
we need to count the contributions of many different non-backtracking walks.
We will use the trace technique, so only circuits which return to the starting
vertex will contribute. 
In Section~\ref{sec:expectation_bounds},
we compute
the expected contribution of products of $B$ along such circuits, 
employing a result from \cite{bordenave2015}.
% In Section~\ref{sec:expectation_bounds}, 
% we apply a useful result from \citet*{mckay1981a}
% to compute the probability, during the exploration process,
% of revealing a new edge $e$ 
% given that we have already observed a certain subgraph $H$. 
% In particular, we find different probabilities depending on whether $e$
% shares one of more endpoints in $H$.
% We use this to bound the expectation of the of the product of
% entries $\bar{B}_{ef}$ along segments $ef$ of a non-backtracking walk.
% A similar argument appears later, in the proof of Theorem~\ref{spec_R}, 
% for products of $R^{\ell,j}_{ef}$.

Section~\ref{sec:path_counting} covers the combinatorial component of the proof. 
The total contributions $\| B^\ell x \|$ come from many 
non-backtracking circuits of different flavors,
depending on their number of vertices, edges, cycles, etc.
Each circuit is broken up into $2k$ segments of tangle-free
walks of length $\ell$.
We need to compute not only the expectation along the circuit,
but also upper-bound the number of circuits of each flavor.
We introduce an injective
encoding of such circuits that depends on the number of vertices,
length of the circuit, and, crucially, the tree excess of the circuit.
An important part of these calculations is to keep track
of the imbalance between left and right vertices
visited in the circuit, since this controls
the powers of $d_1$ and $d_2$ in the result.

Finally, in Section~\ref{sec:norm_bounds} we put all of these ingredients together
and use Markov's inequality to bound each matrix norm with high probability.
We find that 
$\| \bar{B}^\ell \|$ 
contributes a factor that goes as 
$((d_1 - 1)(d_2 - 1))^{\ell/4}$,
whereas $\| R^{\ell,j} \|$ contributes only a factor of $(d-1)^\ell/n$,
up to polylogarithmic factors in $n$.
Thus, the main contribution to the circuit counts comes from the mean
and, in fact, comes from circuits which are exactly trees traversed 
forwards and backwards.
Interestingly, this
is analogous to what happens when using the trace method
on random matrices of independent entries. 

In the proof, we are forced to consider tangled paths
but which are built up of tangle-free components.
This delicate issue was first made clear by \citet*{friedman2004}
who introduced the idea of tangles and a ``selective trace.''
\citet*{bordenave2015a}, who we follow closely in this part of our analysis,
also has a good discussion of these issues and their history.
We use the fact that
\begin{equation}
\label{eq:trace_bound}
\E\left(\|\bar{B}^{\ell}\|^{2k}\right)
\leq 
\E\left( 
\mathrm{Tr} 
\left[
\left( (\bar{B}^{\ell}) (\bar{B}^{\ell})^* \right)^{k} 
\right]
\right),
\end{equation}
and so deal with circuits built up of $2k$ segments which are $\ell$-tangle-free.
Notice that the first segment comes from $\bar{B}^\ell$, the second
from $(\bar{B}^{\ell})^*$, etc.
Because of this, the directionality of the edges along each segment alternates.
See Figure~\ref{fig:circuit} for an illustration of a path
which contributes for $k=2$ and $\ell=2$.
Also, while each segment is $\ell$-tangle-free,
the overall circuit may be tangled.

\begin{figure}[t!]
\centering
\includegraphics[width=0.4\linewidth]{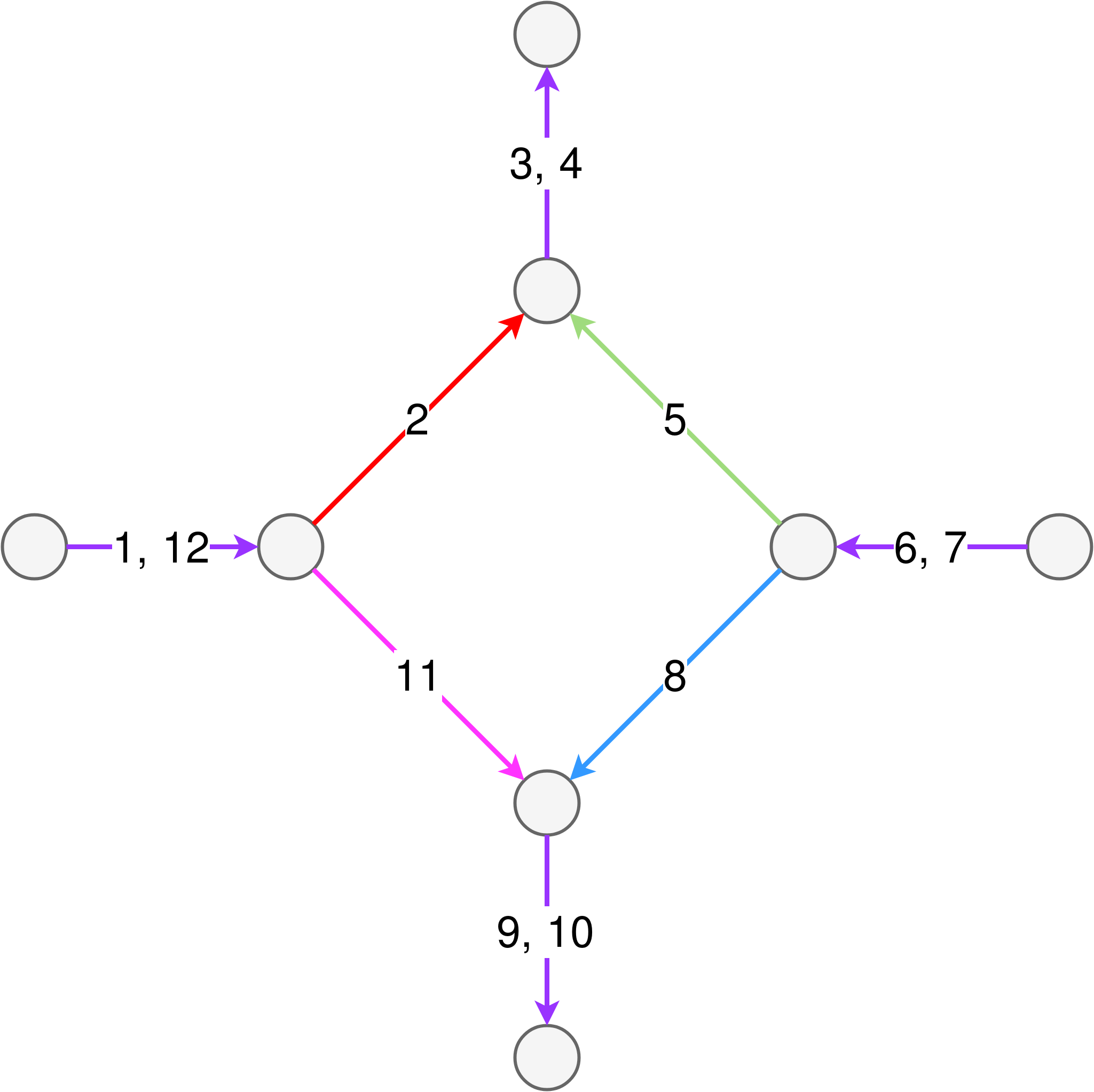}

\caption{An example circuit that contributes to the trace in 
	Eqn.~\eqref{eq:trace_bound},
 for $k = 2$ and $\ell = 2$.
 Edges are numbered as they occur in the circuit.
 Each segment $\{ \gamma_i \}_{i=1}^4$ 
 is of length $\ell + 1 = 3$ and made up of edges $3(i-1)+1$ through $3i$.
 The last edge of each $\gamma_i$ is the first edge of $\gamma_{i+1}$, 
 and these are shown in purple.
 Every path $\gamma_{i}$ with $i$ even
 follows the edges backwards due to the matrix transpose.
 However, this detail turns out not to make any difference since the underlying
 graph is undirected.
 Our example has no cycles in each segment for clarity,
 but, in general, each segment can have up to one cycle, 
 and the overall circuit may be tangled.
}
\label{fig:circuit}
\end{figure}

\subsection{Matrix decomposition}\label{sec:matrix_decomp}

We start this section by defining the set of paths that will be relevant to 
bound the norm of $\|\bar{B}^{\ell}\|$.
We closely follow \cite{bordenave2015}.
%For this section, we assume $G$ is 
%$\ell$-tangle-free, which will hold with high probability.
\begin{definition}
Define $\Gamma^{\ell}_{ef}$ to be the set of all 
non-backtracking paths of $2\ell+1$ half edges, 
starting at $e$ and ending at $f$. 
A path in this set will be denoted by
$\gamma= (e_1,e_2,\dots,e_{2\ell},e_{2\ell+1})$,
where $e_1 = e$ and $e_{2\ell + 1} = f$. 
The non-backtracking property means that,
for all $1\leq i\leq \ell$, $e_{2i}$ and $e_{2i+1}$ share the same vertex 
but $e_{2i}\neq e_{2i+1}$.
Similarly, let $\Gamma^\ell = \bigcup_{e,f} \Gamma^\ell_{ef}$.
%this is to ensure we end on the right side. We use the transpose to concatenate!
\end{definition}

Each path in $\Gamma^\ell_{ef}$ uses $2 \ell + 1$ half edges,
corresponding to $\ell + 1$ edges in the graph.
To be clear, the above definition counts all possible
non-backtracking sequences of half edges.
These are different than the usual non-backtracking paths 
{\em and do not necessarily exist in the graph}.
Some of these paths might backtrack along a duplicate
edge which utilizes a different half edge.

We now have
%%%%%%%%%%%%%%%%%%%%%%%%%%%%%%%%%%%%%%%%%%%%%%%%%%%%%%%%%%%%%%%%%%%%%%%%
\begin{comment}
non-backtracking paths
of length $2\ell+1$, starting at half edge $e$ and ending at $f$.  
For a path $\gamma \in \Gamma^{\ell}_{ef}$,
we write
$\gamma = (e_1, e_2, \ldots, e_{\ell+1})$ 
where $e_i \in \vec{E}$ for all $i$,
$e_1 = e$ and $e_{\ell+1} = f$.
Similarly, define
$F^{\ell}_{ef} \subset \Gamma^\ell_{ef}$ 
be the set of all non-backtracking, tangle-free paths
in $G$
of length $\ell+1$, starting at oriented edge $e$ and ending at $f$. 
\end{comment}
%%%%%%%%%%%%%%%%%%%%%%%%%%%%%%%%%%%%%%%%%%%%%%%%%%%%%%%%%%%%%%%%%%%%%%%%%%
\begin{equation}
\label{eq:B_in_terms_of_M}
(B^{\ell})_{ef} = 
\sum_{\gamma \in \Gamma^\ell_{ef}}
\prod_{t=1}^{\ell} B_{e_{2t-1} e_{2t+1}}
= 
\sum_{\gamma \in \Gamma^\ell_{ef}}
\prod_{t=1}^{\ell} M_{e_{2t-1} e_{2t}}N_{e_{2t}e_{2t+1}}
=
\sum_{\gamma \in \Gamma^\ell_{ef}}
\prod_{t=1}^{\ell} M_{e_{2t-1} e_{2t}}
\end{equation}
%where we note the last equality requires $G$ to be $\ell$-tangle-free.
where we used Eqn.~\ref{eq:B=MN}
and the fact that $\Gamma^{\ell}_{ef}$ is non-backtracking, 
so
$
N_{e_{2t}e_{2t+1}}=1
$.

Recall that we will use
Eqn.~\eqref{eq:upper_bound_lambda2} and
Lemma \ref{lem:subspace} to bound $\lambda_2$.
Denote by $\bar{B}$ the matrix with entries equal to
$\bar{B} = B - S$,
where
\[
 S = \frac{1}{|E|}
 \left(
 \begin{array}{cc} 
 0 & (d_2 - 1) \1 \1^* \\
 (d_1 - 1) \1 \1^* & 0 
 \end{array}
 \right ).
\]
Note that $\bar{B}$ is an \textit{almost} centered version of $B$,
and $\Ker ( S ) = \Ker(T) = \mathrm{span} (\onealpha, \onemalpha ) $, 
where $T$ is the matrix from Lemma \ref{lem:subspace}.
To apply the lemma, we wish to get an expression like Eqn.~\eqref{eq:B_in_terms_of_M} for $\bar{B}^{\ell}$. To do so, we write:
\[
\bar{B} = B - S = M N - S = (M - S') N
\]
the above matrix equation in the unknown $S'$ can be solved by simple manipulations. We get
\[
S'=\frac{1}{|E|} \left(
 \begin{array}{cc} 
 0 & \1 \1^* \\
  \1 \1^* & 0 
 \end{array}
 \right ).
\]
Using again that $N$ is identically one over the elements of the set 
$\Gamma^{\ell}_{ef}$, 
we find a similar formula to Eqn.~\eqref{eq:B_in_terms_of_M}:
\begin{align}\label{eq:barB_in_terms_of_M}
(\bar{B}^\ell)_{ef}
=
\sum_{\gamma \in \Gamma^\ell_{ef}} 
\prod_{t=1}^{\ell}
\left( B - S \right)_{e_{2t-1} e_{2t+1}}
=
\sum_{\gamma \in \Gamma^\ell_{ef}} 
\prod_{t=1}^{\ell}
\bar{M}_{e_{2t-1} e_{2t}},
\end{align}
where $\bar{M}=M-S'$.

The following telescoping sum formula is a simple algebraic manipulation 
and appears in \citet*{massoulie2013} 
and \citet*{bordenave2015a}: 
\begin{equation*}
\prod_{s=1}^\ell x_s = \prod_{s=1}^\ell y_s + 
\sum_{j=1}^\ell \prod_{s=1}^{j-1} y_s (x_j - y_j) \prod_{t=j+1}^\ell x_t .
\end{equation*}
Using this, 
with $x_s = B_{e_{2s-1} e_{2s+1}}$ 
and $y_s = \bar{B}_{e_{2s-1} e_{2s+1}}$,
we obtain the following relation:
\begin{align}
\label{B_dec1}
( B^{\ell} )_{ef} 
= 
( \bar{B}^{\ell} )_{ef} 
+ 
\sum_{\gamma\in \Gamma^{\ell}_{ef}} 
\sum_{j=1}^{\ell} 
\prod_{s=1}^{j-1}\
\bar{B}_{e_{2s-1} e_{2s+1}}
S_{e_{2j-1} e_{2j+1}}
\prod_{t=j+1}^{\ell} B_{e_{2t-1} e_{2t+1}}.
\end{align}
This decomposition breaks the elements in $\Gamma^{\ell}_{ef}$ 
into two subpaths, also non-backtracking, 
of length $j$ and $\ell-j$, respectively. 

\begin{definition}
Let $F_{ef}^\ell \subset \Gamma^\ell_{ef}$ 
denote the subset of paths
which are tangle-free,
with $F^\ell = \bigcup_{e,f} F^\ell_{ef}$. 
\end{definition}

We will take the parameter $\ell$ to be small enough so that
the path $\gamma$ is tangle-free with high probability.
Thus the sums in Eqns.~\eqref{eq:B_in_terms_of_M}
or \eqref{eq:barB_in_terms_of_M}
need only be over the paths 
$\gamma \in F_{ef}^\ell$.
However,
to recover the matrices $B$ and $\bar{B}$ by rearranging 
Eqn.~\eqref{B_dec1},
we need to also count those tangle-free 
subpaths that arise from splitting tangled paths.
While breaking a tangle-free path will necessarily
give us two new tangle-free subpaths, the converse is not always true. 
This extra term generates a remainder that we define now.

\begin{definition}
\label{def:tangled_paths}
Let $T^{\ell,j}_{ef}$ 
be the set of non-backtracking paths containing $2\ell+1$ half edges,
starting at $e$ and ending at $f$,
such that overall the path is tangled
but the first $2j-1$, middle three, and last $2(\ell - j)+1$ 
half edges form tangle-free subpaths:
$\gamma = (e_1, \ldots, e_{2\ell + 1}) \in T^{\ell,j}$
if and only if
$\gamma' = (e_1, \ldots, e_{2j-1}) \in F^{j-1}$,
$\gamma'' = (e_{2j-1}, e_{2j}, e_{2j+1}) \in F^{1}$,
and $\gamma''' = (e_{2j+1}, \ldots, e_{2\ell + 1}) \in F^{\ell - j}$.
Set $T^{\ell,j} = \bigcup_{e,f} T^{\ell,j}_{ef}$.
\end{definition}
Set the remainder
\begin{align}
    \label{error_term}
    R^{\ell,j}_{ef} 
    &= 
    \sum_{\gamma\in T^{\ell,j}_{ef}} 
    \sum_{j=1}^{\ell}
    \prod_{s=1}^{j-1}
    \bar{B}_{e_{2s-1} e_{2s+1}}
    S_{e_{2j-1} e_{2j+1}}
    \prod_{t=j+1}^{\ell} B_{e_{2t-1} e_{2t+1}}
    \nonumber \\
    &=
    \sum_{\gamma\in T^{\ell,j}_{ef}} 
    \sum_{j=1}^{\ell} 
    \prod_{s=1}^{j-1}\
    \bar{M}_{e_{2s-1} e_{2s}}
    S_{e_{2j-1} e_{2j+1}}
    \prod_{t=j+1}^{\ell} 
    M_{e_{2t-1} e_{2t}}.
\end{align}
Since the paths are non-backtracking, 
the $N$ terms are all unity.

Adding and subtracting 
$\sum_{j=1}^{\ell} R^{\ell,j}_{ef}$ 
to Eqn.~\eqref{B_dec1}
and rearranging the sums, we obtain
\begin{align}
\label{B_dec2}
B^{(\ell)} = 
\bar{B}^{(\ell)} + 
\sum_{j=1}^{\ell} 
\bar{B}^{(j)} S B^{(\ell-j)}
- 
\sum_{j=1}^{\ell} R^{\ell,j},
\end{align}
where the matrices $B^{(\ell)}$ and $\bar{B}^{(\ell)}$
are tangle-free versions of $B^\ell$ and $\bar{B}^\ell$,
i.e.\ element $ef$ in both matrices
only counts paths $\gamma \in F_{ef}^\ell$.
Multiplying Eqn.~\eqref{B_dec2} on the right by 
$x \in \Ker (T)$
and using that 
$B^{(\ell - j)} x$
is also within $\Ker(S)$,
since it is just the space spanned by the leading eigenvectors,
we find that the middle term is identically zero.
Thus for $x \in \Ker(T)$,
\begin{equation}\label{decomposition}
\| B^{(\ell)}x \| 
\leq 
\| \bar{B}^{(\ell)} x \|+
\left\| 
%\|
\sum_{j=1}^{\ell}R^{\ell,j} x 
%\|
\right\|
\leq 
\| \bar{B}^{(\ell)} \|+
\sum_{j=1}^{\ell} \| R^{\ell,j} \| .
\end{equation}

\subsection{Expectation bounds}
\label{sec:expectation_bounds}

Our goal is to find a bound on the expectation of certain random variables
which are products of $\bar{B}_{ef}$ along a circuit.
To do this, we will need to bound the probabilities of different
subgraphs when exploring $G$.
This requires us to introduce the concept of consistent
edges and their multiplicity.

\begin{definition}\label{def:incon_edges}
Let $\gamma = (e_1, \ldots, e_{2k})$ be a sequence of half edges
of even length, with 
$\vec{E}(\gamma)$ its set of half edges,
and $E(\gamma) = \{ \{e_{2i-1}, e_{2i}\} \mbox{ for $i\in [k]$} \}$ its set of edges (unordered pairs, thus undirected).
\begin{itemize}
    \item The {\em multiplicity of a half edge} $e \in \vec{E}(\gamma)$
    is $m_\gamma(e) = \sum_{t=1}^{2k} 1_{\{ e_t = e \}}$.
    \item The {\em multiplicity of an edge} $\{h_1, h_2\} \in E(\gamma)$,
    is 
    $m_\gamma(\{h_1,h_2\}) = 
    \sum_{t=1}^k 1_{\{ \{e_{2t-1}, e_{2t}\} = \{h_1, h_2\} \}}$.
    \item An edge $\{h_1, h_2\}$ is {\em consistent} if 
    $m_\gamma(h_1) = m_\gamma(h_2) = m_\gamma(\{h_1, h_2\})$.
\end{itemize}
\end{definition}

\begin{lemma}
\label{expectation_bound}
Let  
$\gamma = (e_1, \ldots, e_{2k})$ be a sequence of half edges
of even length, with
$M$ and $\bar{M}$ the matching matrix and its
centered version generated by a uniform matching in the 
configuration model. 
Then for $1 \leq k \leq \sqrt{|E|}$ and $0 \leq t_0 \leq k$
we have that
\[
\left| 
\E 
\prod_{t=1}^{t_0} \bar{M}_{e_{2t - 1} e_{2t}}
\prod_{t=t_0+1}^{k} M_{e_{2t - 1} e_{2t}}
\right|
\leq 
C \cdot 2^b \cdot 
\left( \frac{1}{|E|} \right)^\nedge
\left( \frac{3 k}{\sqrt{|E|}} \right)^{\nedge_1}
\]
where
$b = $ number of inconsistent edges of multiplicity one occuring
before $t_0$,
$\nedge_1 = $ number of consistent edges with multiplicity one
occuring before $t_0$,
$\nedge = |E(\gamma)|$,
and $C$ is a universal constant.
\end{lemma}

\begin{proof}
Recall the form of the matrices
\begin{equation*}
    M = 
    \left(
    \begin{array}{cc} 
        0 & M_1 \\
        M_1^* & 0 
    \end{array}
    \right )
    \quad \mbox{and} \quad
    \bar{M} = M - 
    \frac{1}{|E|} \left(
    \begin{array}{cc} 
        0 & \1 \1^* \\
        \1 \1^* & 0 
    \end{array}
    \right ).
\end{equation*}
Matrix $M_1 \in \R^{|E| \times |E|}$ is a random permutation matrix between 
$n d_1 = |E|$ and $m d_2 = |E|$ half edges.
Therefore, $M_1$ is distributed exactly the same as a matching matrix
of a random $|E|$-lift of a single edge,
and the same holds for its centered version $M_1 - \frac{1}{|E|} \1 \1^*$.
The only paths $\gamma$ that contribute in this bipartite setting must alternate
between the bipartite sets and avoid the 0 blocks,
otherwise the bound holds trivially.
For one of these paths $\gamma$
assume, without loss of generality, that the path starts in set $V_1$.
Then define the transformed path 
$\gamma' = (e_1', \ldots, e_{2k}') = (e_1, e_2, e_4, e_3, e_5, \ldots)$,
i.e.\ with every other pair in $\gamma$ in reverse order.
Note that
\begin{equation}
\prod_{t=1}^{t_0} \bar{M}_{e_{2t - 1} e_{2t}}
\prod_{t=t_0+1}^{k} M_{e_{2t - 1} e_{2t}}
=
\prod_{t=1}^{t_0} (\bar{M}_1)_{e'_{2t - 1} e'_{2t}}
\prod_{t=t_0+1}^{k} (M_1)_{e'_{2t - 1} e'_{2t}} .
\end{equation}
Then the Lemma holds by
\cite{bordenave2015}, Proposition 28.
\end{proof}

\subsection{Path counting}
\label{sec:path_counting}

This section is devoted to counting
the number of ways non-backtracking
walks can be concatenated to obtain a circuit as in 
Section~\ref{sec:matrix_decomp}.
We will follow closely the combinatorial analysis used in 
\citet*{brito2016}. 
In that paper, the authors needed a similar count for 
self-avoiding walks. 
We make the necessary adjustments to our current scenario.
% This is similar to the ``cycling times'' arguments of \citet*{bordenave2015a}.

Our goal is to find a reasonable bound for the number of circuits which
contribute to the trace bound, 
Eqn.~\eqref{eq:trace_bound} and shown graphically in Figure~\ref{fig:circuit}.
Define $\mathcal{C}_{\nvertex, \nedge}^\nvright$ as those circuits
which visit exactly 
$\nvertex = |V(\gamma)|$ different vertices, 
$\nvright = |V(\gamma) \cap V_2|$ of them in the right set, and
$\nedge = |E(\gamma)|$ different edges. Note, these are undirected edges in $E(G)$.
This is a set of circuits of length $2 k \ell$ obtained as the
concatenation of $2k$ non-backtracking,
tangle-free walks of length
$\ell$.
We denote such a circuit as
$\gamma= ( \gamma_1,\gamma_2,\cdots, \gamma_{2k} )$,
where each $\gamma_j$ is a length $\ell$ walk.

To bound $C_{\nvertex, \nedge}^\nvright = | \mathcal{C}_{\nvertex, \nedge}^\nvright |$,
we will first choose the set of vertices and order them.
The circuits which contribute are indeed directed non-backtracking walks.
However, by considering undirected walks along a fixed ordering of vertices,
that ordering sets the orientation of the first and thus the rest of the
directed edges in $\gamma$.
Thus, we are counting the directed walks which contribute to 
Eqn.~\eqref{eq:trace_bound}.
We relabel the
vertices as $1,2, \ldots, \nvertex$ as they appear in $\gamma$.
Denote by $\mathcal{T}_{\gamma}$ 
the spanning tree of those edges
leading to new vertices
as induced by the path $\gamma$.
The enumeration of the vertices tells us how we
traverse the circuit
and thus defines $\mathcal{T}_{\gamma}$ uniquely.

We encode each walk $\gamma_j$
by dividing it into sequences of subpaths of three types,
which in our convention 
{\it must always occur} as type 1 $\to$ type 2 $\to$ type 3,
although some may be empty subpaths.
Each type of subpath is encoded with a number, 
and we use the encoding to upper bound the number of such paths that can occur.
Given our current
position on the circuit, i.e.\ the label of the current vertex, and
the subtree of $\mathcal{T}_{\gamma}$ already discovered 
(over the whole circuit $\gamma$
not just the current walk $\gamma_j$), 
we define the types and their encodings:
\begin{enumerate}[{Type} 1:]
\item These are paths with the property that all of their edges
are edges of $\mathcal{T}_{\gamma}$ and have been traversed already in
the circuit. 
These paths can be encoded by their end vertex.
Because this is a path contained in a tree,
there is a unique path connecting its initial and final vertex. 
We use 0 if the path is empty.
\item These are paths with all of their
edges in $\mathcal{T}_{\gamma}$ 
but which are traversed for the first time in the circuit. 
We can encode these paths by their length,
since they are traversing new edges, and we know in what order the
vertices are discovered. 
We use 0 if the path is empty.
\item These paths are simply a single edge, 
not belonging to $\mathcal{T}_\gamma$,
that connects the end of a path of type 1 or 2 
to a vertex that has been already discovered. 
Given our position on the circuit, 
we can encode an edge by its final vertex. 
Again, we use 0 if the path is empty.
\end{enumerate}
Now, we decompose $\gamma_j$ into an ordered sequence of triples
to encode its subpaths:
\[ 
(p_1,q_1,r_1) (p_2,q_2,r_2) \cdots (p_t,q_t,r_t),
\]
where each $p_i$ characterizes subpaths of type 1, $q_i$ characterizes
subpaths of type 2, and $r_i$ characterizes subpaths of type 3.
These subpaths occur in the order given by the triples.
We perform this decomposition using the minimal possible number of triples.

Now, $p_i$ and $r_i$ are both numbers in $\{0,1,...,\nvertex \}$, since
our cycle has $\nvertex$ vertices. On the other hand, $q_i \in \{ 0,1,...,\ell \}$
since it represents the length of a subpath of a non-backtracking
walk of length $\ell$. 
Hence, there are  $(\nvertex + 1)^2 (\ell+1)$ possible triples. 
Next, we want to bound how many of these triples occur in $\gamma_j$. 
We will use the following lemma.

\begin{lemma} 
Let $(p_1,q_1,r_1) (p_2,q_2,r_2) \cdots (p_t,q_t,r_t)$ 
be a minimal encoding of a non backtracking walk $\gamma_j$, as described above. 
Then $r_i = 0$ can only occur in the last triple $i = t$.
\end{lemma}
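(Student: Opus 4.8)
The plan is to prove the statement by contradiction, exploiting the fact that the encoding uses the \emph{minimal} possible number of triples: assuming $r_i = 0$ for some $i < t$, I will produce an encoding of $\gamma_j$ with only $t-1$ triples. Write the subpaths coded by triple $i$ as $P_i$ (type 1), $Q_i$ (type 2), $E_i$ (type 3). Since $r_i = 0$, the subpath $E_i$ is empty, so in the traversal of $\gamma_j$ the type-2 subpath $Q_i$ is immediately followed by the type-1 subpath $P_{i+1}$ of triple $i+1$. I will then split into two cases according to whether $Q_i$ is empty.

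If $Q_i$ is nonempty, its last edge is an edge of $\mathcal{T}_{\gamma}$ traversed for the first time in the circuit, hence it discovers a brand-new vertex $v$; consequently the \emph{only} edge of $\mathcal{T}_{\gamma}$ incident to $v$ that has been traversed up to this point is precisely that last edge of $Q_i$. Because $\gamma_j$ is non-backtracking, the walk cannot leave $v$ along an already-traversed tree edge, so $P_{i+1}$ must be empty. Then $Q_i$ is followed directly by $Q_{i+1}$; since both consist of first-time-traversed edges of $\mathcal{T}_{\gamma}$ and the concatenation is still non-backtracking, $Q_iQ_{i+1}$ is again a legitimate type-2 subpath. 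Replacing triples $i$ and $i+1$ by the single triple $(p_i,\, q,\, r_{i+1})$, with $q$ encoding $Q_iQ_{i+1}$, and leaving triples $i+2,\dots,t$ untouched, gives a valid encoding with $t-1$ triples, contradicting minimality.

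If $Q_i$ is empty, then triple $i$ contributes only the type-1 subpath $P_i$, after which we traverse $P_{i+1}$. The concatenation $P_iP_{i+1}$ is a non-backtracking walk all of whose edges lie in $\mathcal{T}_{\gamma}$; since a non-backtracking walk confined to a forest cannot revisit a vertex (a revisit would force a cycle of length at least three in $\mathcal{T}_{\gamma}$), $P_iP_{i+1}$ is a simple path, hence equal to the unique path in $\mathcal{T}_{\gamma}$ between its endpoints, and is therefore a legitimate type-1 subpath, encodable by its terminal vertex. Replacing triples $i$ and $i+1$ by $(p,\, q_{i+1},\, r_{i+1})$, with $p$ encoding $P_iP_{i+1}$, again produces an encoding with $t-1$ triples, a contradiction. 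Hence $r_i = 0$ forces $i = t$.

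The only genuinely delicate point, and the step I would be most careful about, is the bookkeeping in the first case: one must verify that at the freshly discovered vertex $v$ no previously traversed tree edge other than the one just used is available — which is immediate once one recalls that a type-2 edge is, by definition, traversed for the first time \emph{in the whole circuit}, so $v$ has never been visited before. The remaining ingredients — that a concatenation of consecutive type-2 subpaths is again type-2, and that a non-backtracking walk in a forest is a simple path — are routine, and I would state them as short observations rather than belabor them.
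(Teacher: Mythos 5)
Your proof is correct and follows essentially the same approach as the paper's: a case split on whether $Q_i$ is empty, with each branch producing a strictly shorter valid encoding of $\gamma_j$, contradicting minimality. The only cosmetic difference is that in the first branch you force $P_{i+1}$ to be empty (non-backtracking at the freshly discovered vertex $v$) and then merge $Q_i$ with $Q_{i+1}$, whereas the paper first rules out $P_{i+1}=\emptyset$ (as splitting a type-2 path) and then observes the remaining possibility requires a type-3 edge, forcing $r_i\neq 0$ --- but both hinge on the same underlying fact that one cannot step from a first-time tree edge directly onto an already-traversed tree edge at a brand-new vertex.
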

\begin{proof} 
We can check this case by case. 
Assume that for some
$i<t$ we have $(p_i,q_i,0)$,
and consider the concatenation with
$(p_{i+1},q_{i+1},r_{i+1})$. 
First, notice that both $p_{i+1}$ and
$q_{i+1}$ cannot be zero since then we will have
$(p_i,q_i,0)(0,0,v^*)$ which can be written as $(p_i,q_i,v^*)$. 
If $q_i\neq 0$,
then we must have $p_{i+1} \neq 0$.
Otherwise, we split a path of new edges (type 2),
and the decomposition is not minimal.
This implies that we visit new edges and move to edges
already visited, hence we need to go through a type 3 edge, 
implying that $r_i \neq 0$. 
Finally, if $p_i \neq 0$ and $q_i = 0$, 
then we must have
$p_{i+1}=0$; 
otherwise, we split a path of old edges (type 1). 
We also require
$q_{i+1} \neq 0$, 
but
$(p_i,0,0)(0,q_{i+1},r_{i+1})$ is the same as $(p_i,q_{i+1},r_{i+1})$, 
which contradicts the minimality condition. 
This covers all possibilities and
finishes the proof.
\end{proof}

\begin{figure}
\centering
\includegraphics[width=0.75\linewidth]{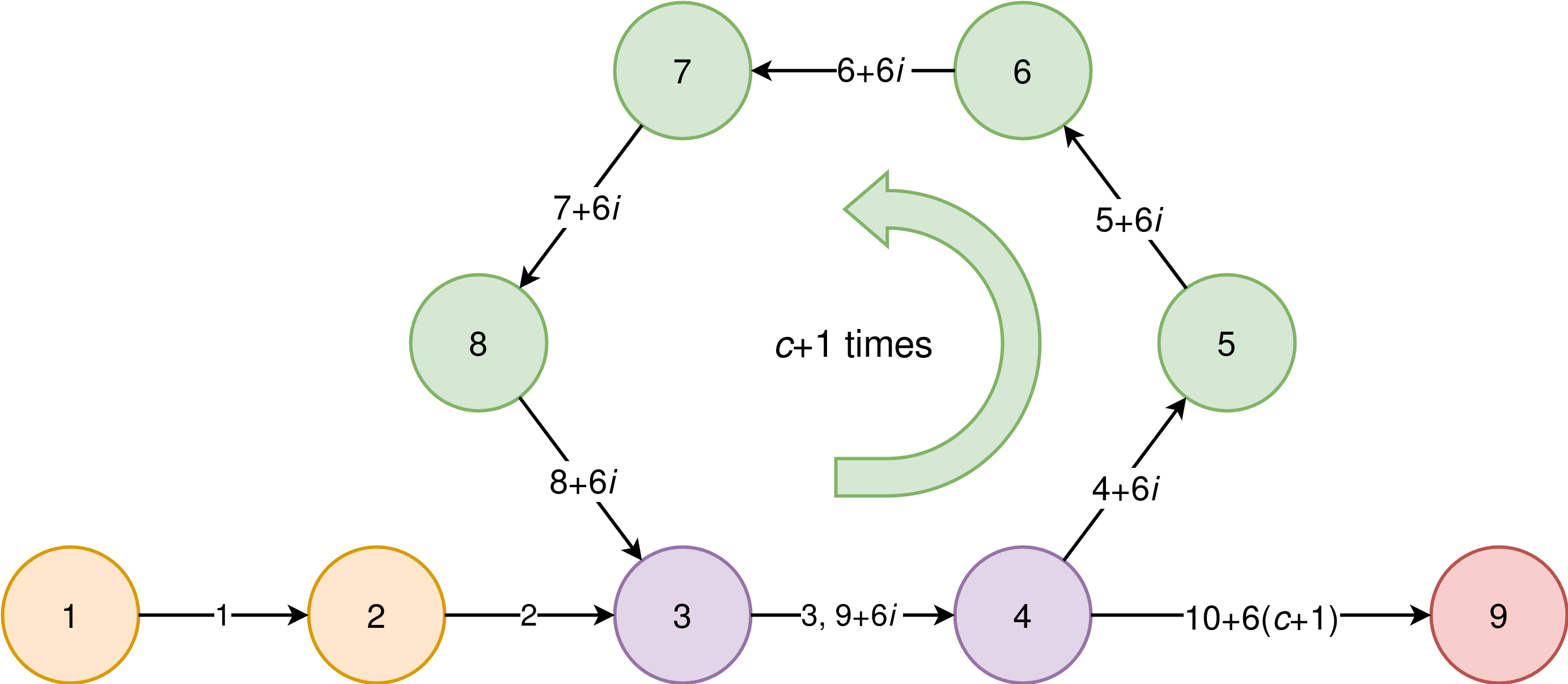}
\caption{
Encoding an $\ell$-tangle-free walk, 
in this case the first walk in the circuit $\gamma_1$, 
when it contains a cycle. 
The vertices and edges are labeled in the order of their traversal.
The segments $\gamma^a$, $\gamma^b$, and $\gamma^c$ occur on edges numbered
$(1, 2, 3)$; $(4 + 6i, 5+6i, 6+6i, 7+6i, 8+6i, 9+6i)$ for $i = 0, 1, \ldots c$;
and $(10+6c)$, respectively.
The encoding is $(0,3,0) | (0,4,3)(4,0,0) \| (0,1,0)$.
Suppose $c = 1$. 
Then $\ell = 22$
and the encoding is of length $3 + (4+1+1)(c+1) + 1$, 
we can back out $c$ to find that the cycle is repeated twice.
The encodings become more complicated later in the circuit as vertices 
see repeat visits. 
}
\label{fig:path_encoding}
\end{figure}

Using the lemma, any encoding of a non-backtracking walk
$\gamma_j$ has
at most one triple with $r_i=0$. 
All other triples indicate the
traversing of a type 3 edge. 
We now give a very rough upper bound for 
how many of such encodings there can be. 
To do so, we will use the tangle-free property and
slightly modify the encoding of the paths with cycles.
Consider the two cases:
\begin{enumerate}[{Case} 1:]
\item {Path $\gamma_j$ contains no cycle.} 
This implies that we traverse each edge within
$\gamma_j$ once. 
Thus, we can have at most $\chi= \nedge - \nvertex + 1$ many
triples with $r_i\neq 0$. 
This gives a total
of at most
\[ 
 \left((\nvertex+1)^2 (\ell+1) \right)^{\chi+1}
\]
many ways to encode one of these paths.
\item 
{Path $\gamma_j$ contains a cycle.} 
Since we are dealing with non-backtracking, tangle-free walks, 
we enter the cycle once, 
loop around some number of times, 
and never come back. 
We change the encoding of such paths slightly. 
Let $\gamma_j^{a}$, $\gamma_j^{b}$, and $\gamma_j^{c}$
be the segments of the path before, during, and after the cycle.
We mark the start of the cycle with $|$ and its end with $\|$. 
The new encoding of the path is:
\[ 
(p^a_1,q^a_1,r^a_1) \cdots (p^a_{t^a},q^a_{t^a},r^a_{t^a}) 
\, | \,
(p^b_1,q^b_1,r^b_1) \cdots (p^b_{t^b},q^b_{t^b},r^b_{t^b}) 
\, \| \,
(p^c_1,q^c_1,r^c_1) \cdots (p^c_{t^c},q^c_{t^c},r^c_{t^c}),
\]
where we encode the segments separately. 
Observe that each a subpath is connected and self-avoiding.
The above encoding tells
us all we need to traverse $\gamma_j$,
including how many times to loop around the cycle:
since the total length is $\ell$, 
we can back out the number of circuits around the cycle
from the lengths of $\gamma_j^{a}$, $\gamma_j^{b}$, and $\gamma_j^{c}$.
See Figure~\ref{fig:path_encoding}.
Following the analysis made for Case 1, 
the subpaths $\gamma_j^{a}$, $\gamma_j^{b}$, $\gamma_j^{c}$
are encoded by at most $\chi + 1$ triples,
but we also have at most $\ell$ choices each for our marks
$|$ and $\|$. 
We are left with at most
\[
 \ell^2 \left((\nvertex+1)^2 (\ell+1) \right)^{\chi+1}
\]
ways to encode any path of this kind.
\end{enumerate}
Together, these two cases mean there are less than
$
%2 \ell^2 \left((\nvertex+1)^2 (\ell+1) \right)^{\chi+1} \leq
2 \ell^2 \left((\nvertex+1)^2 (\ell+1) \right)^{\chi+1}
$
such paths.

Now we conclude by encoding the entire circuit 
$\gamma = (\gamma_1, \ldots, \gamma_{2k})$.
We first choose $\nvertex$ vertices, $\nvright$ in the set $V_2$, and order them,
which can occur in 
$(m)_\nvright (n)_{\nvertex-\nvright} \leq m^\nvright n^{\nvertex - \nvright}$ different ways. 
Finally, in the whole path $\gamma$ 
we are counting concatenations of
$2k$ paths which are $\ell$-tangle-free.
Therefore, we conclude with the following Lemma:

\begin{lemma}
\label{lem:circuit_count}
Let $\mathcal{C}_{\nvertex, \nedge}^\nvright$ 
be the set of circuits
$\gamma = (\gamma_1, \ldots, \gamma_{2k})$
of length $2 k \ell$ obtained as the
concatenation of $2k$ non-backtracking,
tangle-free walks of length
$\ell$,
i.e.\ $\gamma_s \in F^\ell$ for all $s \in [2k]$,
which visit exactly 
$\nvertex = |V(\gamma)|$ different vertices, 
$\nvright = |V(\gamma) \cap V_2|$ of them in the right set, and
$\nedge = |E(\gamma)|$ different edges.
If $C_{\nvertex, \nedge}^\nvright = |\mathcal{C}_{\nvertex, \nedge}^\nvright|$, then
\begin{equation}
C_{\nvertex, \nedge}^\nvright
\leq
m^\nvright n^{\nvertex - \nvright} (2 \ell)^{4k} 
\left( (\nvertex+1)^2 (\ell+1) \right)^{2k (\chi + 1)},
\end{equation}
where $\chi = \nedge - \nvertex + 1$.
\end{lemma}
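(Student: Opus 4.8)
The plan is to assemble the encoding machinery set up above into a product bound, separating the choice of the ordered vertex set from the choice of each of the $2k$ constituent walks. First I would count the vertices: a circuit in $\mathcal{C}_{\nvertex,\nedge}^\nvright$ visits exactly $\nvertex$ distinct vertices, $\nvright$ of them in $V_2$, so relabelling them $1,\dots,\nvertex$ in order of first appearance, the number of ways to pick and order such a set is $(m)_\nvright(n)_{\nvertex-\nvright}\le m^\nvright n^{\nvertex-\nvright}$. The point to stress here is that once this ordered vertex set is fixed, so is the spanning tree $\mathcal{T}_\gamma$ of first-traversal edges, and so is the orientation of every directed edge of $\gamma$ (the orientation of the first edge is forced and non-backtracking propagates it), so it suffices to count underlying undirected walks.

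Next I would bound, for a fixed ordered vertex set, the number of choices for a single walk $\gamma_j$. Using the type~1 / type~2 / type~3 decomposition into a minimal sequence of triples $(p_i,q_i,r_i)$, each triple lies in a set of size $(\nvertex+1)^2(\ell+1)$. By the structure lemma just proved, at most one triple can have $r_i=0$, so every other triple consumes a type~3 edge; since the whole circuit uses only $\chi=\nedge-\nvertex+1$ non-tree edges, an acyclic walk decomposes into at most $\chi+1$ triples, giving at most $\big((\nvertex+1)^2(\ell+1)\big)^{\chi+1}$ encodings. When $\gamma_j$ contains its unique (by tangle-freeness) cycle, I would split it at the cycle's entry and exit, paying a factor of at most $\ell$ for each of the two markers $|$ and $\|$, and encode the three resulting self-avoiding segments, each with at most $\chi+1$ triples; since the total length $\ell$ is prescribed, the number of passes around the cycle is recoverable from the segment lengths. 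Both cases are dominated by $(2\ell)^2\big((\nvertex+1)^2(\ell+1)\big)^{\chi+1}$.

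Finally I would multiply: $\gamma=(\gamma_1,\dots,\gamma_{2k})$ is a concatenation of $2k$ such walks (the last edge of $\gamma_i$ is the first of $\gamma_{i+1}$, which only removes freedom), so combining the per-walk bound over $j=1,\dots,2k$ with the vertex count yields
\[
C_{\nvertex,\nedge}^\nvright \le m^\nvright n^{\nvertex-\nvright}\,(2\ell)^{4k}\big((\nvertex+1)^2(\ell+1)\big)^{2k(\chi+1)},
\]
as claimed. The genuinely delicate point — and the step I expect to need the most care — is injectivity of the overall encoding: one must verify that the ordered vertex list, together with the triple sequences and cycle markers of all $2k$ walks read in traversal order (so that which edges of $\mathcal{T}_\gamma$ are ``already discovered'' is unambiguous), reconstructs $\gamma$ uniquely, and that the minimality convention, via the $r_i=0$ lemma, genuinely caps the number of triples per self-avoiding segment at $\chi+1$ rather than something larger. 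The rest is bookkeeping.
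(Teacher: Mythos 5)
Your proposal follows the paper's proof essentially step for step: ordered vertex count giving $(m)_\nvright(n)_{\nvertex-\nvright}$, the type-1/2/3 triple encoding with at most one $r_i=0$, bounding the non-tree triples by the tree excess $\chi$, splitting a cyclic walk at its unique cycle with two length-$\ell$ markers, and then raising the per-walk bound to the power $2k$. The one place your text does not quite justify the bound you state is the cycle case: you write that the three segments $\gamma^a,\gamma^b,\gamma^c$ are encoded ``each with at most $\chi+1$ triples,'' which, taken literally, would give $\bigl((\nvertex+1)^2(\ell+1)\bigr)^{3(\chi+1)}$ rather than the factor $\bigl((\nvertex+1)^2(\ell+1)\bigr)^{\chi+1}$ you then claim; what the paper (and your final bound) actually needs is that the three segments \emph{together} use at most $\chi+1$ triples, which holds because the segments are edge-disjoint self-avoiding paths, so the number of type-3 (chord) traversals across all three is still capped by $\chi$. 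With that correction the argument matches the paper's. You are also right to single out injectivity of the encoding as the delicate point; the paper handles it via the minimality lemma on when $r_i=0$ can occur, exactly as you anticipate.
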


The circuits that contribute to the remainder term
$R^{\ell, j}$ 
are slightly different.
In this case, each length $\ell$ segment
is an element of $T^{\ell, j}$ rather than $F^{\ell}$.
We have to slightly modify the previous argument for this case.

\begin{lemma}\label{lem:combinatorial_of_reminder}
Let $\mathcal{D}_{\nvertex,\nedge}^\nvright$ 
be the set of circuits
$\gamma = (\gamma_1, \ldots, \gamma_{2k})$
of length $2k\ell$
obtained as the concatenation
of $2k$ elements 
$\gamma_s \in T^{\ell, j}$
for $s = 1, \ldots, 2k$,
that visit exactly $\nvertex$ vertices, 
$\nvright$ of which are in $V_2$, and $\nedge$ different edges. 
Then for $D_{\nvertex,\nedge}^\nvright = | \mathcal{D}_{\nvertex,\nedge}^\nvright |$,
we have
\begin{align}
\label{bound:D}
D_{\nvertex,\nedge}^\nvright
\leq
m^\nvright n^{\nvertex-\nvright}
(2 \ell)^{6k} 
( (\nvertex + 1)^2 (\ell + 1) )^{6k(\chi + 1)} .
\end{align}
\end{lemma}

\begin{proof}
Since each 
$\gamma^s = (e_1, \ldots, e_{2\ell + 1}) \in T^{\ell,j}$,
we have that
$\gamma' = (e_1, \ldots, e_{2j-1}) \in F^{j-1}$,
$\gamma'' = (e_{2j-1}, e_{2j}, e_{2j+1}) \in F^{1}$,
and $\gamma''' = (e_{2j+1}, \ldots, e_{2\ell + 1}) \in F^{\ell - j}$.
Encoding $\gamma'$, $\gamma''$, and $\gamma'''$ as before,
we have the generous upper bound of at most
\[
\left[ (2\ell) ((\nvertex + 1)^2(\ell+1))^{\chi+1} \right]^3
\]  
many encodings for each $\gamma_s$. 
Choosing and ordering the vertices, then concatenating $2k$ of these paths gives
the final result.
\end{proof}

\subsection{Half edge isomorphism counting}
   
We have constructed the circuits in 
$\mathcal{C}_{\nvertex, \nedge}^\nvright$
and
$\mathcal{D}_{\nvertex, \nedge}^\nvright$
by choosing the vertices and edges that participate in them.
However, the expectation bound applies to matchings
of half-edges in the configuration model.
Since there are multiple ways to configure the half edges 
into such a circuit, this must
be taken into account in the combinatorics.

\begin{lemma}
\label{lem:edge_iso}
Let $I_{\nvertex, \nedge}^\nvright$
be the number of half edge choices for the graph
induced by
$\gamma \in \mathcal{C}_{\nvertex, \nedge}^\nvright
\cup \mathcal{D}_{\nvertex, \nedge}^\nvright$.
Then,
\begin{equation}
    I_{\nvertex, \nedge}^\nvright 
    \leq 
    d_1^{\nvertex - \nvright} 
    (d_1 - 1)^{\nedge - \nvertex + \nvright}
    d_2^{\nvright}
    (d_2 - 1)^{\nedge - \nvright} .
    % (d_1 (d_1 - 1))^{\nvertex - \nvright} 
    % (d_2 (d_2 - 1))^{\nvright}
    % d^{2 (\chi + k)},
\end{equation}
\end{lemma}

\begin{proof}
% First, consider the simple case where every vertex has degree at most two,
% i.e.\ the circuit $\gamma$ is a path.
% For a left vertex we will have $d_1 (d_1 - 1)$ choices for 
% the two half edges which are used in its outgoing edges, 
% and there are $\nvertex - \nvright$ many of them,
% which leads to the term $(d_1 (d_1 - 1))^{\nvertex - \nvright}$.
% Similarly, every right vertex has $d_2 (d_2 - 1)$ choices for 
% the two half edges, 
% and there are $\nvright$ many of them,
% which gives the term $(d_2 (d_2 - 1))^{\nvright}$.

% Now consider the case when some vertices have degree greater than two. 
% These occur due to either cycles in the circuit
% or backtracking.
% Recall that $\chi$ corresponds to the number of 
% fundamental cycles in the circuit.
% Every fundamental cycle either adds one to the degree of two vertices
% or adds two to the degree of one vertex.
% Either of these cases has at most $d^2$ choices for the half edges,
% giving a term of $d^{2\chi}$.
% Every backtrack can add one to the degree of at most one vertex,
% and since the circuit can backtrack at most $2k$ times
% this leads to a term of $d^{2k}$.

For every left vertex $v$, with degree $g_v$ on the graph induced by $\gamma$, 
the number of choices of half edges is $d_1(d_1-1)\dots(d_1- g_v+1)\leq d_1 (d_1-1)^{g_v-1}$. 
Note that the choice of half edges are independent for the left vertices. 
We then get that there are 
$d_1^{\nvertex-\nvright} (d_1-1)^{\nedge - \nvertex+\nvright}$ 
many choices, 
where we used that the sum of all the degrees on one component of a bipartite graph 
equals the number of edges: $\nedge = \sum_v g_v$. 
Similarly, for right vertices we get 
$d_2^{\nvright}(d_2-1)^{(\nedge-\nvright)}$.
\end{proof}

\begin{corollary}
\label{cor:edge_iso}
We have that
\begin{equation}
    I_{\nvertex, \nedge}^\nvright \leq 
    (d_1 (d_1 - 1))^{\nvertex - \nvright}
    (d_2 (d_2 - 1))^\nvright
    (d - 1)^{2 (\chi - 1)}.
\end{equation}
\end{corollary}

\subsection{Bounding the imbalance $\psi$}

We focus now on the quantity defined as 
$\psi = \nvright - \nedge / 2 $. 
Informally, $\psi$ captures the {\it imbalance} between the number of vertices 
on each partition of the bipartite graph visited by the circuit $\gamma$. 
We show that this imbalance is not too large.
\begin{lemma}\label{lem:imbalance}
Let $\ell < \frac{1}{32} \log_d(n)$,
then $\psi \leq 16 k^2$ with high probability.
\end{lemma}
\begin{proof}
For any subgraph $H$ define $\psi(H) = \nvright(H)-\nedge(H)/2$. 
We set $\psi=\psi(\gamma)$. 
To bound this quantity, we analyse the subgraph 
$\gamma_{\leq i}$, 
obtained by the concatenation of the first $i$ walks in $\gamma$,
i.e.\ the union of the graphs induced by $\gamma_1, \ldots, \gamma_i$.
Our choice of $\ell$ 
implies that every neighborhood of radius $4\ell$ is tangle-free with high probability. Hence, every non-backtracking walk $\gamma_j$ 
is either a path or a path with exactly one loop.
It is not hard to conclude that 
$\psi(\gamma_j) \leq 2$ for all $j$ and 
$\psi(\gamma_{\leq 1}) = \psi(\gamma_1) \leq 2$. 
We now proceed inductively to add walks to our graph, one by one, 
as they appear on the circuit.
We will upper bound the increment 
$\psi(\gamma_{\leq i+1}) - \psi(\gamma_{\leq i})$ 
by looking at how the addition of $\gamma_{i+1}$ changes the imbalance. 

To analyse this, consider the intersection of 
$\gamma_{i+1}$ and each $\gamma_j$, $1\leq j\leq i$. 
Notice that $\psi$ may increase only if there are vertices 
at which the two walks split apart. 
We claim that there are at most two such vertices. 
Assume that at $v_1,v_2$ and $v_3$ the two walks split. 
Then there are two disjoint cycles in the union of $\gamma_{i+1}$ and $\gamma_j$, 
obtained by following each first from $v_1$ to $v_2$ 
and then from $v_2$ to $v_3$. 
But this is a contradiction, 
since the diameter of this union is less than $2 \ell < \frac{1}{8} \log(n)$,
which implies that their union is tangle-free. 
We conclude that $\psi(\gamma_{i+1} \cup \gamma_j) \leq 8$
since there are at most two splits and 
each split contributes at most four to the imbalance. 
Then
$\psi(\gamma_{\leq i+1}) \leq \psi(\gamma_{\leq i}) + 8i + 2$,
which implies that $\psi(\gamma)\leq 16k^2$, as desired.
\end{proof}

\subsection{Bounding the inconsistent edges} 
We will need a bound on the number of inconsistent edges of multiplicity one,
which we get in the following lemma. Recall Definition~\ref{def:incon_edges}, which introduced inconsistent edges. 

\begin{lemma} 
Let $b_\mathcal{C}$ 
denote the number of inconsistent edges of multiplicity one 
on a circuit 
$\gamma = (\gamma_1, \ldots, \gamma_{2k})$ 
consisting of $2k$ non-backtracking walks of length $\ell$ each. 
It holds that
$ b_\mathcal{C} \leq 4 (k + \chi)$, 
where $\chi=\nedge-\nvertex+1$.
\label{lem:inconsistent}
\end{lemma}
\begin{proof}
Let $\{e,f\}$ be an inconsistent edge of multiplicity one, where
$e$ and $f$ are its half edges.
For inconsistency and without loss of generality,
there must exist 
another edge
$\{e,f'\}$ in $\gamma$,
so that
$m_\gamma (e) \neq 1$.
We may assume that $\{e,f\}$ is traversed before $\{e,f'\}$. 
Let $v$ be the vertex of $e$ and consider the two possible scenarios:
\begin{enumerate}[{Case} 1:]
    \item There is no cycle containing $v$ in $\gamma$.
    Then the edge $\{e,f\}$ may only be inconsistent
    if $v$ is visited 
    at the end of one of the $2k$ non-backtracking walks $\gamma_i$ and
    $\{e,f'\}$ is at the beginning of $\gamma_{i+1}$. 
    Hence, in this case we have at most two inconsistent edges of multiplicity one. 
    This yields at most $4k$ such edges.
    \item There is a cycle passing through $v$. 
    For each such cycle there is an edge that does not belong to the tree 
    $T_{\gamma}$ (defined in Section~\ref{sec:path_counting}). 
    Furthermore, each cycle creates at most four inconsistent edges. 
    Combining these two facts we get at most $4\chi$ and the proof follows.
\end{enumerate}
\end{proof}

\begin{lemma}\label{lem:inconsistent_edges_reminder}
Consider a circuit $\gamma = (\gamma_1, \ldots, \gamma_{2k})$
with $\gamma_s \in T^{\ell,j}$ for $j \in [2k]$,
with $\gamma_s$ decomposed as $\gamma_s', \gamma_s'', \gamma_s'''$
as in the Definition~\ref{def:tangled_paths}.
Let 
$b_\mathcal{D}$ 
denote the number of inconsistent edges of multiplicity one 
in the union of segments
$\bigcup_{s=1}^{2k} (\gamma_s' \cup \gamma_s'')$.
Then 
$b_\mathcal{D} 
\leq 
16k+4 \chi$, 
where
$\chi = \nedge - \nvertex + 1$.
\label{lem:inconsistent_tangled}
\end{lemma}

\begin{proof}
The argument is similar to the above; however, now there are $4k$ segments
in $\bar\gamma = \bigcup_{s=1}^{2k} (\gamma_s' \cup \gamma_s'')$,
counting $\gamma_s'$ and $\gamma_s''$ separately.
As above, each of these $4k$ segments may yield at most 2 inconsistent edges.
Furthermore, the graph induced by $\bar\gamma$ may not be connected;
let $C$ be the number of connected components.
Each edge that creates a cycle may yield at most 4 inconsistent edges,
and there are at most $\nedge - \nvertex + C$ non-tree edges.
Then, we have that
\[
b_\mathcal{D} 
\leq 8 k + 4 (\nedge - \nvertex + C) 
\leq 8k + 4 (\nedge - \nvertex + 2k)
\leq 16 k + 4 \chi,
\]
as claimed.
\end{proof}

% \textcolor{red}{Is the last equality an inequality? (because of the $-1$ on $\chi$). Also, the proof yields a 16, why do we want a 24? I think the 16 is fine.}
% \todo{For some reason Bordenave, Lemma 21 has 
% the constant 24, 
% do you see why he gets this constant? The constant doesn't end up mattering, however.} \textcolor{red}{He uses $p-1< p\leq 2m$ and gets the 24. In our case we are fine with $16$, like you already pointed out.}

\subsection{Bounds on the norm of $\bar{B}^{\ell}$ and $R^{\ell,j}$.}
\label{sec:norm_bounds}

All of the ingredients are gathered to bound the matrix norms.

\begin{theorem}
\label{spec_B_bar}
Let 
$\ell = \lfloor c\log(n) \rfloor$ 
where $c < \frac{1}{32}$ 
is a universal constant. 
It holds that
\[
\|\bar{B}^{(\ell)}\|
\leq 
\left((d_1-1)(d_2-1)\right)^{\ell/4}
\exp( \log^{3/4} n )
\]
asymptotically almost surely.
\end{theorem}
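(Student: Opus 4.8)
The plan is to estimate a high moment of $\|\bar B^{\ell}\|$ by the trace method, working on the event that $G$ is $\ell$-tangle-free (which holds a.a.s.\ by the lemma of Section~\ref{sec:tangle_free} when the universal constant $c$ is small enough), and to finish with Markov's inequality. Starting from Eqn.~\eqref{eq:trace_bound},
\[
\E\bigl(\|\bar B^{\ell}\|^{2k}\bigr)\le\E\Bigl(\mathrm{Tr}\bigl((\bar B^{\ell})(\bar B^{\ell})^{*}\bigr)^{k}\Bigr)=\sum_{\gamma}\E(X_{\gamma}),
\]
where $\gamma$ ranges over the closed circuits obtained by concatenating $2k$ non-backtracking, $\ell$-tangle-free walks of length $\ell$ (the alternation of edge orientations coming from the transposes is irrelevant since $G$ is undirected; see Figure~\ref{fig:circuit}). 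Sorting circuits by $(\nvertex,\nedge,\nvright)=(|V(\gamma)|,|E(\gamma)|,|V(\gamma)\cap V_{2}|)$ and writing $\chi=\nedge-\nvertex+1$ for the tree excess, I would bound
\[
\E\bigl(\|\bar B^{\ell}\|^{2k}\bigr)\le\sum_{\nvertex,\nedge,\nvright}C_{\nvertex,\nedge}^{\nvright}\,\max_{\gamma\in\mathcal C_{\nvertex,\nedge}^{\nvright}}\E(X_{\gamma}),
\]
and then feed in the path count of Lemma~\ref{lem:circuit_count} and the expectation bounds of Theorem~\ref{expectation_bound}.

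The dominant contribution should come from the ``doubled-tree'' circuits: $\chi=0$, every edge (and hence every two-path) traversed exactly twice, so $\nedge\le k\ell$ and Case~(a) of Theorem~\ref{expectation_bound} applies with $\omega=0$. For such $\gamma$ the $m^{\nvright}n^{\nvertex-\nvright}$ choices of vertices cancel the $1/(m^{\nvright}n^{\nedge-\nvright})$ in $\E(X_{\gamma})$, leaving $n^{\nvertex-\nedge}=n$ times $(d_{1}-1)^{\nvright}(d_{2}-1)^{\nedge-\nvright}$ times the polylogarithmic multiplicity $(2\ell)^{4k}\bigl((\nvertex+1)^{2}(\ell+1)\bigr)^{2k}$. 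The crucial structural point --- and the source of the exponent $1/4$, i.e.\ a \emph{geometric} rather than arithmetic mean of $d_{1}-1$ and $d_{2}-1$ --- is that, because the walks are non-backtracking, every leaf of the discovery tree $\mathcal T_{\gamma}$ must coincide with one of the $\le 2k$ concatenation vertices; hence $\mathcal T_{\gamma}$ splits into $O(k)$ side-alternating paths and the bipartite imbalance satisfies $|\nvright-\tfrac12\nedge|=O(k)$. Thus $(d_{1}-1)^{\nvright}(d_{2}-1)^{\nedge-\nvright}\le\bigl((d_{1}-1)(d_{2}-1)\bigr)^{\nedge/2}d^{\,O(k)}\le\bigl((d_{1}-1)(d_{2}-1)\bigr)^{k\ell/2}d^{\,O(k)}$, and summing over the $O((k\ell)^{3})$ relevant triples yields a leading estimate of the form $n\,(k\log n)^{O(k)}\bigl((d_{1}-1)(d_{2}-1)\bigr)^{k\ell/2}$.

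Next I would argue that all other circuits are of lower order. A circuit with $\chi\ge 1$ gains $n^{1-\chi}$ but inflates the multiplicity to $\bigl((\nvertex+1)^{2}(\ell+1)\bigr)^{2k(\chi+1)}=(k\log n)^{O(k\chi)}$; for $\ell\le c\log n$ this makes each extra unit of $\chi$ cost a net $n^{-\Omega(1)}$ as soon as $k=O(\log n/\log\log n)$. A circuit with some two-path traversed only once falls under Case~(b): the once-traversed two-paths, at most $\sim 6d^{2}\omega$ of them, push $\nedge$ above $k\ell$ by only $O(d^{2}\omega)$, so $\bigl((d_{1}-1)(d_{2}-1)\bigr)^{\nedge/2}$ grows by at most a \emph{constant} factor per unit of $\omega$, while Theorem~\ref{expectation_bound}(b) contributes the compensating $(\nedge/n)^{\omega}=(O(k\log n)/n)^{\omega}$; since $d=\max(d_{1},d_{2})$ is fixed, each unit of $\omega$ is again a net $n^{-\Omega(1)}$. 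Altogether $\E(\|\bar B^{\ell}\|^{2k})\le n\,(\log n)^{O(k)}\bigl((d_{1}-1)(d_{2}-1)\bigr)^{k\ell/2}$; choosing $k=\Theta(\log n/\log\log n)$ makes $n^{1/2k}=(\log n)^{O(1)}$, and Markov's inequality gives
\[
\P\Bigl(\|\bar B^{\ell}\|>(\log n)^{15}\bigl((d_{1}-1)(d_{2}-1)\bigr)^{\ell/4}\Bigr)\le\frac{n\,(\log n)^{O(k)}}{(\log n)^{30k}},
\]
which tends to $0$ once the constant in the exponent $O(k)$ is tracked and found to be below $30$; intersecting with the tangle-free event proves the claim.

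The step I expect to be genuinely delicate is the error analysis just sketched: the multiplicity in Lemma~\ref{lem:circuit_count} grows like a fixed power of $k\log n$ raised to the power $k\chi$, so the admissible range of $k$ is narrow --- large enough to keep the unavoidable prefactor $n^{1/2k}$ below $(\log n)^{15}$, yet small enough to keep the $\chi\ge 1$ and $\omega\ge 1$ circuits subdominant --- and checking that this range is nonempty, and that the resulting power of $\log n$ is at most $15$, requires propagating the constants through the sharp expectation bound Eqn.~\eqref{eqn:bound_strong} rather than the crude relaxation Eqn.~\eqref{exp_bound_ineq_b}. A secondary point that must be handled carefully is the uniform bipartite-balance estimate $|\nvright-\tfrac12\nedge|=O(k+\chi)$, which is exactly what prevents the base $\bigl((d_{1}-1)(d_{2}-1)\bigr)^{1/2}$ from degrading to $\max(d_{1}-1,d_{2}-1)$.
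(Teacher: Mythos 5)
Your proposal follows the paper's own proof closely: the trace bound of Eqn.~\eqref{eq:trace_bound}, the grouping of circuits by $(\nvertex,\nedge,\nvright)$, the path count of Lemma~\ref{lem:circuit_count}, the expectation bound of Theorem~\ref{expectation_bound}, the choice $k=\Theta(\log n/\log\log n)$, and Markov's inequality --- and the outline is correct. You also make explicit a structural step the paper elides: in passing from Eqn.~\eqref{bound:I_j} to Eqn.~\eqref{bound:I_1} the paper asserts that the dominant doubled-tree term has $D_*=\bigl((d_1-1)(d_2-1)\bigr)^{\lceil k\ell/2\rceil}$, which silently uses $\nvright\approx\nedge/2$ (otherwise the bound only gives $\max(d_1-1,d_2-1)^{k\ell}$, which does not close the Markov step at exponent $1/4$ when $d_1\neq d_2$); your observation that in the doubled-tree case every leaf of $\mathcal T_{\gamma}$ must be one of the $\le 2k$ concatenation vertices, so that $\mathcal T_{\gamma}$ decomposes into $O(k)$ side-alternating paths and $|\nvright-\tfrac{1}{2}\nedge|=O(k)$, is exactly the needed justification and, as you say, is what keeps the base at $\sqrt{(d_1-1)(d_2-1)}$. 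Your caution about the error terms is likewise warranted: taken literally, the Case~(b) relaxation $D_*\le d^{\nedge}$ gives $I_2,I_3\lesssim(\log n)^{O(k)}d^{k\ell}$, and since $\bigl(d/\sqrt{(d_1-1)(d_2-1)}\bigr)^{k\ell}$ is superpolynomial in $n$ this does not vanish after dividing by $(\log n)^{30k}\bigl((d_1-1)(d_2-1)\bigr)^{k\ell/2}$; one must instead carry the sharper Eqn.~\eqref{eqn:bound_strong}, retaining the split $(d_1-1)^{\nvright}(d_2-1)^{\nedge-\nvright}$, together with the balance estimate extended to $|\nvright-\tfrac{1}{2}\nedge|=O(k+\chi)$ for the non-tree circuits --- precisely the two items you flag as the delicate parts.
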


\begin{proof}
The following holds for any natural number $k$, but for our proof, we will take
\begin{equation}
 k = \lfloor \log (n)^{1/3} \rfloor \quad \mbox{ and } \quad
 \ell = \lfloor c \log n \rfloor \mbox{ for some } c < \frac{1}{32}.
\end{equation}
We have
\begin{align}
\label{trace_expectation}
\E \left(\| \bar{B}^{(\ell)} \|^{2k} \right)
\leq 
\E
\left( 
\mathrm{Tr} 
 \left[ 
 \left(
 \bar{B}^{(\ell)} \bar{B}^{(\ell)^*} 
 \right)^{k} 
 \right]
\right)
=
\E\left( 
\sum_{\gamma \in \mathcal{C}} 
\prod_{t=1}^{2 k \ell} \bar{B}_{e_{2t-1} e_{2t+1}}
\right).
\end{align}
The sum is taken over the set $\mathcal{C}$ of all circuits 
$\gamma$
of length $2k\ell$,
where
$\gamma= ( \gamma_1, \gamma_2, \ldots, \gamma_{2k} )$ 
is formed by concatenation of $2k$ tangle-free segments
$\gamma_s \in F^{\ell}$, 
with the convention $e^{s+1}_1=e^s_{\ell+1}$.
Again, refer to Figure~\ref{fig:circuit} for clarification.

As in Section~\ref{sec:path_counting}, we will break these into circuits
which visit exactly 
$\nvertex = |V(\gamma)|$ different vertices, 
$\nvright = |V(\gamma) \cap V_2|$ of them in the right set, and
$\nedge = |E(\gamma)|$ different edges.
We define three disjoint sets of circuits:
\begin{align*}
%\begin{itemize}
	%\item 
    \mathcal{C}_1 & =
    \{\gamma \in \mathcal{C}:~\mbox{all edges in $\gamma$ are traversed at least twice} \}, \\
	%\item 
    \mathcal{C}_2 & =
    \{\gamma \in \mathcal{C}:~\mbox{at least one edge in $\gamma$ is traversed exactly once 
    and $\nvertex \leq kl+1$} \},~\mbox{and} \\
	%\item 
    \mathcal{C}_3 & =
    \{\gamma \in \mathcal{C}:~\mbox{at least one edge in $\gamma$ is traversed exactly once 
    and $\nvertex > kl+1$} \} .
\end{align*}
Define the quantities
\[
I_j
= 
\left|
\E
  \sum_{\gamma\in \mathcal{C}_j} 
  \prod_{t=1}^{2k\ell} \bar{B}_{e_{2t-1} e_{2t+1}} 
\right|
\leq
\sum_{\gamma \in \mathcal{C}_j}
\left|
\E
\prod_{t=1}^{2k\ell}
\bar{M}_{e_{2t-1} e_{2t}}
\right|
\]
for $j = 1, 2$ and 3, so that
\eqref{trace_expectation}
can be bounded as
\begin{align}\label{bound:I1+I2+I3}
\E\left(\|\bar{B}^{(\ell)}\|^{2k}\right)\leq I_1+I_2+I_3.
\end{align}

We will bound each term on the right hand side above.
The reason for this division is that, 
by Theorem~\ref{expectation_bound}, 
when we have any two-path traversed exactly once,
the expectation of the corresponding circuit is smaller,
because the matrix $\bar{B}$ is nearly centered.
We will see that the leading order terms in Eqn.~\eqref{trace_expectation} 
will come from circuits in $\mathcal{C}_1$. 
From Lemmas~\ref{expectation_bound} and
\ref{lem:circuit_count} and Corollary~\ref{cor:edge_iso}, we get that
\begin{align}
\nonumber
I_j
&\leq
\sum_{\nvertex,\nedge,\nvright}
\sum_{\gamma\in \mathcal{C}_j \cap \mathcal{C}_{\nvertex,\nedge}^\nvright} 
\left|
\mathbb{E} 
\prod_{t=1}^{2k\ell} \bar{M}_{e_{2t-1} e_{2t}}
\right|
\\
&\leq
\sum_{\nvertex,\nedge,\nvright}
C_{\nvertex,\nedge}^\nvright \,     
I_{\nvertex, \nedge}^\nvright \,
\left|
\mathbb{E} 
\prod_{t=1}^{2k\ell} \bar{M}_{e_{2t-1} e_{2t}}
\right|
\nonumber
\\
&\leq
\sum_{\nvertex,\nedge,\nvright}
n^{\nvertex - \nvright}
m^{\nvright}
(2\ell)^{4k} ((\nvertex + 1)^2 (\ell + 1))^{2k(\chi + 1)} \nonumber \\
& \qquad\qquad
\cdot
(d_1 (d_1 - 1))^{\nvertex - \nvright}
(d_2 (d_2 - 1))^\nvright
(d - 1)^{2(\chi - 1)} \nonumber \\
& \qquad\qquad
\cdot 
C
2^{b_\mathcal{C}} \left( \frac{1}{|E|} \right)^\nedge
\left( \frac{6 k \ell}{\sqrt{|E|}} \right)^{\nedge_1} \nonumber \\
&\leq
C
\sum_{\nvertex,\nedge,\nvright}
(2\ell)^{4k} ((\nvertex + 1)^2 (\ell + 1))^{2k(\chi + 1)}
(d_1 - 1)^{\nvertex - \nvright}
(d_2 - 1)^\nvright
(d - 1)^{2(\chi - 1)} \nonumber \\
& \qquad\qquad
\cdot 
2^{b_\mathcal{C}} 
\left( \frac{1}{|E|} \right)^{\nedge - \nvertex}
\left( \frac{6 k \ell}{\sqrt{|E|}} \right)^{\nedge_1} \nonumber \\
&\leq
C
\sum_{\nvertex,\nedge,\nvright}
(2\ell)^{4k} ((\nvertex + 1)^2 (\ell + 1))^{2k(\chi + 1)}
\left( (d_1 - 1)(d_2 - 1) \right)^{\nedge / 2}
\nonumber \\
& \qquad\qquad
\cdot
\left( \frac{d_2 - 1}{d_1 - 1} \right)^\psi
(d_1 - 1)^{1-\chi}
(d - 1)^{2(\chi - 1)}
2^{b_\mathcal{C}}
\left( \frac{1}{|E|} \right)^{\nedge - \nvertex}
\left( \frac{6 k \ell}{\sqrt{|E|}} \right)^{\nedge_1} 
\nonumber \\
&\leq
c_0 n
\ell^{4k} 
c_1^{k^2}
c_2^k
\sum_{\nvertex,\nedge}
\alpha^{2\nedge}
\nvertex
((\nvertex + 1)^2 (\ell + 1))^{2k(\chi + 1)}
\left( \frac{c_3}{n} \right)^\chi
\left( \frac{c_4 k \ell}{\sqrt{n}} \right)^{\nedge_1} 
\label{bound:I_j}
\end{align}
We use $C, c_0, c_1, c_2, c_3, c_4$ to denote constant terms
and set $\alpha = ((d_1-1)(d_2-1))^{1/4}$.
In the last line we used Lemma~\ref{lem:imbalance}
and Lemma~\ref{lem:inconsistent}
to bound 
$\psi$ and $b_\mathcal{C}$ 
in terms of 
$k$ and $\chi$
and remove the sum over $\nvright$,
which contains at most $\nvertex$ terms.
We will use Eqn.~\ref{bound:I_j} to bound each $I_j$. 

% Now we will separate the two sums and change variables to $\chi$,
% yielding \textcolor{red}{(check what bound we got for $b$.)}
% \begin{equation}
% I_j \leq
% c_0 n
% \ell^{4k} 
% c_1^{k^2}
% c_2^k
% \sum_{\nvertex}
% \alpha^{2 (\nvertex - 1)}
% \nvertex
% \sum_\chi
% ((\nvertex + 1)^2 (\ell + 1))^{2k(\chi + 1)}
% \left( \frac{c_3}{n} \right)^\chi
% \left( \frac{c_4 k \ell}{\sqrt{n}} \right)^{\nedge_1} .
%     \label{bound:I_j}
% \end{equation}
% \textcolor{red}{My only suggestion will be to omit equation 23.}
% \todo{I was thinking to absorb this into $c_3$}
% 

\subsubsection{Bounding $I_1$}

Here, $\nedge_1 = 0$ since every edge is traversed twice.
% In all cases, each circuit traverses $2k \ell$ \ps. 
% Hence, for each $\gamma\in \mathcal{C}_1$, 
% where each \2 is repeated twice,
% we have at most $k\ell$ different \ps. 
% Furthermore, since each edge can be in multiple \ps, 
% we have that the total number of different \ps 
% ~is greater than or equal to the total number of edges traversed by $\gamma$. 
We then have that 
$\nvertex - 1 \leq \nedge \leq k\ell$. 
Since $\gamma$ is connected,
we have 
$1 \leq \nvertex \leq k\ell+1$. 
Thus, on the right hand side of 
Eqn.~\eqref{bound:I_j} we get 
\begin{align*}
I_1 
&\leq 
c_0 n
\ell^{4k} 
c_1^{ k^2}
c_2^k
\sum_{\nvertex = 1}^{k\ell+1} 
\sum_{\nedge=\nvertex-1}^{k\ell} 
\alpha^{2\nedge}
\nvertex
((\nvertex + 1)^2 (\ell + 1))^{2k(\chi + 1)}
\left( \frac{c_3}{n} \right)^\chi
\\
&=
c_0 n
\ell^{4k} 
c_1^{ k^2}
c_2^k
\sum_{\nvertex = 1}^{k\ell+1} 
\alpha^{2(\nvertex - 1)}
\nvertex
((\nvertex + 1)^2 (\ell + 1))^{2k}
\sum_{\chi= 0}^{k\ell - \nvertex + 1} 
\left( 
    \alpha^2
    ((\nvertex + 1)^2 (\ell + 1))^{2k} 
    \frac{c_3}{n} 
\right)^\chi
\end{align*}
The second sum
is upper-bounded by 
$
\sum_{\chi=0}^\infty 
\left( C \frac{ ((\nvertex+1)^2 (\ell+1) )^{2k} }{n} \right)^\chi
$,
where $C$ is some constant,
which we show next is bounded by a common constant for our choices of $k$ and $\ell$ 
and all $\nvertex \in [k\ell + 1]$. 
To see this, it will suffice to show that 
$((\nvertex+1)^2 (\ell+1) )^{2k}=o(n)$. 
But $((\nvertex+1)^2 (\ell+1) ) = O(k^2 \ell^3)$
which, for our choices of $k$ and $\ell$, yields
\[
2k \log( (\nvertex+1)^2 (\ell+1) )=O({\log(n)^{1/3}}\log(\log(n)))=o(\log(n))
\]
as desired.

Finally, the first summand is maximized for $\nvertex = k\ell + 1$ and there are at $k\ell + 1$ many terms in that sum.
Therefore, modifying the constant $c_0$ yields
\begin{align}\label{bound:I_1}
I_1
\leq 
c_0' n 
\ell^{4k}
c_1^{ k^2}
c_2^k
(k \ell + 1)^2
\left( (k\ell+2)^2 (\ell+1) \right)^{2k}
\alpha^{2 k \ell}.
\end{align}

\subsubsection{Bounding $I_2$}

Here there is at least one edge traversed exactly once, so
we have $\nedge \geq \nvertex$ for $\gamma\in \mathcal{C}_2$. 
Taking $\nedge_1 = 0$ only increases the right hand side on
Eqn.~\eqref{bound:I_j}; it becomes
\begin{align*}
I_2
&\leq 
c_0 n
\ell^{4k} 
c_1^{ k^2}
c_2^k
\sum_{\nvertex=1}^{k\ell+1} \sum_{\nedge=\nvertex}^{2k\ell} 
\alpha^{2\nedge}
\nvertex
((\nvertex + 1)^2 (\ell + 1))^{2k(\chi + 1)}
\left( \frac{c_3}{n} \right)^\chi
\\
&=
c_0 n
\ell^{4k} 
c_1^{ k^2}
c_2^k
\sum_{\nvertex = 1}^{k\ell+1} 
\alpha^{2(\nvertex-1)}
\nvertex
((\nvertex + 1)^2 (\ell + 1))^{2k}
\sum_{\chi= 1}^{2k\ell - \nvertex + 1} 
\left( 
    \alpha^2
    ((\nvertex + 1)^2 (\ell + 1))^{2k} 
    \frac{c_3}{n} 
\right)^\chi
\end{align*}
Notice that this last term is almost identical to the one in the bound of $I_1$, 
except that now we start the second sum at $\chi=1$,
which leads to an extra factor of
$
O( ((\nvertex + 1)^2 (\ell + 1))^{2k} / n )
$.
This allow us to factor out another geometric series
and proceed as we did for $I_1$. 
This yields 
\begin{align}
I_2 
\leq 
c_0'
\ell^{4k}
c_1^{ k^2}
c_2^k
(k \ell+1)^2
\left( (k\ell+2)^2 (\ell+1) \right)^{4k}
\alpha^{2k\ell},
\label{bound:I_2}
\end{align} 
since there are $k\ell+1$ terms in the first sum.

\subsubsection{Bounding $I_3$}

This set will require more delicate treatment,
since circuits in $\mathcal{C}_3$ 
visit potentially many vertices and edges, 
yet we need to keep the power of $\alpha$ at most $2k\ell$.

We first show that, in this case, $\nedge_1$ is also large. 
We have 
$\nedge \geq \nvertex$,
and let $\nvertex = k\ell + t$.
Define
$\nedge_1'$ as the number of edges 
traversed once in $\gamma$, 
so that 
$\nedge_1' = b + \nedge_1$.
Since $\gamma$ has length $2k \ell$, we deduce that
$2 (\nedge - \nedge'_1) + \nedge'_1 \leq 2k\ell$,
which implies that $\nedge'_1 \geq 2t$.
Finally, Lemma~\ref{lem:inconsistent}
yields
$\nedge_1 \geq (2t - 4 (\chi + k))_+$.
Eqn.~\eqref{bound:I_j} then gives,
\begin{align*}
I_3 
&\leq 
c_0 n
\ell^{4k} 
c_1^{ k^2}
c_2^k
\sum_{\nvertex=k\ell+1}^{2k\ell} 
\sum_{\nedge=\nvertex}^{2k\ell} 
\alpha^{2\nedge}
\nvertex
((\nvertex + 1)^2 (\ell + 1))^{2k(\chi + 1)}
%& \qquad\qquad \cdot
\left( \frac{c_3}{n} \right)^\chi 
\left( \frac{(c_4 k \ell)^2}{n} \right)^
{(\nvertex - k\ell - 2(\chi+k))_+} 
\nonumber \\
&=
c_0 n
\ell^{4k} 
c_1^{ k^2}
c_2^k
\sum_{t=1}^{k\ell} 
\sum_{\chi=1}^{k\ell - t+1} 
\alpha^{2 (k\ell + \chi + t-1)}
(k\ell + t)
((k\ell + t + 1)^2 (\ell + 1))^{2k(\chi + 1)}\\ 
& \qquad\qquad\qquad\qquad\qquad\qquad\qquad\qquad\qquad\qquad \cdot
\left( \frac{c_3}{n} \right)^{\chi} 
\left( \frac{(c_4 k \ell)^2}{n} \right)^{(t - 2(\chi+k))_+}
\nonumber \\
&=
c_0
n
\ell^{4k} 
c_1^{ k^2}
c_2^k
\alpha^{2 (k\ell-1)}
% \nonumber \\
% & \qquad\qquad \cdot
\sum_{t=1}^{k\ell} 
(k\ell + t)
((k\ell + t + 1)^2 (\ell + 1))^{2k}
\alpha^{2t} 
\\
& \qquad\qquad\qquad\qquad\qquad\qquad \cdot
\sum_{\chi=1}^{k\ell - t+1}
\left( \alpha^2 ((k\ell + t + 1)^2 (\ell + 1))^{2k} \frac{c_3}{n} \right)^{\chi} 
% \left( C'
% \frac{ (C_{\nvertex}\ell)^{2k}}{n}
% \right)^{\chi} 
\left( \frac{(c_4 k \ell)^2}{n} \right)^{(t - 2(\chi+k))_+}.
\end{align*}

To simplify our notation, we will write 
\[
F(k,\ell)=c_0
n
\ell^{4k} 
c_1^{ k^2}
c_2^k
\alpha^{2 (k\ell-1)}
(2k\ell)
((2k\ell + 1)^2 (\ell + 1))^{2k}.
\]
Observe now that we can write:
\[
I_3\leq F(k,\ell)\sum_{t=1}^{k\ell} \alpha^{2t}\sum_{\chi=1}^{k\ell-t+1} \left(\frac{c(k,\ell)}{n}\right)^{\chi} \left(\frac{(c_4k\ell)^2}{n}\right)^{(t-2k-2\chi)_+}
\]
where $c(k,\ell)=c_3\alpha^2((2k\ell+1)^2(\ell+1))^{2k}$.
To bound the double sum on the right hand side above, 
we start by removing a factor of $\frac{c(k,\ell)}{n}$, 
whichs leaves
\begin{equation}\label{I_3bound}
I_3\leq F(k,\ell)\frac{c(k,\ell)}{n} \sum_{t=1}^{k\ell} \alpha^{2t}\sum_{\chi=0}^{k\ell-t} \left(\frac{c(k,\ell)}{n}\right)^{\chi} \left(\frac{(c_4k\ell)^2}{n}\right)^{(t-2k-2-2\chi)_+}
\end{equation}
The $n$ in the denominator is crucial to cancel the linear term in $F(k,\ell)$, keeping the upper bound for $I_3$ small. We focus on bounding the double sum. We split the sum in $t$ in two parts.\\
{\bf Case 1: $t< 2k+2$.} For these values of $t$, we have $(t-2k-2-2\chi)_+=0$, hence
\begin{equation}\label{I_3:eq1}
\sum_{t=1}^{2k+1} \alpha^{2t}\sum_{\chi=0}^{k\ell-t} \left(\frac{c(k,\ell)}{n}\right)^{\chi} \left(\frac{(c_4k\ell)^2}{n}\right)^{(t-2k-2-2\chi)_+}=\sum_{t=1}^{2k+1} \alpha^{2t}\sum_{\chi=0}^{k\ell-t} \left(\frac{c(k,\ell)}{n}\right)^{\chi}=O(\alpha^{4k})
\end{equation}
where the last equality uses again the same geometric upper bound for the sum over $\chi$.\\
{\bf Case 2: $t\geq 2k+2$.} 
We split the second sum, from $\chi=0$ to $N=\lfloor t/2-k-1\rfloor$ and
the terms with $\chi > N$ and analyse the two separately. 
The first can be upper bounded by
\[
\sum_{t=2k+2}^{k\ell} \alpha^{2t}\sum_{\chi=0}^{N} 
\left(\frac{c(k,\ell)}{n}\right)^{\chi} 
\left(\frac{c_4 k \ell}{\sqrt{n}}\right)^{(t/2-k-1-\chi)}
\leq
\alpha^{4k+4} \sum_{t=0}^{k\ell-2k-2}
\alpha^{2t} (N+1)
\left( \frac{c_4 k \ell}{\sqrt{n}} \right)^{\frac{t}{2}}.
\]
The last inequality can be checked in two steps: 
We first factor out the power of $\alpha^{4k+4}$, 
and then use that 
$\frac{c(k,\ell)}{n}\leq \frac{c_4 k \ell}{\sqrt{n}}$,
which holds for large enough $n$,
to simplify the second sum to the addition of $N+1$ equal terms. 
To bound the right hand side, 
we one more time upper bound by a 
geometric series of ratio less than one to get
\begin{equation}\label{I_3eq2}
\sum_{t=2k+2}^{k\ell} \alpha^{2t}\sum_{\chi=0}^{N} 
\left(\frac{c(k,\ell)}{n}\right)^{\chi} 
\left(\frac{c_4 k\ell}{\sqrt{n}}\right)^{(t/2-k-1-\chi)}
\leq
O(k\ell\alpha^{4k}).
\end{equation}
We are left with the terms $\chi > N$. In this case we get $(t-2k-2-2\chi)_+=0$, so
\[
\sum_{t=2k+2}^{k\ell} \alpha^{2t}\sum_{\chi=N+1}^{k\ell-t}
\left(\frac{c(k,\ell)}{n}\right)^{\chi} 
\left(\frac{c_4 k \ell}{\sqrt{n}}\right)^{(t/2-k-1-\chi)_+}
=
\sum_{t=2k+2}^{k\ell} \alpha^{2t}\sum_{\chi=N+1}^{k\ell-t} \left(\frac{c(k,\ell)}{n}\right)^{\chi}.
\]
The sum over $\chi$ is of the order of $\left(\frac{c(k,\ell)}{n}\right)^{N+1}\leq \left(\frac{c(k,\ell)}{n}\right)^{t/2-k-1}$. 
Substituting this into the above, we are left with
\[
\sum_{t=2k+2}^{k\ell} \alpha^{2t}\sum_{\chi=N+1}^{k\ell-t} \left(\frac{c(k,\ell)}{n}\right)^{\chi}
=
C\sum_{t=2k+2}^{k\ell} \alpha^{2t}\left(\frac{c(k,\ell)}{n}\right)^{t/2-k-1}
\]
for some universal constant $C$. After factoring $\alpha^{4k+4}$ and changing variables in the summation, we conclude that
\begin{equation}\label{I_3eq3}
\sum_{t=2k+2}^{k\ell} \alpha^{2t}\sum_{\chi=N+1}^{k\ell-t} \left(\frac{c(k,\ell)}{n}\right)^{\chi}
=
C\alpha^{4k+4}\sum_{t=0}^{k\ell-2k-2} \alpha^{2t}\left(\frac{c(k,\ell)}{n}\right)^{t/2}
=
O(\alpha^{4k}).
\end{equation}

Using \eqref{I_3bound} and the results for
case 1 \eqref{I_3:eq1} and case 2 (\ref{I_3eq2} and \ref{I_3eq3}),
we conclude that
\begin{equation}\label{I_3final_bound}
I_3\leq F(k,\ell)\frac{c(k,\ell)}{n}O(k\ell \alpha^{4k})
=
c_0'\ell^{4k} 
c_1^{ k^2}
(c_2')^k
(k\ell)^2
((2k\ell + 1)^2 (\ell + 1))^{4k}
\alpha^{2k\ell}
\end{equation}

\subsubsection{Finishing the proof of Theorem~\ref{spec_B_bar}}

We have bounded the three pieces we need to prove the theorem. 
From \eqref{bound:I_1}, \eqref{bound:I_2}, and \eqref{I_3final_bound},
with $n$ sufficiently large, we get
\begin{align*}
\E \left( \| \bar{B}^{(\ell)} \|^{2k}\right)
&\leq 
I_1 + I_2 + I_3  \\
&\leq 
\alpha^{2k\ell}
n
\cdot
C
c_1^{ k^2}
c_5^k
\ell^{4k}
(k \ell)^{4}
\left(k^2\ell^3 \right)^{4k}
\\
&\leq
\alpha^{2k\ell} 
n
\cdot
C'
c_1^{(\log n)^{2/3}}
c_6^{(\log n)^{1/3}}
(\log n)^{4 (\log n)^{1/3}}
(\log n)^{16/3}
\,
((\log n)^{11/3})^{4 (\log n)^{1/3}}
\\
&\leq
\alpha^{2k\ell} 
n
\cdot
C''
c_1^{(\log n)^{2/3}}
c_6^{(\log n)^{1/3}}
(\log n)^{20 (\log n)^{1/3} + 6} 
\\
& := 
\alpha^{2k\ell} n \cdot f(n) ,
\end{align*}
where $c_5, c_6, C, C', C''$ are universal constants.

Take any $\epsilon > 0$.
It can be checked that 
$(\log n)^{20 (\log n)^{1/3} + 6}= o(n^{\epsilon})$, and 
$f(n) = o(n^{\epsilon})$ as well.
Let $g(n) = \exp((\log n)^{3/4})$;
then $g(n) = o(n^\epsilon)$ but
$g(n)^{2k} \gg n^\epsilon$.
We apply Markov's inequality, so that
\begin{align}
\P \left[
\| B^{(\ell)} \| 
\geq 
\alpha^{\ell} g(n)
\right]
&\leq 
\frac{ \E \left( \| \bar{B}^{(\ell)} \|^{2k} \right) }{ \alpha^{2k\ell} g(n)^{2k}}
\leq  
\frac{n f(n)}{g(n)^{2k}}=o(1),
\label{eq:f_over_g}
\end{align}
which is the statement of the theorem.
\end{proof}

\begin{theorem}
\label{spec_R}
Let $1\leq j\leq \ell = \lfloor c \log(n) \rfloor$ 
where $c < \frac{1}{32}$ is a universal constant.
Then
\[
\|R^{\ell,j}\| 
\leq 
\frac{(d-1)^\ell}{n} \exp((\log n)^{3/4})
\]
asymptotically almost surely.
\end{theorem}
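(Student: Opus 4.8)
The plan is to prove Theorem~\ref{spec_R} by mimicking the proof of Theorem~\ref{spec_B_bar}: apply the trace method, but exploit two features that are special to the remainder matrix. For any integer $k$ I would start from
\[
\E\left( \| R^{\ell,j} \|^{2k} \right) \leq \E\left( \mathrm{Tr}\left( (R^{\ell,j})(R^{\ell,j})^* \right)^{k} \right) = \sum_{\gamma} \E\left( W_{\gamma} \right),
\]
where $\gamma = (\gamma_1,\ldots,\gamma_{2k})$ is a circuit obtained by concatenating $2k$ segments, each of which is one of the paths summed over in Eqn.~\eqref{error_term} --- a non-backtracking path of length $\ell+1$ in $K_{n,m}$ that is overall tangled and whose first $j$ and last $\ell-j$ edges form tangle-free subpaths of $G$ --- and where $W_\gamma = \prod_s\left( \prod_{i=1}^{j-1}\bar B_{e_i^s e_{i+1}^s}\cdot S_{e_j^s e_{j+1}^s}\cdot\prod_{i=j+1}^{\ell}B_{e_i^s e_{i+1}^s}\right)$ is the corresponding weight. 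As in Section~\ref{sec:norm_bounds} I would group the circuits by $\nvertex=|V(\gamma)|$, $\nvright=|V(\gamma)\cap V_2|$ and $\nedge=|E(\gamma)|$, and split them further according to whether or not every two-path is traversed at least twice.

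The first new ingredient is structural: a segment that is tangled but whose two sub-pieces of lengths $j$ and $\ell-j$ are tangle-free must have cycle excess at least two, and hence visits at most $\ell-1$ distinct vertices although it traverses $\ell$ edge-slots; this plays, for the remainder, the role that tangle-freeness played for $\bar B^\ell$, and in particular the underlying graph of $\gamma$ is never a forest. The second new ingredient is arithmetic: each of the $2k$ segments contributes one \emph{deterministic} factor $S_{e_j^s e_{j+1}^s}$, equal to $(d_1-1)/m$ or $(d_2-1)/n$ and hence of order $1/n$, so that these $2k$ factors together contribute $O(n^{-2k})$ (up to a constant power of $d$) on top of the naive ``one factor $d/n$ per distinct edge'' accounting; the near-centering of the $\bar B$-entries in positions $1,\ldots,j-1$, whose expectation is only $O(\nedge/n^2)$ when the corresponding two-path occurs once (Lemma~\ref{one_edge}), provides further improvements that I would simply discard.

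Next I would establish an expectation bound in the spirit of Theorem~\ref{expectation_bound} for the mixed product defining $W_\gamma$, by running the same tower-of-expectation argument over a suitable ordering of the undirected edges of $\gamma$: a genuine indicator entry $B_{ef}$ contributes at most $d/n$ (or the sharper bipartite-aware $\tfrac{d_i-1}{n_i}$), an entry $\bar B_{ef}$ contributes a factor of the same order when its two-path is traversed at least twice and $O(\nedge/n^2)$ otherwise, and each deterministic $S_{ef}$ contributes $\Theta(1/n)$. Carrying the $2k$ guaranteed $S$-factors separately yields a bound of the shape
\[
\E\left( W_\gamma \right) \leq \frac{d^{O(\nedge)}}{m^{\nvright}\, n^{\nedge-\nvright}}\cdot n^{-2k}\cdot(1+o(1)).
\]
I would then adapt the path-counting Lemma~\ref{lem:circuit_count} to circuits built from $2k$ \emph{tangled} segments, encoding each segment with one extra triple-level to account for its larger excess together with the markers for its cycles, so that the count keeps the form $m^{\nvright}n^{\nvertex-\nvright}(2\ell)^{O(k)}\left((\nvertex+1)^2(\ell+1)\right)^{O(k(\chi+1))}$ with $\chi=\nedge-\nvertex+1$, along the lines of the cycling-times argument of \citet{bordenave2015a}.

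Finally I would combine these estimates and sum over $\nvertex,\nedge,\nvright$ following the same case analysis used for Theorem~\ref{spec_B_bar}. The decisive point is that the factor $((d_1-1)(d_2-1))^{k\ell/2}$ which survived in the estimate of $\|\bar B^\ell\|$ --- where it cancelled against the normalization $((d_1-1)(d_2-1))^{\ell/4}$ --- is here more than offset by the $n^{-2k}$ windfall, so that $\E(\|R^{\ell,j}\|^{2k})$ ends up bounded by a fixed power of $n$ with no surviving $d$-to-the-$\Theta(k\ell)$ growth; taking $k=\lfloor\log(n)/\log\log(n)\rfloor$, so that the polylogarithmic factors become a fixed power of $n$, Markov's inequality gives $\P\left(\|R^{\ell,j}\|>\log(n)^{18}\right)\to0$. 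I expect the main obstacle to be exactly the one that is already delicate for $\bar B^\ell$: controlling the circuits with large tree excess $\chi$, for which the combinatorial count grows quickly; one must check that the excess forced by the tangledness of the segments, together with the $S$-windfall, keeps the powers of $d$ and of $\ell$ under control in that regime so that those circuits remain of lower order, just as the tangle-free structure did there for the main term.
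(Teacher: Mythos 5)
Your plan follows the paper's approach: trace method, adapted path counting, an expectation bound, then Markov's inequality with $k=\lfloor\log(n)/\log\log(n)\rfloor$, and you correctly identify the decisive mechanism --- the $2k$ deterministic $S$-entries, each of order $1/n$, supply an extra factor $O(n^{-2k})$ that absorbs the $d^{\Theta(k\ell)}$ growth from the uncentred $B$-entries. (A small slip: that growth is $d^{\sim 2k\ell}$, since the last $\ell-j$ edges of each segment are weighted by plain $B$-entries bounded by $d/n$, not $((d_1-1)(d_2-1))^{k\ell/2}$; the net cancellation $n\cdot(d^{\ell+1}/n)^{2k}\le n$ is as you anticipate because $\ell\le c\log_d n$.) You are also right that the centering benefit from the $\bar B$-entries can simply be discarded, as the paper does.

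Two points to fix. First, your ``first new ingredient'' --- that a tangled segment has cycle excess at least two, so $\gamma$ never induces a forest --- is a red herring: the paper's proof does not use it and still takes $\nvertex-\nedge=1$ as the extremal case; the $n^{-2k}$ windfall alone carries the argument, and no constraint beyond the one already present for $\bar B^\ell$ is imposed on $\nvertex,\nedge$. Second, what actually replaces the tangle-free property in the path encoding is already built into the definition of $T^{\ell,j}$: each segment splits at position $j$ into two tangle-free, non-backtracking subpaths of lengths $j$ and $\ell-j$. The clean adaptation of Lemma~\ref{lem:circuit_count} (and what the paper does) is to encode these two subpaths separately with the triple-and-markers scheme of Section~\ref{sec:path_counting}, giving at most $4\ell^4\left((\nvertex+1)^4(\ell+1)^2\right)^{\chi+1}$ encodings per segment and hence $D_{\nvertex,\nedge}^\nvright\leq m^\nvright n^{\nvertex-\nvright}(4\ell^4)^{2k}\left((\nvertex+1)^4(\ell+1)^2\right)^{2k(\chi+1)}$. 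Your ``one extra triple-level together with markers for its cycles'' on the whole segment is not a worked-out encoding: a tangled non-backtracking walk need not have a single cycle, so the encoding of Section~\ref{sec:path_counting} cannot be applied to it directly; encoding its two tangle-free halves is the way to make this precise.
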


\begin{proof}
The proof is analogous to the proof of Theorem \ref{spec_B_bar}. Recall the definition of $R^{\ell,j}$ in Eqn.~\ref{error_term}.
For any integer $k$, we have that
\begin{align}
\E\left(\|R^{\ell,j}\|^{2k}\right)
&\leq 
\E 
\left(
\mathrm{Tr}
\left[
 \left( (R^{\ell,j}) (R^{\ell,j})^* \right)^{k}
\right]
\right)
\nonumber \\
&\leq
\sum_{ \gamma \in \mathcal{D} }
\prod_{s=1}^{2k}
\left|
\E
\prod_{i=1}^{j-1} \bar{B}_{e^s_{2i-1} e^s_{2i+1}} 
S_{e^s_{2j-1} e^s_{2j+1}}
\prod_{i=j+1}^{\ell} B_{e^s_{2i-1} e^s_{2i+1}} 
\right| 
\nonumber \\
&=
\sum_{ \gamma \in \mathcal{D} }
\prod_{s=1}^{2k}
\left|
\E
\prod_{i=1}^{j-1} \bar{M}_{e^s_{2i-1} e^s_{2i}} 
S_{e^s_{2j-1} e^s_{2j+1}}
\prod_{i=j+1}^{\ell} M_{e^s_{2i-1} e^s_{2i}} 
\right| 
\label{trace_expectation:R^l,j}
\end{align}
Now, the sum is over 
the set $\mathcal{D}$ of circuits 
$\gamma= (\gamma_1,\gamma_2,\dots,\gamma_{2k})$ of length $2k\ell$
formed from $2k$ elements of $T^{\ell,j}$, 
$\gamma_s = (e^s_1, e^s_2, \ldots, e^s_{2\ell+1} )$ for $s \in [2k]$,
with the convention $e^{s+1}_1 = e^s_{2\ell+1}$ (see definition ~\ref{def:tangled_paths}).

% We proceed as before.
% Let
% \begin{equation}
%     K_j = \sum_{\gamma \in \mathcal{D}_j}
%     \prod_{s=1}^{2k}
%     \left|
%     \E
%     \prod_{i=1}^{j-1} \bar{M}_{e^s_{2i-1} e^s_{2i}} 
%     S_{e^s_{2j-1} e^s_{2j+1}}
%     \prod_{i=j+1}^{\ell} M_{e^s_{2i-1} e^s_{2i}} 
%     \right| .
% \end{equation}
% Then,
% \begin{equation}
%     \E\left(\|R^{\ell,j}\|^{2k}\right)
%     \leq
%     K_1 + K_2 + K_3.
% \end{equation}
% \todo{Hmm... above probably not necessary}

Proceeding as before, using Lemma \ref{lem:combinatorial_of_reminder} and Corollary \ref{cor:edge_iso} we get that
\begin{align*}
    \E\left(\|R^{\ell,j}\|^{2k}\right)
    &\leq
    \sum_{\nvertex, \nedge, \nvright}
    \sum_{\gamma \in \mathcal{D}_{\nvertex, \nedge}^\nvright }
    \prod_{s=1}^{2k}
    \left|
    \E
    \prod_{i=1}^{j-1} \bar{M}_{e^s_{2i-1} e^s_{2i}} 
    S_{e^s_{2j-1} e^s_{2j+1}}
    \prod_{i=j+1}^{\ell} M_{e^s_{2i-1} e^s_{2i}}
    \right|  \\
    &\leq
    \sum_{\nvertex, \nedge, \nvright}
    D_{\nvertex, \nedge}^\nvright 
    I_{\nvertex, \nedge}^\nvright 
    \prod_{s=1}^{2k}
    \left|
    \E
    \prod_{i=1}^{j-1} \bar{M}_{e^s_{2i-1} e^s_{2i}} 
    S_{e^s_{2j-1} e^s_{2j+1}}
    \prod_{i=j+1}^{\ell} M_{e^s_{2i-1} e^s_{2i}}
    \right| \\
    &\leq
    \sum_{\nvertex, \nedge, \nvright}
    m^\nvright n^{\nvertex-\nvright}
    (2 \ell)^{6k} 
    ( (\nvertex + 1)^2 (\ell + 1) )^{6k(\chi + 1)} \\
    & \qquad \qquad \cdot
    (d_1 (d_1 - 1))^{\nvertex - \nvright}
    (d_2 (d_2 - 1))^\nvright
    (d - 1)^{2 (\chi - 1)} \\
    & \qquad \qquad \cdot
    \prod_{s=1}^{2k}
    \left( \frac{d-1}{|E|} \right)
    C
    2^{b_s}
    \left(\frac{1}{|E|} \right)^{\nedge_s}
    \left( \frac{6\ell}{\sqrt{|E|}} \right)^{\nedge_{1,s}} .
\end{align*}
The last inequality uses Lemma \ref{expectation_bound} on each path $\gamma_s$, 
with  $\nedge_s$, $b_s$, and $\nedge_{1,s}$ 
defined analogous to the quantities $\nedge$, $b$ and $\nedge_1$ in the same lemma.
Note that $S_{ef} \leq \frac{d-1}{|E|}$ for any $e,f$ and 
$\sum_{s=1}^{2k} b_s \leq b_\mathcal{D} \leq 16 (k + \chi)$
by Lemma~\ref{lem:inconsistent_tangled}. 
Setting
$\nedge = \sum_{s=1}^{2k} \nedge_s$, 
taking
$\nedge_1 = \sum_{s=1}^{2k} \nedge_{1,s} \geq 0$,
using $d_1, d_2 \leq d$,
and combining terms, 
we find that
\begin{align*}
    \E\left(\|R^{\ell,j}\|^{2k}\right)
    & \leq
    \sum_{\nvertex, \nedge, \nvright}
    (2 \ell)^{6k} 
    ( (\nvertex + 1)^2 (\ell + 1) )^{6k(\chi + 1)} \\
    & \qquad \qquad \cdot
    (d_1 - 1)^{\nvertex - \nvright}
    (d_2 - 1)^\nvright
    (d - 1)^{2 (\chi - 1)} \\
    & \qquad \qquad \cdot
    \left( \frac{C'}{|E|} \right)^{2k}
    2^{b_\mathcal{D}}
    \left(\frac{1}{|E|} \right)^{\nedge - \nvertex} \\
    &\leq
    c_0 
    \ell^{6k} 
    c_1^k 
    n^{1 - 2k}
    \sum_{\nvertex, \nedge}
    \nvertex
    ( (\nvertex + 1)^2 (\ell + 1) )^{6k(\chi + 1)}
    (d - 1)^{\nvertex}
    \left(
    \frac{c_2}{n}
    \right)^{\chi} \\
    &\leq
    c_0 
    \ell^{6k} 
    c_1^k 
    n^{1 - 2k}
    \sum_{\nvertex = 1}^{2k\ell}
    \sum_{\nedge = \nvertex - 1}^{2k\ell}
    \nvertex
    ( (\nvertex + 1)^2 (\ell + 1) )^{6k(\chi + 1)}
    (d - 1)^{\nvertex}
    \left(
    \frac{c_2}{n}
    \right)^{\chi} \\
    &\leq
    c_0 
    \ell^{6k} 
    c_1^k 
    n^{1 - 2k}
    \sum_{\nvertex = 1}^{2k\ell}
    \nvertex
    ( (\nvertex + 1)^2 (\ell + 1) )^{6k}
    (d - 1)^{\nvertex}
    \sum_{\chi = 0}^{2k\ell}
    \left(
    ( (\nvertex + 1)^2 (\ell + 1) )^{6k}
    \frac{c_2}{n}
    \right)^{\chi}
    ,
    % & \leq
    % (2 k \ell)^3
    % c_0 
    % \ell^{6k} 
    % c_1^k 
    % n^{1-2k}
    % ( (2 k \ell + 1)^2 (\ell + 1) )^{6k}
    % (d - 1)^{2 k \ell} 
    % \\
    % & = 
    % (k \ell)^3
    % c_3 
    % \ell^{6k} 
    % c_1^k 
    % n
    % ( (2 k \ell + 1)^2 (\ell + 1) )^{12 k^2 \ell}
    % \left(
    %     \frac{(d - 1)^\ell}{n}
    % \right)^{2 k}
\end{align*}
for some constants $c_0, c_1, c_2$.
Note that the important $n^{-2k}$ comes from
the $|E|^{-2k}$ deterministic term.
Now, like before we have that $( (\nvertex + 1)^2 (\ell + 1) )^{6k} = o(n)$,
so for sufficiently large $n$ we can bound the sum over $\chi$
by a constant.
The sum over $\nvertex$ is easily bounded as before, leading to
\begin{align*}
    \E\left(\|R^{\ell,j}\|^{2k}\right)
    &\leq 
    (d-1)^{2k\ell}
    n^{1-2k}
    \cdot
    c_0'
    \ell^{6k} 
    c_1^k 
    (2k\ell)^2
    ((2k\ell + 1)^2 (\ell + 1))^{6k}
    \\
    &\leq
    (d-1)^{2k\ell}
    n^{1-2k}
    \cdot
    c_0''
    c_1'^{(\log n)^{1/3}}
    (\log n)^{6(\log n)^{1/3}}
    (\log n)^{8/3}
    ((\log n)^{11/3})^{6(\log n)^{1/3}} \\
    &:= 
    (d-1)^{2k\ell}
    n^{1-2k}
    \cdot 
    f(n).
\end{align*}
To finish the proof, we note that $f(n) = o(n^\epsilon)$ 
for any $\epsilon > 0$.
Take $g(n) = \exp((\log n)^{3/4})$.
Then, applying Markov's inequality,
\begin{equation}
    \P 
    \left[ 
    \| R^{\ell,j} \| 
    \geq \frac{(d - 1)^{\ell} }{n} g(n)
    \right]
    \leq
    \frac{\E \left( \| R^{\ell,j} \|^{2k} \right) n^{2k}}
    { (d-1)^{2k\ell} g(n)^{2k} }
    \leq
    \frac{n f(n)}{g(n)^{2k}} = o(1).
\end{equation}

\subsection{Proof of the main result, Theorem~\ref{thm:gap_B}}
We will again take $\ell = \lfloor c \log n \rfloor$, with $c$ chosen so that the
graph is $\ell$-tangle-free 
with high probability.
By Eqns.~\eqref{eq:upper_bound_lambda2} and \eqref{decomposition},
\[
|\lambda_2|^{\ell}\leq \| \bar{B}^{(\ell)}\|+
\sum_{k=1}^{\ell} \| R^{\ell,j}\|.
\]
Notice that
$
(d-1)^\ell \leq (d-1)^{c \log n} \leq  n^{c \log d}
$,
so take 
$c < \min \left(\frac{1}{32}, \frac{1}{\log d} \right)$.
Then $(d-1)^\ell = O(n^\epsilon)$ for some $0 < \epsilon < 1$,
and recall that 
$\exp((\log n)^{3/4}) = o(n^{\epsilon'})$ for any 
$\epsilon' > 0$.
We apply
Theorems \ref{spec_B_bar} and \ref{spec_R} to get
\begin{eqnarray}
\nonumber
|\lambda_2| 
& \leq & 
\left(
\exp((\log n)^{3/4}) \left( (d_1-1)(d_2-1) \right)^{\ell/4} 
+ 
\ell 
\exp((\log n)^{3/4})
\frac{(d-1)^\ell}{n}
\right)^{1/\ell} \\
\nonumber
& = & \left( (d_1-1)(d_2-1) \right)^{1/4}+\epsilon_n,
\end{eqnarray}
where $\epsilon_n \to 0$ as $n \to \infty$.
\end{proof}

\section{Application: Community detection}
\label{sec:communities}

In many cases, such as online networks, 
we would like to be able to recover specific communities in those graphs.
In the typical setup, a community is a set of vertices that are more densely connected
together than to the rest of the graph.

The model we present here
is inspired by the planted partition or
stochastic blockmodel (SBM, \citet*{holland1983}).
In the SBM, each vertex belongs to a class or community, 
and the probability that two vertices are connected is a function
of the classes of the  vertices.
It is a generalization of the \ER random graph.
The classes or blocks in the SBM make it a good model for
graphs with community structure, 
where nodes preferentially connect to other nodes
depending on their communities
(\citet*{newman2010}).

There are many methods for detecting a community given a graph.
For an overview of the topic, see \citet*{fortunato2010}.
Spectral clustering is a common method which can be applied to any set of data
$\{ \zeta_i \}_{i=1}^n$.
Given a symmetric and non-negative similarity function $S$,
the similarity is computed for every pair of data points, forming a matrix
$A_{ij} = S(\zeta_i, \zeta_j ) = S(\zeta_j, \zeta_i ) \geq 0$.
The spectral clustering technique is to compute the leading eigenvectors of
$A$, or matrices related to it, and use the eigenvectors to cluster the data.
In our case, the matrix in question is just the 
Markov matrix of a graph, defined soon.
We will show that we can guarantee the success of the technique if the degrees
are large enough.

Our graph model is a regular version of the SBM.
We build it on a ``frame,''
which is a small, weighted graph that
defines the community structure present in the larger, random graph.
Each class is represented by a vertex in the frame.
The edge weights in the frame define the number of edges
between classes.
What makes our model differ from the SBM is that
the connections between classes are described by a 
regular random graph rather than an \ER random graph.
However, the graph itself is not necessarily regular.

A number of authors have studied similar models.
Our model is a generalization of a random lift of the frame,
which is said to {\it cover} the random graph
(e.g. \citet*{marcus2013a,angel2015,bordenave2015a}).
This type of random graph was also studied by 
\citet*{newman2014}, who called it an equitable random graph,
since the community structure is equivalent to an 
equitable partition.
This partition induces a number of symmetries across vertices in 
each community which are useful when studying the 
eigenvalues of the graph.
\citet*{barrett2017} studied the effect of these symmetries 
from a group theory standpoint.
The work of \citet*{barucca2017} is closest to ours:
they consider spectral properties of such graphs and their 
implications for spectral community clustering. 
In particular, they show that the spectrum of what we call the ``frame'' 
(in their words, the discrete spectrum, which is deterministic) 
is contained in that of the random graph.
They use the resolvent method
(called the cavity method in the physics community)
to analyze the continuous part of the spectrum in the limit of
large graph size, and argue that community detection is possible
when the deterministic frame eigenvalues all lie outside the bulk.
However, this analysis assumes that there are no stochastic 
eigenvalues outside the bulk, which will only hold with high probability
if the graph is Ramanujan.
Our analysis shows that, if a set of pairwise spectral gaps hold between
all communities, then this will be the case.

\subsection{The frame model}

We define the {\it \kframe} distribution 
$\mathcal{G}(n,H)$
as a distribution of simple graphs on $n$ 
vertices parametrized by the ``frame'' $H$.
The frame $H=(V,E,p,D)$
is a weighted, directed graph.
Here, $V$ is the vertex set,
$E \subseteq \{(i,j): i,j \in V\}$ 
is the directed edge set,
the vertex weights are $p$, and the edge weights are $D$.
Note that we drop the arrows on the edge set in this Section, 
since it will always be directed.
The vertex weight vector
$p \in \R^{|V|}$, 
where
$\sum_{i \in V} p_i = 1$,
sets the relative sizes of the classes.
The edge weights are a matrix of degrees
$D \in \mathbb{N}^{|V| \times |V|}$.
These assign the number of edges between each class in the random graph:
$D_{ij}$
is the number of edges from each vertex in class $i$
to vertices in class $j$.
The degrees must satisfy the
balance condition
\begin{equation}
  \label{eq:detailedbalance}
  p_i D_{ij} = p_j D_{ji}
\end{equation}
for all
$i,j \in V$ where $(i,j)$ or $(j,i)$ are in $E$.
This requires that, for every edge $e \in E$,
its reverse orientation also exists in $H$.
We also require that
$n_i = n p_i \in \mathbb{N}$
for every $i \in V$, 
so that the number of vertices in each type is integer.

Given the frame $H$,
a \kframe $G \sim \mathcal{G}(n,H)$
is a simple graph on $n$ vertices with $n_i$
vertices in class $i$.
It is chosen uniformly among graphs 
with the constraint that each vertex in class $i$ makes
$D_{ij}$ connections among the vertices in class $j$.
In other words, if $i = j$, we sample that block of the adjacency
matrix as the adjacency matrix of a $D_{ii}$-regular random graph
on $n_i$ vertices.
For off-diagonal blocks $i \neq j$,
these are sampled as bipartite, biregular random graphs
$\mathcal{G} (n_i, n_j, D_{ij}, D_{ji})$.

\begin{figure}[t!]
  {\bf \Large A}\qquad {\large Frame}
  \hspace{2.05in}
  {\bf \Large B}\qquad {\large Random regular frame graph}
  \begin{center}
  \hspace{.34in}
  \includegraphics[height=2.6in]{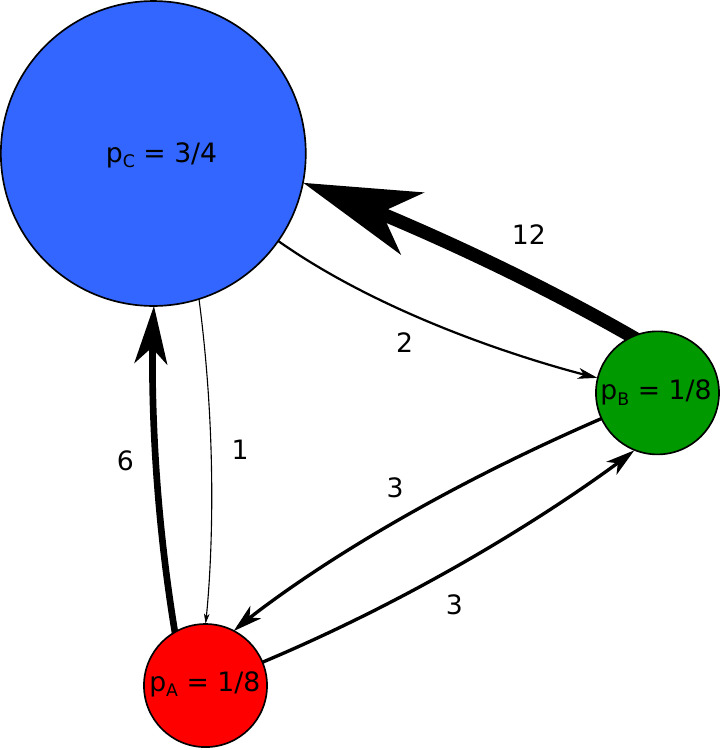}
  \hfill
  \includegraphics[height=2.6in]{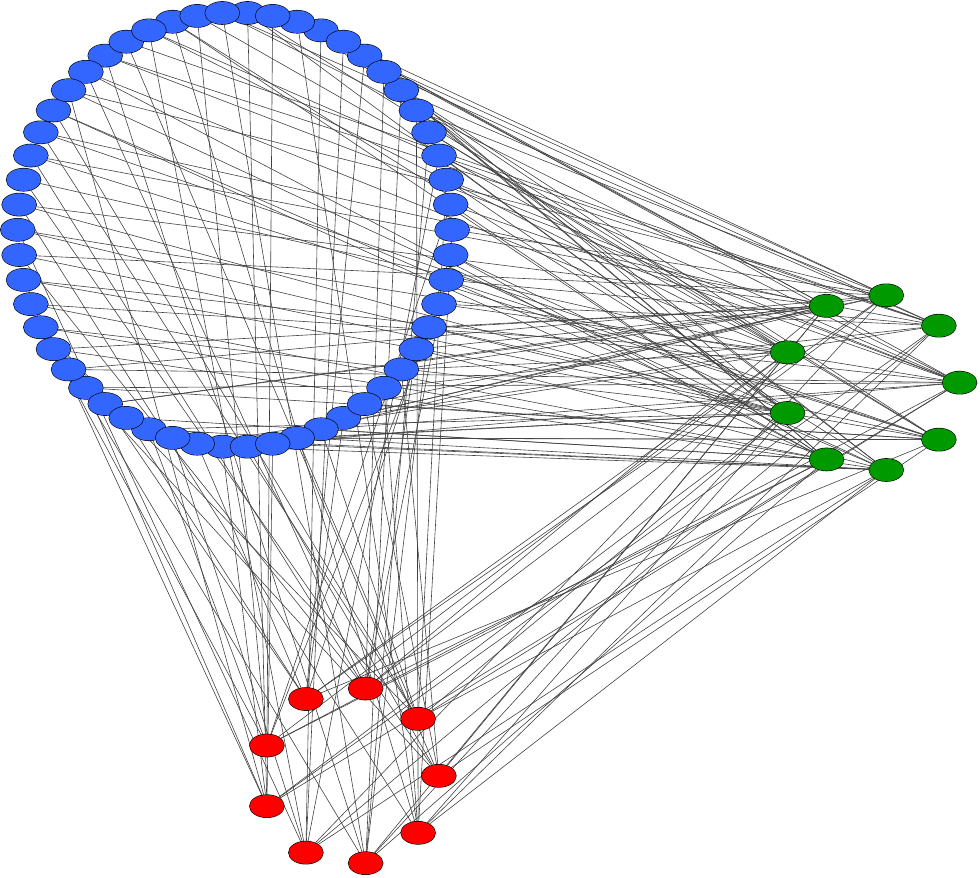}
\end{center}
  \caption{Schematic and realization of a \kframe. 
    {\bf A}, 
    the frame graph.
    The vertices of the frame (red = A, green = B, blue = C)
    are weighted according to their proportions $p$
    in the \kframe.
    The edge weights $D_{ij}$ 
    set the between-class vertex degrees
    in the \kframe.
    This frame will yield a random tripartite graph.
    {\bf B}, realization of the graph on 72 vertices. 
    In this instance, there are 
    $1/8 \times 72=9$ green and red vertices
    and
    $3/4 \times 72=54$ blue vertices.
    Each blue vertex connects to 
    $k_{CA}=1$ red vertex and
    $k_{CB}=2$ green vertices.
    This is actually a multigraph; 
    with so few vertices, the probability that
    the configuration model algorithm yields parallel edges is high.
  }
  \label{fig:frame}
\end{figure}

Sampling from 
$\mathcal{G}(n,H)$
can be performed similar to the configuration model,
where each node is assigned as many half-edges as its degree,
and these are wired together with a random matching 
(\citet*{newman2010}).
The detailed balance condition 
Eqn.~\eqref{eq:detailedbalance} 
ensures that this matching is possible.
Practically, we often have to generate many candidate matchings
before the resulting graph is simple, 
but the probability of a simple graph is bounded away from zero for
fixed $D$.

An example of a \kframe is the bipartite, biregular random graph.
The family $\mathcal{G}(n, m, d_1, d_2)$ 
is a \kframe $\mathcal{G}(n + m, H)$,
where the frame $H$ is the directed path on two vertices:
$V= \{ 1, 2 \}$ and $E = \{ (1,2), (2,1) \}$.
The weights are taken as 
$p_1 = n/(n+m)$, $p_2 = m/(n+m)$, $D_{12} = d_1$, and $D_{21} = d_2$.

Another example \kframe is shown in
Figure~\ref{fig:frame}.
In this case, the frame $H$ has 
$V= \{ A, B, C \}$ and 
$E = \{ (A,B), (A,C), (B,A), (B,C), (C,A), (C,B) \}$
with weights $p$ and $D$ as shown in Figure~\ref{fig:frame}A.
We see that this generates a random tripartite graph with 
regular degrees between vertices in different independent sets,
shown in Figure~\ref{fig:frame}B.

\subsection{Markov and related matrices of frame graphs}

Now, we define a number of matrices associated with the frame 
and the sample of the \kframe.

Let $G$ be a simple graph.
Define $D_G = \mathrm{diag}(d_G)$,
the diagonal matrix of degrees in $G$.
The Markov matrix $P=P(G)$ is defined as
\[
P = D_G^{-1} A,
\]
where $A=A(G)$ is the adjacency matrix.
The Markov matrix is the row-normalized adjacency matrix, and it contains
the transition probabilities of a random walker on the graph $G$.
Let $L = I - \mathcal{L} =  D_G^{-1/2} A \, D_G^{-1/2}$
be a matrix simply related to the normalized Laplacian.
We call this the {\it symmetrized Markov matrix}.
Then $P$ and $L$ have the same eigenvalues,
but $L$ is symmetric, since $L_{ij} = \frac{A_{ij}}{ \sqrt{ d_i d_j } }$. 

Suppose $G \sim \mathcal{G}(n, H)$, where the frame $H = (V,E,p, D)$.
Another matrix that will be useful is what we call the 
{\it Markov matrix of the frame}
$R$, where
$R_{ij} = \frac{ D_{ij} }{\sum_j D_{ij}}$.
% \[
% R = \mathrm{diag}(d)^{-1} D,
% \]
% where now $d_i = \sum_j d_{ij}$, 
Thus, $R$ is a row-normalized $D$, 
in the same way that the Markov matrix $P$ is the row-normalized adjacency matrix $A$.
Furthermore, $R$ is invariant under any uniform scaling of the degrees.
% $D \to \kappa D$.
Because of this equitable partition property of {\kframe}s, 
eigenvectors of the frame matrices $D=D(H)$ or $R=R(H)$ lift to 
eigenvectors of $A = A(G)$ or $P=P(G)$, respectively.
Suppose $D x = \lambda x$,
then it is a straightforward exercise to check that $A \tilde{x} = \lambda \tilde{x}$ 
for the piecewise constant vector
\[
\tilde{x} = 
\left[
\begin{array}{c}
\1_{n_1} x_1 \\
\1_{n_2} x_2 \\
\vdots
\end{array}
\right] .
\]
Using the same procedure, we can
lift any eigenpair of $R$ to an eigenpair of $P$ with the same eigenvalue.

\subsubsection{Bounds on the eigenvalues of frame graphs in terms of blocks}

The following result is due to \citet*{wan2015}:
\begin{proposition}
\label{thm:spurious_eig_bound}
Let $G$ be a random regular frame graph $G(n,H)$,
$P$ its Markov matrix, 
and $L$ the Laplacian
with vertices ordered by class in both cases.
% Define the vector $s \in \R^n$ by $s_i = \sqrt{d_i}$, 
% which is the Frobenius eigenvector of $L$ with eigenvalue 1.
Let $R$ be the Markov matrix of the frame $H=(V,E,p,D)$, 
with $|V(H)| = K$ classes.
% and $\lambda_1 \ge \ldots \ge \lambda_K$ the eigenvalues of $D$.
Define the matrices
$L^{(kl)}$
as the $(k,l)$ 
block of $L$ with respect to the clustering of vertices by class.
For $l \neq k$, let
\[
M^{(kl)} =
\left(
\begin{array}{cc} 
0 & L^{(kl)} \\
L^{(kl)} & 0 
\end{array}
\right) 
=
\left(
\begin{array}{cc} 
0 & L^{(kl)} \\
L^{(lk)*} & 0 
\end{array}
\right)
.
\]
For $l=k$, let $M^{(kk)} = L^{(kk)}$.
Assume that all eigenvalues of $D$ are nonzero and pick a constant $C$ such that
\[
	\frac{| \lambda_2^{(kl)} |}{\lambda_1^{(kl)}} \le C < 1
\]
for every $k,l = 1, \ldots K$, 
where $\lambda_1^{(kl)}$ and $\lambda_2^{(kl)}$ are the
leading and second eigenvalues of $M^{(kl)}$.
Under these conditions, the eigenvalues of $P$ which are not eigenvalues of $R$
%(spurious eigenvalues)
are bounded by 
\[
C \max_{k=1,\ldots, K} \left( R_{kk} + \sum_{l \neq k} \sqrt{R_{kl} R_{lk}} \right)
\leq \frac{C}{2}  \left(1 + \max_{k=1,\ldots, K} \sum_{l=1}^K R_{lk} \right) .
\]
\end{proposition}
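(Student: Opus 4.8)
The plan is to exploit the equitable-partition (block) structure of the symmetrized Markov matrix $L$. Since $L = D_G^{1/2} P D_G^{-1/2}$, the matrices $L$ and $P$ are similar, hence share a spectrum, so it suffices to bound the eigenvalues of $L$ that are not eigenvalues of $R$. Let $W = \mathrm{span}\{\1_{n_1}, \ldots, \1_{n_K}\} \subset \R^n$ be the $K$-dimensional space of vectors constant on each class. A direct computation shows that $L\,\1_{n_k}$ is again piecewise constant (its value on class $j$ equals $D_{jk}/\sqrt{d_j d_k}$), so $L(W) \subseteq W$; since $L$ is symmetric, $L(W^\perp) \subseteq W^\perp$ as well, and $\R^n = W \oplus W^\perp$ is an orthogonal, $L$-invariant decomposition. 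Using the intertwining $P\,\widetilde{x} = \widetilde{Rx}$ of $R$ with the lifting map $x \mapsto \widetilde{x}$ (already noted in the discussion of frame matrices) together with the fact that $D_G^{1/2}$ preserves $W$, one gets $\sigma(L\vert_W) = \sigma(R)$. Hence every eigenvalue of $P$ that is not an eigenvalue of $R$ is an eigenvalue of $L\vert_{W^\perp}$, and the task reduces to bounding $\|L\vert_{W^\perp}\|$.

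First I would bound each block on the zero-sum subspace. Write $x \in W^\perp$ as $x = (x_1, \ldots, x_K)$ with each $x_k \perp \1_{n_k}$. For $l \neq k$, $L^{(kl)} = (d_k d_l)^{-1/2} X^{(kl)}$ where $X^{(kl)}$ is the biadjacency matrix of the bipartite biregular graph between classes $k$ and $l$; its top singular value is $\sqrt{D_{kl}D_{lk}}$ with right singular vector proportional to $\1_{n_l}$, so for $x_l \perp \1_{n_l}$ we have $\|L^{(kl)} x_l\| \le \sigma_2(L^{(kl)}) \|x_l\| = |\lambda_2^{(kl)}|\,\|x_l\| \le C\, \lambda_1^{(kl)} \|x_l\| = C\sqrt{R_{kl}R_{lk}}\,\|x_l\|$, the last equality since $\lambda_1^{(kl)} = \sigma_1(L^{(kl)}) = \sqrt{D_{kl}D_{lk}/(d_k d_l)} = \sqrt{R_{kl}R_{lk}}$. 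The diagonal block $L^{(kk)} = d_k^{-1} A^{(kk)}$ is a scaled regular-graph adjacency matrix with $\1_{n_k}$ as Perron eigenvector and eigenvalue $\lambda_1^{(kk)} = R_{kk}$, so on $\1_{n_k}^\perp$ its norm is $|\lambda_2^{(kk)}| \le C R_{kk}$. Combining with the triangle inequality, $\|(Lx)_k\| = \bigl\|\sum_l L^{(kl)} x_l\bigr\| \le C \sum_l \sqrt{R_{kl}R_{lk}}\,\|x_l\|$, with the convention $\sqrt{R_{kk}R_{kk}} = R_{kk}$.

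Next I would aggregate. Set $a_k = \|x_k\|$ and let $S$ be the $K\times K$ symmetric, entrywise nonnegative matrix with $S_{kl} = \sqrt{R_{kl}R_{lk}}$. The previous step gives $\|Lx\|^2 \le C^2 \sum_k \bigl(\sum_l S_{kl} a_l\bigr)^2 = C^2\|Sa\|^2 \le C^2 \|S\|^2 \|a\|^2 = C^2 \|S\|^2 \|x\|^2$, so $\|L\vert_{W^\perp}\| \le C\|S\|$. Since $S$ is symmetric and nonnegative, $\|S\| = \rho(S)$ is at most the maximal row sum, which yields the first bound $C\max_k \bigl(R_{kk} + \sum_{l\neq k}\sqrt{R_{kl}R_{lk}}\bigr)$. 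For the second bound, apply $\sqrt{R_{kl}R_{lk}} \le \tfrac{1}{2}(R_{kl} + R_{lk})$ and use that $R$ is row stochastic ($\sum_l R_{kl} = 1$) to rewrite $R_{kk} + \tfrac{1}{2}\sum_{l\neq k}(R_{kl}+R_{lk}) = \tfrac{1}{2}\bigl(1 + \sum_l R_{lk}\bigr)$; taking the maximum over $k$ completes the estimate.

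\textbf{Main obstacle.} The block manipulation is routine; the step carrying the real content is the reduction to $L\vert_{W^\perp}$ together with the observation that, on the zero-sum subspace of each class, the off-diagonal blocks are controlled by their \emph{second} singular value. This is exactly where the hypothesis $|\lambda_2^{(kl)}|/\lambda_1^{(kl)} \le C$ enters, and, when this proposition is later applied to random frame graphs, is where Theorem~\ref{cor:gap_A} (for $k\neq l$) and the analogous Ramanujan-type bound for random regular graphs (for $k=l$) are invoked to verify the hypothesis with high probability. The one point requiring care is that $\lambda_2^{(kk)}$ must be read as the second-largest eigenvalue \emph{in absolute value}, so that $|\lambda_2^{(kk)}|$ genuinely bounds $\|L^{(kk)}\vert_{\1^\perp}\|$.
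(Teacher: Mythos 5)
Your proof is correct. Note that the paper does not contain its own proof of Proposition~\ref{thm:spurious_eig_bound} — it is attributed to \citet{wan2015} and stated without proof — so there is no in-paper argument to compare against. That said, your argument is the natural one and is sound in every step I checked: the $L$-invariance of $W$ and $W^\perp$, the identification $\sigma(L\vert_W)=\sigma(R)$ via the lifting intertwiner conjugated by the (class-constant) degree matrix, the identification of $\sigma_1(L^{(kl)})=\sqrt{R_{kl}R_{lk}}$ with singular vectors $\1_{n_k},\1_{n_l}$ so that on $\1_{n_l}^\perp$ the block is controlled by $\sigma_2 = |\lambda_2^{(kl)}|\le C\sqrt{R_{kl}R_{lk}}$, the aggregation through the symmetric nonnegative $K\times K$ matrix $S$ and its row-sum bound, and the AM--GM passage using row-stochasticity of $R$. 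Your closing caveat is also the right one: $\lambda_2^{(kk)}$ must be read as second largest in absolute value (and, implicitly, the ratio hypothesis requires $\lambda_1^{(kl)}>0$, i.e.\ the relevant $D_{kl}$ nonzero; when $D_{kl}=0$ the block vanishes and can simply be dropped). This matches what one would expect the cited argument in \citet{wan2015} to be.
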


The spectrum of the Markov matrix $\sigma(P)$ enjoys a simple connection to $\sigma(A)$ 
when $A$ is the adjacency matrix of a graph drawn from $G(n,m,d_1,d_2)$.
In this case, $L = \frac{ A }{ \sqrt{d_1 d_2} }$, 
so the eigenvalues of $P$ are just the scaled eigenvalues of $A$.
This and the spectral gap for bipartite, biregular random graphs, 
Theorem~\ref{cor:gap_A},
lead to the following remark:

\begin{remark}
For a random regular frame graph, 
$M^{(kl)}$ corresponds to the symmetrized Markov matrix $L$ 
of a bipartite biregular graph 
$G(n_k, n_l, D_{kl}, D_{lk})$.
Thus,
\[
\frac{| \lambda_2^{(kl)} |}{\lambda_1^{(kl)}} 
\leq
\frac{\sqrt{D_{kl} - 1} +\sqrt{D_{lk} - 1}}{\sqrt{D_{kl} D_{lk}}} + \epsilon .
\]
\end{remark}

Suppose we are given a frame that fits the conditions of 
Proposition~\ref{thm:spurious_eig_bound};
namely, $D$ cannot have any zero eigenvalues.
Then we can uniformly grow the degrees, which leaves $R$ invariant,
but allows us to reach an arbitrarily small $C$.
This ensures that the leading $K$ eigenvalues of $P$ are equal to the 
eigenvalues of $R$.
Note that this actually means that the entire \kframe satifsfies 
a weak Ramanujan property.
We now show that this guarantees spectral clustering.

\subsection{Spectral clustering}

Spectral clustering is a popular method of community detection.
Because some eigenvectors of $P$, the Markov matrix of a \kframe,
are piecewise constant on classes, 
we can use them to recover the communities so long as those eigenvectors can be identified.
Suppose there are $K$ total classes in our \kframe.
Then, given the eigenvectors $x^1, x^2, \ldots, x^K$, which are piecewise constant across classes,
we can cluster vertices by class.
For each vertex $v \in V(G)$, associate the vector
$y^v \in \R^K$
where $y^v_j = x^j_v$.
Then if $y^v = y^u$ for $u, v \in V(G)$, 
vertices $u$ and $v$ belong to the same class\footnote{
In the SBM case, the eigenvectors are not piecewise constant, 
but they are aligned with the eigenvectors of $R$
and thus highly correlated across vertices in the same class.
A more flexible clustering method such as $K$-means must be applied to the vectors $y$
in that case.
}.
It is simple to recover these piecewise constant vectors
$x^1, x^2, \ldots, x^K$ when they are the leading eigenvectors.
These facts lead to the following theorem:

\begin{theorem}[Spectral clustering guarantee in frame graphs]
\label{thm:spectral_clustering}
Let $G$ be a random regular frame graph $G(n,H)$
and $P$ its Markov matrix.
Let $R$ be the Markov matrix of the frame $H=(V,E,p,D)$, 
with $|V(H)| = K$ classes
and $\lambda_1 \ge \ldots \ge \lambda_K$ the eigenvalues of $R$
and $|\lambda_K| > 0$. 
Then we can scale the degrees by some $\kappa \in \mathbb{N}$,
$D \to \kappa D$,
so that the vertex classes are recoverable by spectral clustering of 
the leading $K$ eigenvectors of $P$.
\end{theorem}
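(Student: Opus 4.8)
The plan is to deduce this from three ingredients established earlier: the lifting property of random regular frame graphs, the spectral gap for the blocks (Theorem~\ref{cor:gap_A} for the bipartite biregular off-diagonal blocks, and the classical $d$-regular gap of \citet{friedman2003a} or \citet{bordenave2015} for the diagonal blocks), and Proposition~\ref{thm:spurious_eig_bound} of \citet{wan2015}. Read in reverse, the lifting property says exactly that the $K$-dimensional subspace $W\subset\R^n$ of vectors constant on each class is $P$-invariant and that $P$ acts on $W$ as $R$ acts on $\R^K$; so the ``structured'' eigenvalues of $P$ are precisely $\lambda_1,\dots,\lambda_K$, the eigenvalues of $R$, with piecewise-constant eigenvectors. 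Proposition~\ref{thm:spurious_eig_bound} then controls everything else: every eigenvalue of $P$ not coming from $W$ has modulus at most $C\,\rho(H)$, where $\rho(H)=\max_k\big(R_{kk}+\sum_{l\ne k}\sqrt{R_{kl}R_{lk}}\big)$ and $C$ is any common bound $<1$ on the block ratios $|\lambda_2^{(kl)}|/\lambda_1^{(kl)}$. The crucial observation is that $R$, hence both $\{\lambda_i\}$ and $\rho(H)$, is unchanged under the rescaling $D\mapsto\kappa D$, while the block ratios can be driven to zero by this rescaling.

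First I would make the scaling choice. By the Remark following Proposition~\ref{thm:spurious_eig_bound}, an off-diagonal block $M^{(kl)}$ of the rescaled graph is the symmetrized Markov matrix of $\mathcal{G}(n_k,n_l,\kappa D_{kl},\kappa D_{lk})$, so Theorem~\ref{cor:gap_A} gives $|\lambda_2^{(kl)}|/\lambda_1^{(kl)}\le \big(\sqrt{\kappa D_{kl}}+\sqrt{\kappa D_{lk}}\big)\big/\big(\kappa\sqrt{D_{kl}D_{lk}}\big)+\epsilon_n\le c_0(H)/\sqrt{\kappa}+\epsilon_n$; a diagonal block with $D_{kk}\ge1$ is the symmetrized Markov matrix of a $\kappa D_{kk}$-regular random graph, for which the $d$-regular gap gives a bound of the same form, and blocks with $D_{kk}=0$ are identically zero. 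Here $c_0(H)$ depends only on the frame and $\epsilon_n\to0$. Since the hypothesis $|\lambda_K|>0$ says that $R$, equivalently $D$, is nonsingular and that $\delta:=\min_i|\lambda_i|>0$ is a fixed constant invariant under rescaling, I would fix $\kappa\in\mathbb{N}$ so large that $c_0(H)\,\rho(H)/\sqrt{\kappa}<\delta/2$ and $c_0(H)/\sqrt{\kappa}<1/2$; this $\kappa$ depends only on $H$.

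Next I would invoke Proposition~\ref{thm:spurious_eig_bound} on the high-probability event --- obtained by a finite union bound over the $O(K^2)$ blocks, each marginally distributed as the appropriate bipartite biregular (resp.\ $d$-regular) graph and satisfying its gap estimate since $n_k=np_k\to\infty$ --- that every block ratio is at most $C:=c_0(H)/\sqrt{\kappa}+\epsilon_n<1$ for $n$ large. The proposition then bounds every non-structured eigenvalue of $P$ by $C\,\rho(H)<\delta$, so these are strictly smaller in modulus than $\lambda_1,\dots,\lambda_K$, each of modulus at least $\delta$; in particular the top $K$ eigenvalues of $P$ in absolute value are exactly the structured ones and the corresponding top-$K$ eigenspace of $P$ is precisely $W$. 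For the clustering step, any basis $x^1,\dots,x^K$ of $W$ consisting of eigenvectors of $P$ is constant on classes; writing $x^j_{(i)}$ for the value of $x^j$ on class $i$, the matrix $\big(x^j_{(i)}\big)_{i,j}$ is invertible (it is the change of coordinates between this basis and the lifts of the standard basis of $\R^K$), so the vectors $y^v=(x^1_v,\dots,x^K_v)$ are constant within each class and pairwise distinct across classes. Grouping vertices by the value of $y^v$ therefore recovers the $K$ classes exactly, and this holds for any choice of top-$K$ eigenbasis, so ties among $|\lambda_1|,\dots,|\lambda_K|$ are harmless.

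The main obstacle is not a new idea but the order of quantifiers: $\kappa$ must be fixed, as a function of the frame $H$ alone, \emph{before} letting $n\to\infty$, because the smallness of the block ratios combines a deterministic $\kappa\to\infty$ effect (the explicit $c_0(H)/\sqrt{\kappa}$ term coming from the degrees) with a probabilistic $n\to\infty$ effect (the $\epsilon_n$ term from Theorem~\ref{cor:gap_A}); once $\kappa$ is pinned down, all the probabilistic content is inherited from Theorem~\ref{cor:gap_A} and the classical $d$-regular gap through a finite union bound. A secondary point to check carefully is that the hypotheses of Proposition~\ref{thm:spurious_eig_bound} genuinely hold here --- nonvanishing spectrum of $D$, which is exactly the standing assumption, and a uniform bound $C<1$ on all block ratios, including the diagonal blocks, which is supplied by the $d$-regular gap.
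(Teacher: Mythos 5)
Your proposal takes essentially the same route as the paper: use Proposition~\ref{thm:spurious_eig_bound}, the Remark that block ratios are controlled by Theorem~\ref{cor:gap_A} (and the $d$-regular analogue for diagonal blocks), the invariance of $R$ under $D\mapsto\kappa D$ to drive $C$ below $|\lambda_K|/\rho(H)$, and the piecewise-constant lifted eigenvectors to cluster. The paper presents this argument informally in the surrounding prose rather than as a formal proof; your version fills in the quantifier order ($\kappa$ fixed from $H$ before $n\to\infty$), the union bound over blocks, and the invertibility of the $K\times K$ class-value matrix, all of which are the right details to supply.
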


\begin{remark}
The conditions of Theorem~\ref{thm:spectral_clustering},
while very general, are also weaker than may be expected using more sophisticated methods
tailored to the specific frame model. 
We illustrate this with the following example.
\end{remark}

\subsubsection{Example: The regular stochastic block model}

\citet*{brito2016} and \citet*{barucca2017} studied a regular stochastic block model,
which can be seen as a special case of our frame model.
Let the frame $H$ be the complete directed graph on two vertices, 
including self loops, where
\[
D = \left(
\begin{array}{cc}
d_1 & d_2\\
d_2 & d_1
\end{array}
\right)
\]
and $p = (1/2, 1/2)$.
Define the regular stochastic block model as $\mathcal{G} (2n, H)$.
This is a graph with two classes of equal size, 
representing two communities of vertices,
with within-class degree $d_1$ and between-class degree $d_2$. 
We assume $d_1 > d_2$, since communities are more strongly connected within.
\citet*{brito2016} proved the following theorem:
\begin{theorem}
\label{thm:RSBM_condition}
If $(d_1-d_2)^2 > 4 (d_1 + d_2 -1)$, then there is an efficient algorithm
for strong recovery, i.e.\ recovery of the exact communities with high probability
as $n \to \infty$.
\end{theorem}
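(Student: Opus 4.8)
The plan is to reduce Theorem~\ref{thm:RSBM_condition} to a single Ramanujan-type spectral gap for the adjacency matrix $A=A(G)$ of $G\sim\mathcal{G}(2n,H)$, after which exact recovery follows by inspection. Note first that $G$ is $(d_1+d_2)$-regular, so the Markov matrix is $P=\tfrac1{d_1+d_2}A$ and the frame Markov matrix $R$ has eigenvalues $1$ and $(d_1-d_2)/(d_1+d_2)$. Because every vertex has exactly $d_1$ neighbours in its own class and $d_2$ in the other, the lifted frame eigenvectors are \emph{exact} eigenvectors of $A$: $A\1_{2n}=(d_1+d_2)\1_{2n}$, and the signed class indicator $s$ (equal to $+1$ on class $1$ and $-1$ on class $2$) satisfies $As=(d_1-d_2)s$. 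Put $W=\{\1_{2n},s\}^{\perp}$; since $A$ is symmetric and $\1_{2n}\perp s$, the matrix $A$ preserves $W$.

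The core claim is that, with high probability,
\[
\rho:=\max\{\,|\lambda|:\lambda\in\sigma(A)\text{ with an eigenvector in }W\,\}\ \le\ 2\sqrt{d_1+d_2-1}+o(1).
\]
I would prove this by the same scheme as Theorems~\ref{thm:gap_B} and \ref{cor:gap_A}: form the non-backtracking operator $B$ of $G$, whose eigenvalues of modulus larger than $\sqrt{d_1+d_2-1}$ come --- via Ihara-Bass, Theorem~\ref{thm: ihara} --- only from the two frame eigenvectors of $A$; subtract the projection onto that low-dimensional top part; and bound $\|\bar B^{\ell}\|$ and the telescoping remainder by the tangle-free trace method of Section~\ref{sec:norm_bounds}. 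The path-counting of Section~\ref{sec:path_counting} carries over verbatim, with total degree $d_1+d_2$ in place of a single degree, so the non-backtracking growth rate is $\sqrt{d_1+d_2-1}$ and the bulk radius for $A$ is $2\sqrt{d_1+d_2-1}$; the McKay-type estimate Lemma~\ref{lem:McKay} and the near-independence Lemma~\ref{one_edge} go through for the two-species configuration model (an intra-class $d_1$-regular part wired independently to an inter-class $d_2$-biregular part) with the same proofs. This gap is the one established by \citet{brito2015}; it is sharper than what Proposition~\ref{thm:spurious_eig_bound} applied block-by-block yields, and that extra sharpness is precisely what lets one reach the Kesten--Stigum threshold below.

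Granting the bound on $\rho$, the conclusion is immediate. The hypothesis $(d_1-d_2)^2>4(d_1+d_2-1)$ is exactly $d_1-d_2>2\sqrt{d_1+d_2-1}$, so there is a fixed $\delta>0$ with $\rho\le d_1-d_2-\delta$ with high probability. Since $d_2\ge1$ gives $d_1+d_2\ne d_1-d_2$, the eigenvalue $d_1-d_2$ has no eigenvector in $W$ and none proportional to $\1_{2n}$, so its eigenspace is exactly $\mathrm{span}(s)$. Hence $\lambda_2(A)=d_1-d_2$, this eigenvalue is simple and separated from the remaining spectrum by a gap $\ge\delta$, and its unit eigenvector equals $\pm\,s/\sqrt{2n}$ on the nose. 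The efficient algorithm is then: compute a unit eigenvector $v$ for the second-largest eigenvalue of $A$ and output $\{i:v_i>0\}$ and $\{i:v_i<0\}$; with high probability these are the planted communities, i.e.\ strong recovery. Unlike the \ER stochastic block model, no local ``cleanup'' step is needed, precisely because $s$ is an exact eigenvector of the regular model. Every step runs in polynomial time.

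The main obstacle is the bound on $\rho$; everything else is a short deterministic argument. Establishing it reuses the full machinery of the present paper --- centering the (now two-dimensional) leading eigenspace of $B$, handling circuits that are globally tangled but built from $\ell$-tangle-free segments, and carrying the expectation and path-counting estimates through with total degree $d_1+d_2$ and the mixed intra/inter-class edge structure. A secondary point to verify carefully is that the edge-probability and near-independence lemmas genuinely hold for this two-block configuration model, but this is routine given Lemmas~\ref{lem:McKay} and \ref{one_edge}.
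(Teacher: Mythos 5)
The paper does not prove Theorem~\ref{thm:RSBM_condition} --- it quotes it from Brito et al.\ as a benchmark and then observes that its own Proposition~\ref{thm:spurious_eig_bound}, applied block by block, falls short of this sharp threshold. So there is no in-paper proof to compare against; the question is whether your sketch stands on its own. The deterministic portion does: $A\1_{2n}=(d_1+d_2)\1_{2n}$ and $As=(d_1-d_2)s$ hold exactly, $A$ (being symmetric) preserves $W=\{\1_{2n},s\}^{\perp}$, and granted $\rho\le 2\sqrt{d_1+d_2-1}+o(1)$ on $W$, the hypothesis $(d_1-d_2)^2>4(d_1+d_2-1)$ forces $d_1-d_2$ to be a simple eigenvalue with eigenvector exactly $s$, so sign-clustering is exact and polynomial-time.

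The gap is in the bound on $\rho$. You assert it is ``the gap established by \citet{brito2015},'' but Brito et al.\ do not prove an Alon--Friedman bound for the adjacency matrix of the regular SBM restricted to $W$; their recovery argument is built around a self-avoiding-walk operator of length $\sim\log n$ and proves concentration for \emph{that} matrix, precisely because a direct Friedman bound for this model was not at hand. You also say the present paper's machinery ``carries over verbatim,'' but it does not: Theorem~\ref{thm:gap_B}, Lemma~\ref{A2B}, and Sections~\ref{sec:matrix_decomp}--\ref{sec:norm_bounds} exploit bipartite biregular structure throughout --- the $2\times 2$ block form of $B$, the rank-two centering built from $\onealpha$ and $\onemalpha$, and the bookkeeping via $\nvright$ versus $\nedge-\nvright$. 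The regular SBM is not bipartite, the leading eigenspace of its $B$ that must be centred out is spanned by lifts of \emph{both} $\1$ and $s$ (rank four), and there is no bipartite split of oriented edges. For even $d_1$ one could fall back on the random-lift theorem of \citet{bordenave2015} (the regular SBM is then a lift of a two-vertex multigraph with $d_1/2$ loops per vertex and $d_2$ crossing edges), but for odd $d_1$ the configuration model is not known to be contiguous to any lift --- exactly the obstruction discussed in Section~\ref{sec:1.2}. So while your reduction is clean and arguably more transparent than the SAW argument, the load-bearing spectral gap is mis-cited as available and would genuinely require a new, non-bipartite Friedman-type theorem, not a routine transfer of this paper's bipartite argument.
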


Theorem~\ref{thm:RSBM_condition} gives a sharp bound on the degrees for recovery,
which we can compare to our spectral clustering results.
The eigenvalues of $D$ are $d_1 + d_2$ and $d_1 - d_2$,
and the Markov matrix of the frame $R$ has eigenvalues $1$ and $(d_1 - d_2) / (d_1 + d_2)$.
The diagonal blocks $L^{(11)}$ and $L^{(22)}$ each correspond to the
Laplacian matrix of a
$d_1$-regular random graph on $n$ vertices, 
whereas the off-diagonal block term $M^{(12)}$ corresponds to the Laplacian
of a $d_2$-regular bipartite graph on $2n$ vertices.
Using our results and the previously known results for regular random graphs
(\citet*{friedman2003a,friedman2004,bordenave2015a}),
we can pick some 
$C > 2 \sqrt{d_2 - 1}/d_2$ 
since $d_1 > d_2$ and we will eventually take the degrees to be large.
Using Proposition~\ref{thm:spurious_eig_bound},
we find that the spurious eigenvalues of $P$ 
come after the leading 2 eigenvalues if
\[
\frac{2 \sqrt{d_2 - 1}}{d_2} < \frac{d_1 - d_2}{d_1 + d_2},
\]
to leading order in the degrees.
Rearranging, we obtain the condition
\[
(d_1 - d_2)^2 > 4 (d_2 - 1) \left( \frac{d_1 + d_2}{d_2} \right)^2 .
\]
Assuming $d_2/d_1 = \beta < 1$ fixed, and taking the limit $d_1, d_2 \to \infty$,
we find that the result of \citet*{brito2016} becomes
\[
d_1 > 4 \frac{1+\beta}{(1-\beta)^2} + o(1),
\]
whereas our result becomes
\[
d_1 > \frac{4}{\beta} \left( \frac{1+\beta}{1-\beta} \right)^2 + o(1),
\]
illustrating that the spectral threshold is a factor of $(1+\beta)/\beta$ weaker.

\section{Application: Low density parity check or expander codes}
\label{sec:codes}

Another useful application of random graphs is as expanders,
loosely defined as graphs where the neighborhood of a small set of nodes is large.
Expander codes, also called low density parity check (LDPC) codes,
were first introduced by Gallager in his PhD thesis
(\citet*{gallager1962}).
These are a family of linear error correcting codes whose parity-check matrix 
is encoded in an expander graph.
A linear code is a set $\mathcal{C} \subset \Sigma^L$,
where a length $L$ codeword
$x \in \mathcal{C}$ if and only if
$H x = 0$.
The alphabet $\Sigma$ is typically a finite field
and $H \in \Sigma^{P \times L}$ is the parity check matrix.
In the simplest case, $\Sigma = \mathbb{F}_2$ and each row of $H$ 
can be interpreted as a parity constraint on codewords.
The performance of such codes depends on how good an expander that graph is,
which in turn can be shown to depend on the separation of eigenvalues.
For a good introduction and overview of the subject, 
see \citet*{richardson2008}.

Following \citet*{tanner1981}, 
we construct a code $\mathcal{C}$ 
from a $(d_1,d_2)$-regular bipartite graph $G$
on $n + m$ vertices
and two smaller linear codes 
$\mathcal{C}_1$ and $\mathcal{C}_2$ of length
$d_1$ and $d_2$, respectively.
We write 
$\mathcal{C}_1 = [d_1, k_1, \delta_1]$ 
and
$\mathcal{C}_2 = [d_2, k_2, \delta_2]$
with the usual convention
of length, dimension, and minimum distance.
We assume the codes are all binary, using the finite field $\mathbb{F}_2$ 
the codeword is $x \in \mathcal{C} \subset \mathbb{F}_2^{|E|}$
where  $|E| = n d_1 = m d_2$.
That is, we associate a bit to each edge in the graph bipartite graph $G$.
Let $( e_i(v) )_{i=1}^{d_v}$ 
represent the set of edges incident to a vertex $v$
in some arbitrary, fixed order.
Then the vector $x \in \mathcal{C}$ if and only if the vectors
$( x_{e_1(u)}, x_{e_2(u)}, \ldots, x_{e_{d_1}(u)} )^T \in \mathcal{C}_1$
for all $u \in V_1$ and
$( x_{e_1(v)}, x_{e_2(v)}, \ldots, x_{e_{d_2}(v)} )^T \in \mathcal{C}_2$
for all $v \in V_2$.
The final code $\mathcal{C}$ is also linear.
With this construction, the code $\mathcal{C}$ 
has rate at least $k_1 / d_1 + k_2/d_2 - 1$ (\citet*{tanner1981}).

Furthermore, \citet*{janwa2003} proved the following bound on
the minimum distance of the resulting code:
\begin{theorem}
\label{thm:JanwaLal}
Suppose $\delta_1 \geq \delta_2 > \eta/2$, where $\eta$ is the second largest eigenvalue of the adjacency matrix of $G$. Then the code $\mathcal{C}$ has minimum distance
\[
\delta \geq \frac{n}{d_2} \left( \delta_1 \delta_2 - \frac{\eta}{2} (\delta_1 + \delta_2 ) \right).
\]

\end{theorem}

\begin{corollary}
\label{cor:dist_bound}
Suppose the code $\mathcal{C}$ is constructed from a biregular, bipartite random graph 
$G \sim \mathcal{G}(n,m, d_1, d_2)$ and the conditions of Theorem~\ref{thm:JanwaLal} hold.
Then the minimum distance of $\mathcal{C}$ satisfies
\[
\delta \geq \frac{n}{d_2} \left( \delta_1 \delta_2 - 
\frac{\sqrt{d_1 - 1} + \sqrt{d_2 - 1}}{2} (\delta_1 + \delta_2 ) - \epsilon_n \right) .
\]
\end{corollary}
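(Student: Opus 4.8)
The plan is to obtain this as an immediate consequence of Theorem~\ref{thm:JanwaLal} combined with the spectral gap estimate Theorem~\ref{cor:gap_A}(i). First I would invoke Theorem~\ref{cor:gap_A}(i): for $G \sim \mathcal{G}(n,m,d_1,d_2)$ with $d_1 \ge d_2$, the second largest eigenvalue of the adjacency matrix satisfies
\[
\eta \le \sqrt{d_1-1}+\sqrt{d_2-1}+\epsilon_n'
\]
asymptotically almost surely, with $\epsilon_n' \to 0$ as $n \to \infty$. I would then work on the event $\mathcal{A}_n$ that this bound holds, which has probability tending to $1$. Under the standing hypothesis $\delta_1 \ge \delta_2$ of the corollary, and assuming as stated that the conditions of Theorem~\ref{thm:JanwaLal} hold, on $\mathcal{A}_n$ we have
\[
\delta \ge \frac{n}{d_2}\left(\delta_1\delta_2 - \frac{\eta}{2}(\delta_1+\delta_2)\right).
\]

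Next I would use the monotonicity of the right-hand side in $\eta$: the coefficient of $\eta$ is $-\tfrac{n}{2d_2}(\delta_1+\delta_2)$, which is negative since $\delta_1,\delta_2>0$, so replacing $\eta$ by its upper bound on $\mathcal{A}_n$ only decreases the right-hand side. This gives, on $\mathcal{A}_n$,
\[
\delta \ge \frac{n}{d_2}\left(\delta_1\delta_2 - \frac{\sqrt{d_1-1}+\sqrt{d_2-1}}{2}(\delta_1+\delta_2) - \frac{\epsilon_n'}{2}(\delta_1+\delta_2)\right).
\]
Setting $\epsilon_n := \tfrac{1}{2}\epsilon_n'(\delta_1+\delta_2)$, which tends to $0$ because $\delta_1$ and $\delta_2$ are fixed constants depending only on the inner codes $\mathcal{C}_1,\mathcal{C}_2$, yields exactly the claimed inequality, and it holds asymptotically almost surely since $\mathbb{P}(\mathcal{A}_n)\to 1$.

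There is no substantial obstacle: the analytic content is entirely supplied by Theorem~\ref{cor:gap_A}(i), and the remainder is bookkeeping. The only point deserving a word of care is the interplay between the deterministic hypothesis $\delta_1\ge\delta_2>\eta/2$ of Theorem~\ref{thm:JanwaLal} and the random quantity $\eta$: since $\eta \le \sqrt{d_1-1}+\sqrt{d_2-1}+o(1)$ a.a.s., this hypothesis is consistent (and, for $n$ large, automatically in force on $\mathcal{A}_n$) precisely when $\delta_2 > \tfrac{1}{2}(\sqrt{d_1-1}+\sqrt{d_2-1})$, so the statement should be read with that understanding, and the monotonicity argument above is what allows the probabilistic upper bound on $\eta$ to be substituted into the Janwa--Lal estimate while preserving the inequality.
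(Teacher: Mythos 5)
Your proposal is correct and follows the same route as the paper, which simply substitutes the high-probability bound on $\eta$ from Theorem~\ref{cor:gap_A}(i) into the Janwa--Lal estimate of Theorem~\ref{thm:JanwaLal} (the paper does not spell out the monotonicity step or the rescaling of $\epsilon_n'$ into $\epsilon_n$, but these are the same bookkeeping moves you perform). Your closing remark on the compatibility of the random event $\eta \le \sqrt{d_1-1}+\sqrt{d_2-1}+\epsilon_n'$ with the deterministic hypothesis $\delta_2 > \eta/2$ is a welcome extra precision that the paper leaves implicit.
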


We see that these Tanner codes will have maximal distance for smallest $\eta$,
and used our main result, Theorem~\ref{cor:gap_A}, to obtain the explicit
bound in Corollary~\ref{cor:dist_bound}.
By growing the graph, the above shows a way to construct arbitrarily large codes whose
minimum distance remains proportional to the code size $n d_1$.
That is, the relative distance $\delta / (n d_1)$ 
is bounded away from zero as $n \to \infty$.
However, the above bound will only be useful if it yields a positive result, 
which depends on the codes $\mathcal{C}_1$ and $\mathcal{C}_2$ as well as the degrees.

\begin{remark}
In general, the performance guarantees on LDPC codes that are obtainable 
from graph eigenvalues are weaker than those that come from other methods.
Although our method does guarantee high distance for some high degree codes, 
analysis of specific decoding algorithms or a probabilistic expander analyses
yield better bounds that work for lower degrees (\citet*{richardson2008}).
\end{remark}

\subsection{Example: An unbalanced code based on a $(14,9)$-regular bipartite graph}

We illustrate the applicability of our distance bound with an example.
Let $\mathcal{C}_1 = [14, 8, 7]$ and $\mathcal{C}_2 = [9, 4, 6]$.
These can be achieved by using a Reed-Salomon code
on the common field $\mathbb{F}_{q}$ for any $q > 14$ (\citet*{richardson2008}).
We take $q = 2^4 = 16$ for inputs that are actually binary, and
this means each edge in the graph actually contains 4 bits of information.
Employing Corollary~\ref{cor:dist_bound}, 
the Tanner code $\mathcal{C}$
will have relative minimum distance 
$\delta / (n d_1) \geq 0.0014$ and 
rate at least $0.016$.
Taking $n = 216$ and $m = 336$ gives the code a minimum distance of 
at least 4.

\section{Application: Matrix completion}
\label{sec:matrix_completion}

Assume we have some matrix $Y \in \R^{n \times m}$ 
which has low ``complexity.'' 
Perhaps it is low-rank or simple by some other measure.
If we observe $Y_{ij}$ for a limited set of entries
$(i,j) \in E \subset [n] \times [m]$,
then {\it matrix completion} is any method which
constructs a matrix $\hat{Y}$
so that $\| \hat{Y} - Y\|$ is small, or even zero.
Matrix completion has attracted significant attention in recent years as a tractable
algorithm for making recommendations to users of online systems based on the
tastes of other users (a.k.a.\ the Netflix problem).
We can think of it as the matrix version of 
compressed sensing (\citet*{candes2010, candes2010a}).

Recently, a number of authors have studied the performance of 
matrix completion algorithms where the index set $E$ is the edge
set of a regular random graph 
(\citet*{heiman2014, bhojanapalli2014,gamarnik2017}).
\citet*{heiman2014} describe a 
{\it deterministic} method of matrix completion,
where they can give performance guarantees for a fixed observation set $E$ 
over many input matrices $Y$.
The error of their reconstruction depends on the spectral gap of the graph.
We expand upon the result of \citet*{heiman2014}, 
extending it to rectangular matrix and improving their bounds in the process.

\subsection{Matrix norms as measures of complexity and their relationships}

We will employ a number of different matrix and vector norms in this Section.
These are all related by the properties of the underlying Banach spaces.
The complexity of $Y$ 
is measured using a factorization norm (also called the max-norm):
\[
	\gamma_2 (Y) = \min_{UV^* = Y} \|U\|_{\ell_2 \to \ell_\infty^n} \|V\|_{\ell_2 \to \ell_\infty^m}.
\]
The minimum is taken over all possible factorizations of $Y = UV^*$, 
and the norm 
% $\| \cdot \|_{\ell_2 \to \ell_\infty^n}$
$\| X \|_{\ell_2 \to \ell_\infty^n} = \max_i \sqrt{\sum_j X^2_{ij}}$
returns the largest $\ell_2$ norm of a row.
So, equivalently,
\[
	\gamma_2 (Y) = \min_{U V^* = Y} \max_{i,j} \|u_i \|_2 \, \|v_j \|_2 ,
\]
where $u_i$ and $v_i$ are the rows of $U$ and $V$.
See \citet*{linial2007} for a number of results about the norm $\gamma_2$.
In particular, note that
\begin{align}
\gamma_2(A \circ B) \leq \gamma_2(A) \gamma_2(B) \label{eq:submultiplicative} \\
  \label{eq:trace_norm_equiv}
  \frac{1}{\sqrt{n m}} \| Y \|_{\rm Tr} \leq \gamma_2(Y) \\ % \leq \| Y \|_{\rm Tr} \\
  \label{eq:rank_equiv}
  \gamma_2(Y) \leq \sqrt{\mathrm{rank} (Y) } \| Y \|_{\infty}.
\end{align}
Property \eqref{eq:submultiplicative} says that $\gamma_2$
is sub-multiplicative under the Hadamard product \citep*{lee2008,heiman2014} and will be used in our proof.
Properties \eqref{eq:trace_norm_equiv} and \eqref{eq:rank_equiv}
relate $\gamma_2$ to two common complexity measures of matrices,
the trace norm (sum of singular values, i.e.\ the $\ell^m_2 \to \ell^n_2$ nuclear norm) 
and rank.
Note also the well-known fact that
\[
\| Y \|_{\rm Tr} = \min_{UV^* = Y} \| U \|_F \|V \|_F ,
\]
where 
% $\| X \|_F = \sqrt{\sum_{i=1}^n \sum_{j=1}^m X_{ij}^2}$ 
$\| X \|_F = \sqrt{\sum_{ij} X_{ij}^2}$ 
is the Frobenius norm.
We see that the trace norm constrains factors $U$ and $V$ to be small on average
via $\| \cdot \|_F$,
whereas the norm $\gamma_2$ is similar but constrains factors uniformly
via $\| \cdot \|_{\ell_2 \to \ell_\infty^n}$.
However, we should note that computing $\gamma_2(Y)$ is more costly than the
trace norm, which can be performed with just the singular value decomposition,
although still possible in polynomial time with convex programming \citep*{heiman2014}.

% There is one last norm we must introduce.
% \todo{talk about ``nuclear norm'' $\nu$}

\subsection{Matrix completion generalization bounds}

The method of matrix completion that we study
is to return the matrix $X$ which is the solution to:
\begin{equation}
\label{matrix_completion}
\begin{aligned}
& \underset{X}{\text{minimize}}
& & \gamma_2(X) \\
& \text{subject to}
& & X_{ij} = Y_{ij}, \; (i,j) \in E .
\end{aligned}
\end{equation}
The $\gamma_2$ norm was first proposed by
\citet*{srebro2005,srebro2005a} 
as a robust complexity measure,
and it was shown to be an effective and practical regularization
on real datasets \citep*{lee2010a,recht2013}.

\citet*{heiman2014} analyze the performance
of the convex program \eqref{matrix_completion} 
for a square matrix $Y$
using an expander argument,
assuming that $E$ is the edge set of a $d$-regular graph 
with second eigenvalue $\eta$.
They obtain the following theorem:
\begin{theorem}[\citet*{heiman2014}]
\label{thm:square_MSE}
Let $E$ be the set of edges of a $d$-regular graph with second eigenvalue bound $\eta$.
For every $Y \in \R^{n \times n}$, if $\hat{Y}$ is the output of the optimization problem 
\eqref{matrix_completion}, then 
\[
	\frac{1}{n^2} \| \hat{Y} - Y \|_F^2 \leq c \gamma_2(Y)^2 \frac{\eta}{d},
\]
where $c = 8 K_G \leq 14.3$ is a universal constant and $\| \cdot \|_F$ is the Frobenius norm.
\end{theorem}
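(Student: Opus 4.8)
The plan is to run the expander--duality argument of \citet{heiman2014}, but organised so that the constant comes out as $4K_G$, from which the stated bound $c=8K_G$ follows \emph{a fortiori} (this factor-two sharpening is exactly the improvement we claim). Throughout I will write $\langle M,N\rangle=\sum_{ij}M_{ij}N_{ij}$ for the entrywise inner product of equally-shaped matrices, $M\circ N$ for their Hadamard product, $J$ for the all-ones matrix, and $\gamma_2^*$ for the norm dual to $\gamma_2$ under $\langle\cdot,\cdot\rangle$.

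First I would pass to the error matrix $Z:=\hat Y-Y$. A minimiser exists since $\gamma_2$ is a norm and $Y$ is itself feasible; feasibility of $Y$ also forces $\gamma_2(\hat Y)\le\gamma_2(Y)$, so the triangle inequality gives $\gamma_2(Z)\le 2\gamma_2(Y)$, and $Z_{ij}=0$ whenever $(i,j)\in E$, i.e.\ on the support of $A$. Hence it suffices to prove $\tfrac1{n^2}\|Z\|_F^2\le K_G\tfrac{\eta}{d}\gamma_2(Z)^2$. Next I would record two elementary facts about the Hadamard square. (i) $\gamma_2$ is submultiplicative under Hadamard squaring, $\gamma_2(Z\circ Z)\le\gamma_2(Z)^2$: if $Z=UV^*$ with rows $u_i$ of $U$ and $v_j$ of $V$, then $(Z\circ Z)_{ij}=\langle u_i,v_j\rangle^2=\langle u_i\otimes u_i,\ v_j\otimes v_j\rangle$, exhibiting $Z\circ Z$ as a product of matrices with row norms $\|u_i\|^2,\|v_j\|^2$, and one takes the infimum over factorisations. (ii) Since $Z$ vanishes on the support of $A$, $\langle A,Z\circ Z\rangle=\sum_{ij}A_{ij}Z_{ij}^2=0$, whereas $\langle J,Z\circ Z\rangle=\|Z\|_F^2$.

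The heart of the argument is then the identity
\[
\|Z\|_F^2=\langle J,Z\circ Z\rangle=\frac nd\,\big\langle \tfrac dn J-A,\ Z\circ Z\big\rangle,
\]
valid by (ii), together with the spectral fact that for a $d$-regular graph $\tfrac dn J=d\cdot\tfrac1n\1_n\1_n^*$ is $d$ times the orthogonal projection onto the Perron vector $\1_n$, so $\tfrac dn J-A$ has eigenvalues $\{0,-\lambda_2,\dots,-\lambda_n\}$ and operator norm exactly $\eta$. I would finish by invoking the duality bound $\langle M,N\rangle\le\gamma_2^*(M)\,\gamma_2(N)$ together with Grothendieck's inequality $\gamma_2^*(M)\le K_G\|M\|_{\ell_\infty\to\ell_1}$ (see \citet{linial2007}), where $\|M\|_{\ell_\infty\to\ell_1}=\max_{x,y\in\{\pm1\}^n}y^*Mx\le n\|M\|$ since $\|x\|_2\le\sqrt n\|x\|_\infty$. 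Applying this with $M=\tfrac dn J-A$, $N=Z\circ Z$ and using (i) gives $\|Z\|_F^2\le\tfrac nd\,K_G\,n\,\eta\,\gamma_2(Z)^2=K_G\tfrac{n^2\eta}{d}\gamma_2(Z)^2$; dividing by $n^2$ and inserting $\gamma_2(Z)\le 2\gamma_2(Y)$ yields $\tfrac1{n^2}\|\hat Y-Y\|_F^2\le 4K_G\tfrac{\eta}{d}\gamma_2(Y)^2$.

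I do not expect a genuine obstacle: the argument is an assembly of known ingredients (submultiplicativity of $\gamma_2$, Grothendieck's inequality, and the expander spectral bound) in the right order, the one idea being to route everything through the Hadamard square and the centred adjacency matrix $\tfrac dn J-A$. The only delicate point is constant bookkeeping --- the factor $2$ in $\gamma_2(Z)\le 2\gamma_2(Y)$ and the estimate $\|\cdot\|_{\ell_\infty\to\ell_1}\le n\|\cdot\|$ are precisely where looseness reintroduces the extra factor of two. For the rectangular extension $Y\in\R^{n\times m}$ I would replace $A$ by the $n\times m$ block $X$: the biregularity relation $nd_1=md_2$ makes $\tfrac{d_1}{m}J=\tfrac{d_2}{n}J$ the leading rank-one term of $X$, so $\|\tfrac{d_1}{m}J-X\|$ is the second singular value of $X$, namely $\eta$; combined with $\|M\|_{\ell_\infty\to\ell_1}\le\sqrt{nm}\,\|M\|$ and the identity $\tfrac{m}{d_1}=\sqrt{nm/(d_1d_2)}$ this turns $n^2$ into $nm$ and $d$ into $\sqrt{d_1d_2}$, and Theorem~\ref{cor:gap_A} then makes the bound explicit in $d_1,d_2$.
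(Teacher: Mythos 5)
Your proof is correct, and it establishes the stronger constant $c=4K_G$ (which also appears in the paper's Theorem~\ref{thm:rect_MSE}), from which the cited $c=8K_G$ follows a fortiori. Note that Theorem~\ref{thm:square_MSE} is quoted from \citet{heiman2014} without proof; the natural comparison is to the paper's own proof of Theorem~\ref{thm:rect_MSE}, which generalizes and sharpens it.

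The routes are genuinely different in their mechanics, though they share Grothendieck's inequality as the engine. The paper's proof works in the primal: it decomposes the squared-residual matrix into rank-one sign matrices $S=uv^*$, applies a bipartite expander mixing lemma to each (and squeezes the factor of two by shifting $S\mapsto\tfrac12(S+J)$ and combining the $(A,B)$ and $(A^c,B^c)$ contributions without over-estimating), and then invokes Grothendieck in the form $\nu(R)\le K_G\gamma_2(R)$, where $\nu$ is the sign nuclear norm. You work entirely in the dual: you write $\|Z\|_F^2=\tfrac nd\langle\tfrac dn J-A,\ Z\circ Z\rangle$, bound this by $\gamma_2^*\bigl(\tfrac dn J-A\bigr)\gamma_2(Z\circ Z)$, invoke Grothendieck in the form $\gamma_2^*(M)\le K_G\|M\|_{\ell_\infty\to\ell_1}$, pass from $\|\cdot\|_{\ell_\infty\to\ell_1}$ to the spectral norm by Cauchy--Schwarz, and use the Hadamard submultiplicativity $\gamma_2(Z\circ Z)\le\gamma_2(Z)^2$ directly (the paper obtains the same estimate but phrases it as $\gamma_2(R)\le(\gamma_2(\hat Y)+\gamma_2(Y))^2$). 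The two proofs are morally the same computation — the expander mixing lemma is precisely the $\|M\|_{\ell_\infty\to\ell_1}\le n\|M\|$ estimate evaluated on indicator vectors — but yours is more linear-algebraic, avoids the rank-one-sign-matrix detour and the explicit mixing lemma, and makes transparent exactly where the centred adjacency operator $\tfrac dn J-A$ and the dual norm enter. Your sketch of the rectangular case, identifying $\tfrac{d_1}{m}J=\tfrac{d_2}{n}J$ as the Perron rank-one part of $X$ and $\tfrac{m}{d_1}=\sqrt{nm/(d_1d_2)}$, is also accurate and recovers the paper's Theorem~\ref{thm:rect_MSE}.
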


Considering sampling following the biadjacency matrix of a bipartite graph,
we find a similar result which also applies to rectangular matrices. 
If $n=m$ and $d_1 = d_2 = d$, our bound is equivalent to that of Theorem~\ref{thm:square_MSE},
but with constants improved by a factor of two due to stronger mixing in bipartite graphs.
Intuitively, using a biadjacency matrix is a ``more random'' way of sampling 
than using an adjacency matrix, since it is not symmetric.

\begin{theorem}
\label{thm:rect_MSE}
Let $E$ be the set of edges of a $(d_1, d_2)$-regular graph
with second eigenvalue bound $\eta$.
For every $Y \in \R^{n \times m}$, if $\hat{Y}$ is the output of the optimization problem 
\eqref{matrix_completion}, then 
\[
	\frac{1}{n m} \| \hat{Y} - Y \|_F^2 \leq 
      c \gamma_2(Y)^2 \frac{\eta}{\sqrt{d_1 d_2}} ,
\]
where $c = 4 K_G \leq 7.13$.
\end{theorem}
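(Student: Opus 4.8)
The plan is to adapt the spectral argument of \citet{heiman2014} to the bipartite, biregular setting; the extra factor of two over their constant $8K_G$ will come from the fact that $X$ is a genuine rectangular block, with no diagonal and no symmetry constraint to account for. First I would set $Z = \hat{Y} - Y$. Since $\hat{Y}$ and $Y$ agree at every $(i,j) \in E$, we have $Z_{ij} = 0$ there; since $Y$ is feasible for \eqref{matrix_completion} while $\hat{Y}$ is optimal, $\gamma_2(\hat{Y}) \le \gamma_2(Y)$, so subadditivity of $\gamma_2$ gives $\gamma_2(Z) \le \gamma_2(\hat{Y}) + \gamma_2(Y) \le 2\gamma_2(Y)$. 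Writing $X$ for the $n \times m$ off-diagonal block of the adjacency matrix in Eqn.~\eqref{eq:matrix_blocks}, its leading singular value is $\sqrt{d_1 d_2}$, attained by the normalized all-ones vectors, so its best rank-one approximation is $\tfrac{d_1}{m}\1_n\1_m^{*}$ and the matrix $W := X - \tfrac{d_1}{m}\1_n\1_m^{*}$ has $\|W\| = \sigma_2(X) = |\lambda_2(A)| \le \eta$.

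The key observation is that the entrywise square $Z \circ Z$ is killed by the edge set: $\langle X, Z \circ Z \rangle = \sum_{(i,j) \in E} Z_{ij}^2 = 0$. Since $\langle \1_n\1_m^{*}, Z \circ Z \rangle = \|Z\|_F^2$, subtracting $\tfrac{m}{d_1}$ times the first identity from the second gives
\[
\|Z\|_F^2 = \Big\langle \1_n\1_m^{*} - \tfrac{m}{d_1}X,\ Z \circ Z \Big\rangle = -\tfrac{m}{d_1}\,\langle W, Z \circ Z \rangle,
\]
so the whole problem reduces to bounding $|\langle W, Z \circ Z \rangle|$ by a constant multiple of $\eta\,\gamma_2(Z)^2$.

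For that bound I would fix an (essentially) optimal factorization $Z = U V^{*}$ with rows $u_i, v_j$ realizing $\gamma_2(Z) = \max_{i,j} \|u_i\|_2 \|v_j\|_2$, rescaled so that $\max_i \|u_i\|_2 = \max_j \|v_j\|_2 = \sqrt{\gamma_2(Z)}$; then $\|u_i \otimes u_i\|_2 = \|u_i\|_2^2 \le \gamma_2(Z)$, and similarly for the $v_j$. Using $Z_{ij}^2 = \langle u_i, v_j \rangle^2 = \langle u_i \otimes u_i,\ v_j \otimes v_j \rangle$, dividing these tensor-squared rows by $\gamma_2(Z)$ puts them in the unit ball, so Grothendieck's inequality yields
\[
|\langle W, Z \circ Z \rangle| = \Big| \sum_{i,j} W_{ij}\, \langle u_i \otimes u_i,\ v_j \otimes v_j \rangle \Big| \le K_G\, \gamma_2(Z)^2\, \|W\|_{\infty \to 1},
\]
where $\|W\|_{\infty \to 1} = \max_{s \in \{\pm 1\}^n,\ t \in \{\pm 1\}^m} |s^{\top} W t|$. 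The spectral bound gives the bipartite expander-mixing estimate $\|W\|_{\infty \to 1} \le \|W\|\,\sqrt{n}\,\sqrt{m} \le \eta \sqrt{nm}$. Substituting into the identity above and using $\tfrac{m}{d_1} = \sqrt{nm/(d_1 d_2)}$ (just $n d_1 = m d_2$ rearranged),
\[
\frac{1}{nm}\|Z\|_F^2 \le \frac{K_G\,\eta}{\sqrt{d_1 d_2}}\,\gamma_2(Z)^2 \le \frac{4 K_G\,\eta}{\sqrt{d_1 d_2}}\,\gamma_2(Y)^2,
\]
which is the claim with $c = 4 K_G \le 7.13$.

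The main obstacle is the third step: one has to recognize that $Z \circ Z$ factors through the tensor squares of the rows of $U$ and $V$ so that Grothendieck's inequality is applicable, keep the normalization of the factorization consistent so the prefactor is exactly $\gamma_2(Z)^2$, and convert the spectral gap $\|W\| \le \eta$ into the cut-norm bound on $W$. The remaining ingredients — the feasibility and subadditivity argument bounding $\gamma_2(Z)$, the singular-value structure of $X$, and the arithmetic via $n d_1 = m d_2$ — are routine. It is worth stressing that the improvement of the constant from $8 K_G$ to $4 K_G$ is structural, not a sharpening of inequalities: in the square, $d$-regular case one must symmetrize or handle the diagonal separately, at the cost of a factor of two that never appears for the rectangular block $X$.
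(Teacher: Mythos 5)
Your proof is correct and reaches the same constant $4K_G$, but it travels a genuinely different route than the paper. The paper reproduces the architecture of \citet{heiman2014}: it decomposes $R=(\hat Y - Y)\circ(\hat Y - Y)$ into rank-one sign matrices $S^i = 1_A 1_B^* + 1_{A^c}1_{B^c}^*$, bounds each term with the bipartite expander mixing lemma (picking up a $\sqrt{\alpha\beta(1-\alpha)(1-\beta)}\le 1/4$ saving), sums to get the sign nuclear norm $\nu(R)$, and only then imports Grothendieck through the duality $\nu(R)\le K_G\gamma_2(R)$ plus the Hadamard submultiplicativity $\gamma_2(Z\circ Z)\le \gamma_2(Z)^2$. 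You instead write the error identity exactly as an inner product with the centered adjacency block $W = X - (d_1/m)\1\1^*$, feed the tensor-square factorization $Z_{ij}^2 = \langle u_i\otimes u_i,\, v_j\otimes v_j\rangle$ directly into Grothendieck's bilinear-form inequality, and close with the elementary cut-norm bound $\|W\|_{\infty\to 1}\le\|W\|\sqrt{nm}\le\eta\sqrt{nm}$. The two routes are dual to each other (the sign nuclear norm is what Grothendieck converts $\gamma_2$ into), and the expander mixing lemma is likewise a disguised $\|W\|_{\infty\to 1}$ estimate, so it is not surprising the constants come out identical. Your version is arguably cleaner — it skips the rank-one sign matrix decomposition and the explicit mixing-lemma computation entirely — at the cost of having to verify the rescaling in the factorization of $Z$ and the Khatri–Rao algebra behind $Z\circ Z = (U\odot U)(V\odot V)^*$.

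One small caveat on the prose, not the math: your attribution of the factor-of-two improvement over Heiman et al.\ to ``no diagonal and no symmetry constraint'' is not quite the explanation the paper gives. The paper credits the savings to the fact that the bipartite expander mixing lemma combines the $(A,B)$ and $(A^c,B^c)$ terms under a single square root (giving $4\cdot\frac14 = 1$ rather than $2\cdot 1 = 2$). Since your route never splits into $A,B$ at all, the factor simply never appears in your derivation; the mechanism is slightly different than you describe, although the upshot is the same.
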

\begin{proof}
We start by considering a rank-1 sign matrix $S=u v^*$, where $u,v \in \{-1,1 \}^{n \times m}$.
Let $S' = \frac{1}{2} (S + J)$, where $J$ is the all-ones matrix, 
so that $S'$ has the entries of -1 in $S$ replaced by zeros.
Then
$S' = 1_A 1_B^* + 1_{A^c} 1_{B^c}^*$ for subsets 
$A \subset V_1 = [n]$ and $B \subset V_2 = [m]$,
where $A = \{ i: u_i = 1 \} $ and $B = \{ j : v_j = 1 \}$.
Consider the expression
\begin{align*}
\left| \frac{1}{n m} \sum_{i,j} s_{ij} - 
\frac{1}{|E|} \sum_{(i,j) \in E} s_{ij} \right|
&= 
\left| \frac{1}{n m} \sum_{i,j} (2 s'_{ij} - 1) - 
\frac{1}{|E|} \sum_{(i,j) \in E} (2 s'_{ij} - 1) \right| \\
&= 
2 \left| \frac{1}{n m} \sum_{i,j} s'_{ij} - 
\frac{1}{|E|} \sum_{(i,j) \in E} s'_{ij} \right| \\
&= 2 \left| \frac{|A||B| + |A^c| |B^c|}{n m}  - 
\frac{E(A,B) + E(A^c, B^c)}{|E|} \right| \\
&\leq 2 \left| \frac{|A||B|}{nm} - \frac{E(A,B)}{|E|} \right|
+ 
2 \left| \frac{|A^c| |B^c|}{n m}  - 
\frac{E(A^c, B^c)}{|E|} \right|.
\end{align*}
% Using a version of the expander mixing lemma, \citep*{janwa},
% \[
% \left| \frac{E(A,B)}{|E|} - \frac{|A||B|}{n m} \right|
% \leq \frac{\eta}{2 |E|} 
%    \left( |A| + |B| - \frac{|A|^2}{n} - \frac{|B|^2}{m} \right),
% \]
% \todo{does the last inequality really hold??}
% we obtain that
% \begin{align*}
% \left| \frac{1}{n m} \sum_{i,j} s_{ij} - 
% \frac{1}{|E|} \sum_{(i,j) \in E} s_{ij} \right|
% & \leq \frac{\eta}{|E|} 
% \left(  
% |A| + |B| - \frac{|A|^2}{n} - \frac{|B|^2}{m} 
%  +
%  |A^c| + |B^c| - \frac{|A^c|^2}{n} - \frac{|B^c|^2}{m}
% \right)\\
% & \leq \frac{\eta}{|E|} 
% \left( 
% n + m - \frac{|A|^2 + |A^c|^2}{n} - \frac{|B|^2 + |B^c|^2}{m}
% \right) \\
% &\leq \frac{\eta}{|E|} 
% \left( 
% n + m - n (x^2 + (1-x)^2) - m (y^2 + (1-y)^2)
% \right) \\
% &\leq 
% \frac{\eta}{2 |E|} (n + m) = 
% \frac{\eta}{2} \left( \frac{d_1 + d_2}{d_1 d_2} \right), 
% \end{align*}
% since $\frac{1}{2} \leq x^2 + (1-x)^2 \leq 1$ for $x \in [0,1]$.
The following is a bipartite version of the expander mixing lemma 
\citep*{dewinter2012}:
\[
\left| \frac{E(A,B)}{|E|} - \frac{|A||B|}{n m} \right|
\leq 
\frac{\eta}{\sqrt{d_1 d_2}} 
\sqrt{\frac{|A||B|}{nm} 
\left(1- \frac{|A|}{n} \right) \left( 1- \frac{|B|}{m} \right) } \\
= 
\frac{\eta}{\sqrt{d_1 d_2}} 
\sqrt{\frac{|A||B||A^c||B^c|}{(nm)^2}} .
\]
We find that
\begin{align*}
\left| \frac{1}{n m} \sum_{i,j} s_{ij} - 
\frac{1}{|E|} \sum_{(i,j) \in E} s_{ij} \right|
& \leq 
\frac{4 \eta}{\sqrt{d_1 d_2}}
\sqrt{\frac{|A||B||A^c||B^c|}{(nm)^2}}
\\
& = \frac{4 \eta}{\sqrt{d_1 d_2}}
\sqrt{ x y (1-x) (1-y) }\\
& \leq \frac{\eta}{\sqrt{d_1 d_2}},
\end{align*}
since $x y (1-x) (1-y)$ attains a maximal value of $2^{-4}$ 
for $0 \leq x,y \leq 1$.

The rest of the proof develops identical to the results of \citet*{heiman2014},
which we include for completeness.
We apply the result for rank-1 sign matrices to any matrix $R$.
Let $R = \sum_i \alpha_i S^i$, where $S^i$ is a rank-1 sign matrix and $\alpha_i \in \R$.
For a general matrix $R$, this might require many rank-1 sign matrices.
Define the sign nuclear norm $\nu (R) = \sum_i |\alpha_i|$. 
Then,
\[
\left| \frac{1}{n m} \sum_{i,j} r_{ij} - 
\frac{1}{|E|} \sum_{(i,j) \in E} r_{ij} \right|
\leq
\nu(R) \frac{\eta}{\sqrt{d_1 d_2}} .
\]
It is a consequence of Grothendieck's inequality,
a well-known theorem in functional analysis,
that there exists a universal constant 
$1.5 \leq K_G \leq 1.8$ 
so that
$\gamma_2 (X) \leq \nu(X) \leq K_G \gamma_2(X)$
for any real matrix $X$; see \citet*{heiman2014}.

Now, let the matrix of residuals 
$R = (\hat{Y}-Y) \circ (\hat{Y} - Y)$, where $\circ$ is the 
Hadamard entry-wise product of two matrices,
so that $R_{ij} = (\hat{Y}_{ij} - Y_{ij})^2$.
Since 
\[
\frac{1}{|E|} \sum_{(i,j) \in E} r_{ij} = 0 ,
\]
we conclude that 
\[
\frac{1}{n m} \sum_{i,j} r_{ij} 
\leq
\nu(R) \frac{\eta}{\sqrt{d_1 d_2}} 
\leq 
K_G \gamma_2(R) \frac{\eta}{\sqrt{d_1 d_2}}.
\]
Furthermore, $\gamma_2 (R) \leq \gamma_2 (\hat{Y}-Y)^2 \leq ( \gamma_2(\hat{Y}) + \gamma_2(Y) ) ^2 $
by \eqref{eq:submultiplicative} and the triangle inequality.
Since $\hat{Y}$ is the output of the algorithm and $Y$ is a feasible solution,
$\gamma_2 (\hat{Y}) \leq \gamma_2 (Y)$.
Thus, $\gamma_2(R) \leq 4 \gamma_2(Y)^2$ and the proof is finished.
\end{proof}

% \begin{remark}
% If we minimize the trace norm of the solution, 
% which is a more practical method than working with $\gamma_2$,
% the same bounds hold in terms of $\| Y \|_{\rm Tr}$.
% This is because $\gamma_2 (Y) \leq \| Y \|_{\rm Tr}$. 
% We only need to modify the final part of the proof.
% \end{remark}

\subsection{Noisy matrix completion bounds}

Furthermore, our analysis easily extends to the case where
the matrix we observe is corrupted with noise. 
As mentioned in the above remark, similar results will hold for the trace norm.
In the noisy case, we solve the problem
\begin{equation}
\label{matrix_completion_2}
\begin{aligned}
& \underset{X}{\text{minimize}}
& & \gamma_2(X) \\
& \text{subject to}
& & \frac{1}{|E|} \sum_{(i,j) \in E} ( X_{ij} - Z_{ij})^2 \leq \delta^2
\end{aligned}
\end{equation}
and obtain the following theorem:
\begin{theorem}
Suppose we observe $Z_{ij} = Y_{ij} + \epsilon_{ij}$ with bounded error
\[
 \frac{1}{|E|} \sum_{(i,j) \in E} \epsilon_{ij}^2 \leq \delta^2 .
\]
Then solving the optimization problem \eqref{matrix_completion_2}
will yield a bound of
\[
\frac{1}{n m} \| \hat{Y} - Y \|_F^2 \leq 
      c \gamma_2(Y)^2 \frac{\eta}{\sqrt{d_1 d_2}} + 4 \delta^2,
\]
where $c = 4 K_G \leq 7.13$.
\end{theorem}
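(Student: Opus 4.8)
The plan is to follow the proof of Theorem~\ref{thm:rect_MSE} almost verbatim, inserting one triangle-inequality step to absorb the noise.

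First I would check that $Y$ is feasible for the program \eqref{matrix_completion_2}: by hypothesis $\frac{1}{|E|}\sum_{(i,j)\in E}(Y_{ij}-Z_{ij})^2 = \frac{1}{|E|}\sum_{(i,j)\in E}\epsilon_{ij}^2 \le \delta^2$. Hence the minimizer satisfies $\gamma_2(\hat Y)\le\gamma_2(Y)$, and exactly as in the noiseless case $\gamma_2(R)\le 4\gamma_2(Y)^2$ for the Hadamard square $R=(\hat Y-Y)\circ(\hat Y-Y)$, since $\gamma_2(R)\le\gamma_2(\hat Y-Y)^2\le(\gamma_2(\hat Y)+\gamma_2(Y))^2$.

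Next I would control the reconstruction error on the observed entries. Writing $\hat Y_{ij}-Y_{ij}=(\hat Y_{ij}-Z_{ij})+(Z_{ij}-Y_{ij})$ and applying the triangle inequality in $\ell^2(E)$,
\[
\left(\frac{1}{|E|}\sum_{(i,j)\in E}(\hat Y_{ij}-Y_{ij})^2\right)^{1/2}\le\left(\frac{1}{|E|}\sum_{(i,j)\in E}(\hat Y_{ij}-Z_{ij})^2\right)^{1/2}+\left(\frac{1}{|E|}\sum_{(i,j)\in E}\epsilon_{ij}^2\right)^{1/2}\le\delta+\delta,
\]
using feasibility of $\hat Y$ for the first root and the noise bound for the second. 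Squaring gives $\frac{1}{|E|}\sum_{(i,j)\in E}R_{ij}\le 4\delta^2$, which replaces the exact equality to zero available in the noiseless proof.

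Finally I would invoke the expander-mixing transfer already established inside the proof of Theorem~\ref{thm:rect_MSE}: decomposing $R$ into rank-one sign matrices, applying the bipartite expander mixing lemma, and bounding the sign nuclear norm via Grothendieck's inequality through $\nu(R)\le K_G\gamma_2(R)$ yields
\[
\left|\frac{1}{nm}\sum_{i,j}R_{ij}-\frac{1}{|E|}\sum_{(i,j)\in E}R_{ij}\right|\le K_G\,\gamma_2(R)\,\frac{\eta}{\sqrt{d_1 d_2}}.
\]
Combining the three displays,
\[
\frac{1}{nm}\|\hat Y-Y\|_F^2=\frac{1}{nm}\sum_{i,j}R_{ij}\le\frac{1}{|E|}\sum_{(i,j)\in E}R_{ij}+K_G\,\gamma_2(R)\,\frac{\eta}{\sqrt{d_1 d_2}}\le 4\delta^2+4K_G\,\gamma_2(Y)^2\,\frac{\eta}{\sqrt{d_1 d_2}},
\]
which is the claim with $c=4K_G\le 7.13$. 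There is no genuine obstacle here; the only new ingredient over Theorem~\ref{thm:rect_MSE} is the $\ell^2(E)$ triangle inequality, and the only care required is to verify feasibility of $Y$ and to track the constant $4$ in the $4\delta^2$ term. As in the noiseless case, the same bound holds with $\|Y\|_{\rm Tr}$ in place of $\gamma_2(Y)$, since $\gamma_2(Y)\le\|Y\|_{\rm Tr}$.
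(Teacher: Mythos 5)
Your proposal is correct and matches the paper's proof essentially step for step: you establish feasibility of $Y$ to get $\gamma_2(\hat Y)\le\gamma_2(Y)$ and hence $\gamma_2(R)\le 4\gamma_2(Y)^2$, you use the triangle inequality on $\ell^2(E)$ to bound $\frac{1}{|E|}\sum_{(i,j)\in E}R_{ij}\le 4\delta^2$, and you transfer via the expander mixing estimate $\bigl|\frac{1}{nm}\sum R_{ij}-\frac{1}{|E|}\sum_E R_{ij}\bigr|\le K_G\gamma_2(R)\eta/\sqrt{d_1d_2}$, exactly as the paper does (the paper phrases the middle step in terms of the sampling operator $\mathcal{P}_E$, but the content is identical). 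No gaps.
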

\begin{proof}
Denote $\hat{Y}$ the solution to 
P\ref{matrix_completion_2}.
It will be useful to introduce the sampling operator
$\mathcal{P}_E : \R^{n \times m} \to \R^{n \times m}$,
where
$(\mathcal{P}_E (X) )_{ij} = X_{ij}$
if $(i,j) \in E$ and 0 otherwise.
Again let $R = (\hat{Y} - Y) \circ (\hat{Y} - Y)$
be the matrix of squared errors,
then 
\begin{align*}
\left| 
\frac{1}{nm} \| \hat{Y} - Y \|_F^2 - 
\frac{1}{|E|} \| \mathcal{P}_E (\hat{Y} - Y) \|_F^2 
\right| 
&=
\left| \frac{1}{n m} \sum_{i,j} (\hat{Y}_{ij} - Y_{ij})^2 - 
\frac{1}{|E|} \sum_{(i,j) \in E} (\hat{Y}_{ij} - Y_{ij})^2 \right| \\
& \leq K_G \gamma_2(R) \frac{\eta}{\sqrt{d_1 d_2}}.
\end{align*}
However, since $Y$ is a feasible solution to
P\ref{matrix_completion_2}, we have
\[
\gamma_2 (\hat{Y}) \leq \gamma_2 (Y).
\]
Applying \eqref{eq:submultiplicative} and the triangle inequality, 
\[
\gamma_2 (R) \leq \left( \gamma_2(\hat{Y} - Y) \right)^2
\leq \left( \gamma_2(\hat{Y}) + \gamma_2(Y) \right)^2
\leq 4 \gamma_2(Y)^2 .
\]
Using the triangle inequality again gives
\[
\| \mathcal{P}_E (\hat{Y} - Y) \|_F
\leq
\| \mathcal{P}_E (\hat{Y} - Z) \|_F
+
\| \mathcal{P}_E (Z - Y) \|_F
\leq 2 \delta \sqrt{|E|},
\]
taking into account the bound on the observation errors.
Because
\[
\frac{1}{nm} \| \hat{Y} - Y \|_F^2
\leq
\left| 
 \frac{1}{nm} \| \hat{Y} - Y \|_F^2 - 
 \frac{1}{|E|} \| \mathcal{P}_E (\hat{Y} - Y) \|_F^2 
\right| 
+
\frac{1}{|E|} \| \mathcal{P}_E (\hat{Y} - Y) \|_F^2 
\]
we get the final bound
\[
\frac{1}{n m} \| \hat{Y} - Y \|_F^2
\leq
4 K_G \gamma_2(Y)^2 \frac{\eta}{\sqrt{d_1 d_2}} + 4 \delta^2.
\]
\end{proof}

\subsection{Application of the spectral gap}

Theorem~\ref{thm:rect_MSE} provides a bound on the mean squared error
of the approximation $X$. 
Directly applying Theorem~\ref{cor:gap_A}, 
we obtain the following bound on 
the generalization error of the algorithm using a 
random biregular, bipartite graph:
\begin{corollary}
Let $E$ be sampled from a $\mathcal{G}(n,m, d_1, d_2)$ random graph.
For every $Y \in \R^{n \times m}$, if $\hat{Y}$ is the output of the optimization problem 
\eqref{matrix_completion}, then 
\[
	\frac{1}{n m} \| \hat{Y} - Y \|_F^2 \leq 
      c \gamma_2(Y)^2 \frac{\sqrt{d_1 - 1} + \sqrt{d_2 - 1} + \epsilon_n }{\sqrt{d_1 d_2}} ,
\]
where $c = 4 K_G \leq 7.13$ is a universal constant.
\end{corollary}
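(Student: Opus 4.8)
The plan is to read the corollary as an immediate consequence of two results already in hand: the deterministic generalization bound of Theorem~\ref{thm:rect_MSE}, which holds for \emph{every} input $Y$ once the observation graph is fixed, and the high-probability spectral gap of Theorem~\ref{cor:gap_A}(i). First I would invoke Theorem~\ref{cor:gap_A}(i): there is a sequence $\epsilon_n' \to 0$ such that, with probability tending to $1$ as $n \to \infty$, a graph $G \sim \mathcal{G}(n,m,d_1,d_2)$ has second largest adjacency eigenvalue $\eta = \lambda_2(A) \le \sqrt{d_1-1}+\sqrt{d_2-1}+\epsilon_n'$. Call this event $\mathcal{E}_n$, so that $\P(\mathcal{E}_n) \to 1$.

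On $\mathcal{E}_n$ the (now fixed) graph $G$ is a $(d_1,d_2)$-regular bipartite graph whose second eigenvalue is bounded by $\eta_n := \sqrt{d_1-1}+\sqrt{d_2-1}+\epsilon_n'$, so Theorem~\ref{thm:rect_MSE} applies to it verbatim: for every $Y \in \R^{n\times m}$, the output $\hat{Y}$ of the convex program~\eqref{matrix_completion} satisfies
\[
\frac{1}{nm}\,\| \hat{Y} - Y \|_F^2 \;\le\; c\,\gamma_2(Y)^2\,\frac{\eta}{\sqrt{d_1 d_2}},
\]
with $c = 4 K_G \le 7.13$. Since the right-hand side is monotone increasing in $\eta$ and $\eta \le \eta_n$ on $\mathcal{E}_n$, I would substitute the upper bound to get
\[
\frac{1}{nm}\,\| \hat{Y} - Y \|_F^2 \;\le\; c\,\gamma_2(Y)^2\,\frac{\sqrt{d_1-1}+\sqrt{d_2-1}+\epsilon_n'}{\sqrt{d_1 d_2}}.
\]
Setting $\epsilon_n = \epsilon_n'$ yields exactly the claimed inequality, valid on the event $\mathcal{E}_n$, i.e.\ asymptotically almost surely over the draw of $E$.

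The ``for every $Y$'' quantifier poses no difficulty: Theorem~\ref{thm:rect_MSE} already holds simultaneously over all inputs $Y$ once the graph is fixed, and the only randomness in the statement is the edge set $E \sim \mathcal{G}(n,m,d_1,d_2)$; thus the corollary asserts that, with high probability over $E$, the displayed bound holds uniformly over all $Y$. There is essentially no obstacle here beyond two bookkeeping checks — that the bound of Theorem~\ref{thm:rect_MSE} is increasing in $\eta$ so the substitution is legitimate, and that the hypothesis ``second eigenvalue bound $\eta$'' of Theorem~\ref{thm:rect_MSE} is precisely what the a.a.s.\ conclusion of Theorem~\ref{cor:gap_A}(i) supplies for $\mathcal{G}(n,m,d_1,d_2)$. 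All of the substantive work is already contained in those two theorems, so this is a short deduction rather than a new argument.
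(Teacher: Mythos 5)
Your proposal is correct and matches the paper's intent exactly: the corollary is stated in the text as a ``direct application'' of Theorem~\ref{thm:rect_MSE} together with Theorem~\ref{cor:gap_A}(i), which is precisely the two-step deduction you spell out (condition on the a.a.s.\ event that $\eta \le \sqrt{d_1-1}+\sqrt{d_2-1}+\epsilon_n$, then plug into the deterministic bound). The paper gives no separate proof beyond that remark, so your write-out is, if anything, slightly more explicit about the ordering of quantifiers than the original.
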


%%%%%%%%%%%%%%%%%%%%%%%%
%%% Acknowledgements %%%
%%%%%%%%%%%%%%%%%%%%%%%%

\section*{Acknowledgements}

We would like to thank Marina Meil\u{a} for sharing 
Proposition~\ref{thm:spurious_eig_bound} and for suggestions and comments.
Thank you also to Pierre Youssef, Simon Coste, and Subhabrata Sen for 
helpful comments and connections.
We are grateful to our anonymous reviewers for useful comments and suggestions, and at least in one case for pointing out errors 
in an earlier version of this manuscript (which have since been fixed).
K.D.H.\ was supported by the Big Data for Genomics and Neuroscience
NIH training grant, 
Washington Research Foundation postdoctoral fellowship,
as well as NSF grants DMS-1122105 and DMS-1514743.
G.B.\ was partially supported by NSF CAREER award DMS-1552267.
I.D.\ was supported by NSF DMS-1712630 and NSF CAREER award DMS-0847661.

%%%%%%%%%%%%%%%%
%%% Appendix %%%
%%%%%%%%%%%%%%%%

% \newpage
\appendix
\section{List of symbols}
\label{sec:list_of_symbols}

\begin{tabular}{cp{0.8\textwidth}}
$A$ & adjacency matrix\\
$B$ & non-backtracking matrix\\
$X^*$ & transpose of the matrix $X$\\
$\sigma(X)$ & the eigenvalue spectrum of a matrix $X$ \\
$\eta$ & second-largest eigenvalue of the adjacency matrix $A$\\
$\lambda_i (X)$ & the $i$th largest eigenvalue, in absolute value, of a matrix $X$\\ 
$\Gg$ & family of bipartite $d_1,d_2$-regular random graphs on $n,m$ vertices \\
$d$ & the maximum degree: $d = \max \{d_1, d_2\} =d_1$ without loss of generality \\
$V(G)$ & vertex set of graph or subgraph $G$ \\
$E(G)$ & edge set of a graph or subgraph $G$ \\
$\vec{E}(G)$ & oriented edge set of a graph or subgraph $G$\\
$\chi$ & tree excess of a graph or subgraph $G$: 
$\chi(G) = |V(G)| - |E(G)| + 1$ \\
$\gamma$ & a path \\
$\Gamma_{ef}^\ell$ & non-backtracking paths of length $\ell+1$ from oriented edge $e$ to $f$\\
$F_{ef}^\ell$ & 
non-backtracking, tangle-free paths of length $\ell+1$ from oriented edge $e$ to $f$\\
$T_{ef}^{\ell,j}$ & non-backtracking paths of length $\ell+1$, from $e$ to $f$, such that
the overall path is tangled but the first $j$ and last $\ell - j$ form tangle-free subpaths \\
$\| \cdot \|$ & $\ell^2$-norm of a vector or spectral norm of a matrix \\
$\| \cdot \|_F$ & Frobenius norm of a matrix
\end{tabular}

%%%%%%%%%%%%%%%%%%
%%% References %%%
%%%%%%%%%%%%%%%%%%

\bibliographystyle{plainnat}
\bibliography{library}

\begin{thebibliography}{64}
\providecommand{\natexlab}[1]{#1}
\providecommand{\url}[1]{\texttt{#1}}
\expandafter\ifx\csname urlstyle\endcsname\relax
  \providecommand{\doi}[1]{doi: #1}\else
  \providecommand{\doi}{doi: \begingroup \urlstyle{rm}\Url}\fi

\bibitem[Alon(1986)]{alon1986}
Noga Alon.
\newblock Eigenvalues and expanders.
\newblock \emph{Combinatorica}, 6\penalty0 (2):\penalty0 83--96, June 1986.
\newblock ISSN 0209-9683, 1439-6912.
\newblock \doi{10.1007/BF02579166}.

\bibitem[Angel et~al.(2007)Angel, Friedman, and Hoory]{angel2007}
Omer Angel, Joel Friedman, and Shlomo Hoory.
\newblock The {{Non}}-{{Backtracking Spectrum}} of the {{Universal Cover}} of a
  {{Graph}}.
\newblock \emph{arXiv:0712.0192 [math]}, December 2007.

\bibitem[Angel et~al.(2015)Angel, Friedman, and Hoory]{angel2015}
Omer Angel, Joel Friedman, and Shlomo Hoory.
\newblock The non-backtracking spectrum of the universal cover of a graph.
\newblock \emph{Transactions of the American Mathematical Society},
  367\penalty0 (6):\penalty0 4287--4318, 2015.
\newblock ISSN 0002-9947, 1088-6850.
\newblock \doi{10.1090/S0002-9947-2014-06255-7}.

\bibitem[Barrett et~al.(2017)Barrett, Francis, and Webb]{barrett2017}
Wayne Barrett, Amanda Francis, and Benjamin Webb.
\newblock Equitable decompositions of graphs with symmetries.
\newblock \emph{Linear Algebra and its Applications}, 513\penalty0 (Supplement
  C):\penalty0 409--434, January 2017.
\newblock ISSN 0024-3795.
\newblock \doi{10.1016/j.laa.2016.10.017}.

\bibitem[Barucca(2017)]{barucca2017}
Paolo Barucca.
\newblock Spectral partitioning in equitable graphs.
\newblock \emph{Physical Review E}, 95\penalty0 (6):\penalty0 062310, June
  2017.
\newblock \doi{10.1103/PhysRevE.95.062310}.

\bibitem[Bass(1992)]{bass1992}
Hyman Bass.
\newblock The {{Ihara}}-{{Selberg}} zeta function of a tree lattice.
\newblock \emph{International Journal of Mathematics}, 03\penalty0
  (06):\penalty0 717--797, December 1992.
\newblock ISSN 0129-167X.
\newblock \doi{10.1142/S0129167X92000357}.

\bibitem[Bender(1974)]{bender1974}
Edward~A. Bender.
\newblock The asymptotic number of non-negative integer matrices with given row
  and column sums.
\newblock \emph{Discrete Mathematics}, 10\penalty0 (2):\penalty0 217--223,
  January 1974.
\newblock ISSN 0012-365X.
\newblock \doi{10.1016/0012-365X(74)90118-6}.

\bibitem[Bender and Canfield(1978)]{bender1978}
Edward~A Bender and E.~Rodney Canfield.
\newblock The asymptotic number of labeled graphs with given degree sequences.
\newblock \emph{Journal of Combinatorial Theory, Series A}, 24\penalty0
  (3):\penalty0 296--307, May 1978.
\newblock ISSN 0097-3165.
\newblock \doi{10.1016/0097-3165(78)90059-6}.

\bibitem[Bhojanapalli and Jain(2014)]{bhojanapalli2014}
Srinadh Bhojanapalli and Prateek Jain.
\newblock Universal {{Matrix Completion}}.
\newblock In \emph{{{PMLR}}}, pages 1881--1889, January 2014.

\bibitem[Bollob\'as(1980)]{bollobas1980}
B\'ela Bollob\'as.
\newblock A {{Probabilistic Proof}} of an {{Asymptotic Formula}} for the
  {{Number}} of {{Labelled Regular Graphs}}.
\newblock \emph{European Journal of Combinatorics}, 1\penalty0 (4):\penalty0
  311--316, December 1980.
\newblock ISSN 0195-6698.
\newblock \doi{10.1016/S0195-6698(80)80030-8}.

\bibitem[Bollob\'as(2001)]{bollobas2001}
B\'ela Bollob\'as.
\newblock \emph{Random {{Graphs}}}.
\newblock {Cambridge University Press}, Cambridge, 2001.
\newblock ISBN 978-0-511-81406-8.

\bibitem[Bordenave(2015)]{bordenave2015}
Charles Bordenave.
\newblock A new proof of {{Friedman}}'s second eigenvalue {{Theorem}} and its
  extension to random lifts.
\newblock \emph{arXiv:1502.04482 [math]}, February 2015.

\bibitem[Bordenave and Lelarge(2010)]{bordenave2010}
Charles Bordenave and Marc Lelarge.
\newblock Resolvent of large random graphs.
\newblock \emph{Random Structures \& Algorithms}, 37\penalty0 (3):\penalty0
  332--352, October 2010.
\newblock ISSN 10429832.
\newblock \doi{10.1002/rsa.20313}.

\bibitem[Bordenave et~al.(2015)Bordenave, Lelarge, and
  Massouli\'e]{bordenave2015a}
Charles Bordenave, Marc Lelarge, and Laurent Massouli\'e.
\newblock Non-backtracking spectrum of random graphs: Community detection and
  non-regular {{Ramanujan}} graphs.
\newblock \emph{arXiv:1501.06087 [cs, math]}, January 2015.

\bibitem[Brito et~al.(2015)Brito, Dumitriu, Ganguly, Hoffman, and
  Tran]{brito2015}
Gerandy Brito, Ioana Dumitriu, Shirshendu Ganguly, Christopher Hoffman, and
  Linh~V. Tran.
\newblock Recovery and {{Rigidity}} in a {{Regular Stochastic Block Model}}.
\newblock \emph{arXiv:1507.00930 [math]}, July 2015.

\bibitem[Brito et~al.(2016)Brito, Dumitriu, Ganguly, Hoffman, and
  Tran]{brito2016}
Gerandy Brito, Ioana Dumitriu, Shirshendu Ganguly, Christopher Hoffman, and
  Linh~V. Tran.
\newblock Recovery and {{Rigidity}} in a {{Regular Stochastic Block Model}}.
\newblock In \emph{Proceedings of the {{Twenty}}-Seventh {{Annual ACM}}-{{SIAM
  Symposium}} on {{Discrete Algorithms}}}, SODA, pages 1589--1601,
  Philadelphia, PA, USA, 2016. {Society for Industrial and Applied
  Mathematics}.
\newblock ISBN 978-1-61197-433-1.

\bibitem[Candes and Plan(2010)]{candes2010a}
E.~J. Candes and Y.~Plan.
\newblock Matrix {{Completion With Noise}}.
\newblock \emph{Proceedings of the IEEE}, 98\penalty0 (6):\penalty0 925--936,
  June 2010.
\newblock ISSN 0018-9219.
\newblock \doi{10.1109/JPROC.2009.2035722}.

\bibitem[Cand\`es and Tao(2010)]{candes2010}
E.J. Cand\`es and T.~Tao.
\newblock The {{Power}} of {{Convex Relaxation}}: {{Near}}-{{Optimal Matrix
  Completion}}.
\newblock \emph{IEEE Transactions on Information Theory}, 56\penalty0
  (5):\penalty0 2053--2080, May 2010.
\newblock ISSN 0018-9448.
\newblock \doi{10.1109/TIT.2010.2044061}.

\bibitem[Cook(2017)]{cook2017}
Nicholas~A. Cook.
\newblock On the singularity of adjacency matrices for random regular digraphs.
\newblock \emph{Probability Theory and Related Fields}, 167\penalty0
  (1-2):\penalty0 143--200, February 2017.
\newblock ISSN 0178-8051, 1432-2064.
\newblock \doi{10.1007/s00440-015-0679-8}.

\bibitem[Coste(2017)]{coste2017}
Simon Coste.
\newblock The {{Spectral Gap}} of {{Sparse Random Digraphs}}.
\newblock \emph{arXiv:1708.00530 [math]}, August 2017.

\bibitem[Costello and Vu(2008)]{costello2008}
Kevin~P. Costello and Van~H. Vu.
\newblock The rank of random graphs.
\newblock \emph{Random Structures \& Algorithms}, 33\penalty0 (3):\penalty0
  269--285, October 2008.
\newblock ISSN 1098-2418.
\newblock \doi{10.1002/rsa.20219}.

\bibitem[De~Winter et~al.(2012)De~Winter, Schillewaert, and
  Verstraete]{dewinter2012}
Stefaan De~Winter, Jeroen Schillewaert, and Jacques Verstraete.
\newblock Large {{Incidence}}-free {{Sets}} in {{Geometries}}.
\newblock \emph{The Electronic Journal of Combinatorics}, 19\penalty0
  (4):\penalty0 P24, November 2012.
\newblock ISSN 1077-8926.

\bibitem[Deshpande et~al.(2018)Deshpande, Montanari, O'Donnell, Schramm, and
  Sen]{deshpande2018}
Yash Deshpande, Andrea Montanari, Ryan O'Donnell, Tselil Schramm, and
  Subhabrata Sen.
\newblock The threshold for {{SDP}}-refutation of random regular
  {{NAE}}-{{3SAT}}.
\newblock \emph{arXiv:1804.05230 [cs, math]}, April 2018.

\bibitem[Dumitriu and Johnson(2016)]{dumitriu2016}
Ioana Dumitriu and Tobias Johnson.
\newblock The {{Mar{\v c}enko}}-{{Pastur}} law for sparse random bipartite
  biregular graphs.
\newblock \emph{Random Structures \& Algorithms}, 48\penalty0 (2):\penalty0
  313--340, March 2016.
\newblock ISSN 1098-2418.
\newblock \doi{10.1002/rsa.20581}.

\bibitem[Feng and Li(1996)]{feng1996}
Keqin Feng and Wen-Ch'ing~Winnie Li.
\newblock Spectra of {{Hypergraphs}} and {{Applications}}.
\newblock \emph{Journal of Number Theory}, 60\penalty0 (1):\penalty0 1--22,
  September 1996.
\newblock ISSN 0022-314X.
\newblock \doi{10.1006/jnth.1996.0109}.

\bibitem[Fortunato(2010)]{fortunato2010}
Santo Fortunato.
\newblock Community detection in graphs.
\newblock \emph{Physics Reports}, 486\penalty0 (3\textendash{}5):\penalty0
  75--174, February 2010.
\newblock ISSN 0370-1573.
\newblock \doi{10.1016/j.physrep.2009.11.002}.

\bibitem[Friedman(2003)]{friedman2003a}
Joel Friedman.
\newblock Relative expanders or weakly relatively {{Ramanujan}} graphs.
\newblock \emph{Duke Mathematical Journal}, 118\penalty0 (1):\penalty0 19--35,
  May 2003.
\newblock ISSN 0012-7094, 1547-7398.
\newblock \doi{10.1215/S0012-7094-03-11812-8}.

\bibitem[Friedman(2004)]{friedman2004}
Joel Friedman.
\newblock A proof of {{Alon}}'s second eigenvalue conjecture and related
  problems.
\newblock \emph{arXiv:cs/0405020}, May 2004.

\bibitem[Friedman and Kohler(2014)]{friedman2014}
Joel Friedman and David-Emmanuel Kohler.
\newblock The {{Relativized Second Eigenvalue Conjecture}} of {{Alon}}.
\newblock \emph{arXiv:1403.3462 [cs, math]}, March 2014.

\bibitem[Gallager(1962)]{gallager1962}
R.~Gallager.
\newblock Low-density parity-check codes.
\newblock \emph{IRE Transactions on Information Theory}, 8\penalty0
  (1):\penalty0 21--28, January 1962.
\newblock ISSN 0096-1000.
\newblock \doi{10.1109/TIT.1962.1057683}.

\bibitem[Gamarnik et~al.(2017)Gamarnik, Li, and Zhang]{gamarnik2017}
David Gamarnik, Quan Li, and Hongyi Zhang.
\newblock Matrix {{Completion}} from \${{O}}(n)\$ {{Samples}} in {{Linear
  Time}}.
\newblock \emph{arXiv:1702.02267 [cs, math, stat]}, February 2017.

\bibitem[Godsil and Mohar(1988)]{godsil1988}
C.~D. Godsil and B.~Mohar.
\newblock Walk generating functions and spectral measures of infinite graphs.
\newblock \emph{Linear Algebra and its Applications}, 107\penalty0 (Supplement
  C):\penalty0 191--206, August 1988.
\newblock ISSN 0024-3795.
\newblock \doi{10.1016/0024-3795(88)90245-5}.

\bibitem[Heiman et~al.(2014)Heiman, Schechtman, and Shraibman]{heiman2014}
Eyal Heiman, Gideon Schechtman, and Adi Shraibman.
\newblock Deterministic algorithms for matrix completion.
\newblock \emph{Random Structures \& Algorithms}, 45\penalty0 (2):\penalty0
  306--317, September 2014.
\newblock ISSN 1098-2418.
\newblock \doi{10.1002/rsa.20483}.

\bibitem[Holland et~al.(1983)Holland, Laskey, and Leinhardt]{holland1983}
Paul~W. Holland, Kathryn~Blackmond Laskey, and Samuel Leinhardt.
\newblock Stochastic blockmodels: {{First}} steps.
\newblock \emph{Social Networks}, 5\penalty0 (2):\penalty0 109--137, June 1983.
\newblock ISSN 0378-8733.
\newblock \doi{10.1016/0378-8733(83)90021-7}.

\bibitem[Hoory et~al.(2006)Hoory, Linial, and Wigderson]{hoory2006}
Shlomo Hoory, Nathan Linial, and Avi Wigderson.
\newblock Expander graphs and their applications.
\newblock \emph{Bulletin of the American Mathematical Society}, 43\penalty0
  (4):\penalty0 439--561, 2006.
\newblock ISSN 0273-0979, 1088-9485.
\newblock \doi{10.1090/S0273-0979-06-01126-8}.

\bibitem[Huang(2018{\natexlab{a}})]{huang2018a}
Jiaoyang Huang.
\newblock Invertibility of adjacency matrices for random $ d $-regular graphs.
\newblock \emph{arXiv preprint arXiv:1807.06465}, 2018{\natexlab{a}}.

\bibitem[Huang(2018{\natexlab{b}})]{huang2018b}
Jiaoyang Huang.
\newblock Invertibility of adjacency matrices for random $ d $-regular directed
  graphs.
\newblock \emph{arXiv preprint arXiv:1806.01382}, 2018{\natexlab{b}}.

\bibitem[Janwa and Lal(2003)]{janwa2003}
H.~Janwa and A.~K. Lal.
\newblock On {{Tanner Codes}}: {{Minimum Distance}} and {{Decoding}}.
\newblock \emph{Applicable Algebra in Engineering, Communication and
  Computing}, 13\penalty0 (5):\penalty0 335--347, February 2003.
\newblock ISSN 0938-1279, 1432-0622.
\newblock \doi{10.1007/s00200-003-0098-4}.

\bibitem[Kempton(2016)]{kempton2016}
Mark Kempton.
\newblock Non-{{Backtracking Random Walks}} and a {{Weighted Ihara}}'s
  {{Theorem}}.
\newblock \emph{Open Journal of Discrete Mathematics}, 06\penalty0
  (04):\penalty0 207--226, 2016.
\newblock ISSN 2161-7635, 2161-7643.
\newblock \doi{10.4236/ojdm.2016.64018}.

\bibitem[Kotani and Sunada(2000)]{kotani2000}
Motoko Kotani and Toshikazu Sunada.
\newblock Zeta {{Functions}} of {{Finite Graphs}}.
\newblock \emph{Journal of mathematical sciences, the University of Tokyo},
  7\penalty0 (1):\penalty0 7--25, 2000.
\newblock ISSN 13405705.

\bibitem[Lee et~al.(2010)Lee, Recht, Srebro, Tropp, and
  Salakhutdinov]{lee2010a}
Jason~D Lee, Ben Recht, Nathan Srebro, Joel Tropp, and Russ~R Salakhutdinov.
\newblock Practical {{Large}}-{{Scale Optimization}} for {{Max}}-norm
  {{Regularization}}.
\newblock In J.~Lafferty, C.~Williams, J.~{Shawe-Taylor}, R.~Zemel, and
  A.~Culotta, editors, \emph{Advances in {{Neural Information Processing
  Systems}}}, volume~23, pages 1297--1305. {Curran Associates, Inc.}, 2010.

\bibitem[Lee et~al.(2008)Lee, Shraibman, and {\v S}palek]{lee2008}
Troy Lee, Adi Shraibman, and Robert {\v S}palek.
\newblock A {{Direct Product Theorem}} for {{Discrepancy}}.
\newblock In \emph{2008 23rd {{Annual IEEE Conference}} on {{Computational
  Complexity}}}, pages 71--80, June 2008.
\newblock \doi{10.1109/CCC.2008.25}.

\bibitem[Li and Sol\'e(1996)]{li1996}
Wen-Ch'ing~Winnie Li and Patrick Sol\'e.
\newblock Spectra of {{Regular Graphs}} and {{Hypergraphs}} and {{Orthogonal
  Polynomials}}.
\newblock \emph{European Journal of Combinatorics}, 17\penalty0 (5):\penalty0
  461--477, July 1996.
\newblock ISSN 0195-6698.
\newblock \doi{10.1006/eujc.1996.0040}.

\bibitem[Linial et~al.(2007)Linial, Mendelson, Schechtman, and
  Shraibman]{linial2007}
Nati Linial, Shahar Mendelson, Gideon Schechtman, and Adi Shraibman.
\newblock Complexity measures of sign matrices.
\newblock \emph{Combinatorica}, 27\penalty0 (4):\penalty0 439--463, July 2007.
\newblock ISSN 0209-9683, 1439-6912.
\newblock \doi{10.1007/s00493-007-2160-5}.

\bibitem[Litvak et~al.(2016)Litvak, Lytova, Tikhomirov, {Tomczak-Jaegermann},
  and Youssef]{litvak2016}
Alexander~E. Litvak, Anna Lytova, Konstantin Tikhomirov, Nicole
  {Tomczak-Jaegermann}, and Pierre Youssef.
\newblock Anti-concentration property for random digraphs and invertibility of
  their adjacency matrices.
\newblock \emph{Comptes Rendus Mathematique}, 354\penalty0 (2):\penalty0
  121--124, February 2016.
\newblock ISSN 1631-073X.
\newblock \doi{10.1016/j.crma.2015.12.002}.

\bibitem[Litvak et~al.(2017)Litvak, Lytova, Tikhomirov, {Tomczak-Jaegermann},
  and Youssef]{litvak2017}
Alexander~E. Litvak, Anna Lytova, Konstantin Tikhomirov, Nicole
  {Tomczak-Jaegermann}, and Pierre Youssef.
\newblock Adjacency matrices of random digraphs: {{Singularity}} and
  anti-concentration.
\newblock \emph{Journal of Mathematical Analysis and Applications},
  445\penalty0 (2):\penalty0 1447--1491, January 2017.
\newblock ISSN 0022-247X.
\newblock \doi{10.1016/j.jmaa.2016.08.020}.

\bibitem[Lubetzky and Sly(2010)]{lubetzky2010}
Eyal Lubetzky and Allan Sly.
\newblock Cutoff phenomena for random walks on random regular graphs.
\newblock \emph{Duke Mathematical Journal}, 153\penalty0 (3):\penalty0
  475--510, June 2010.
\newblock ISSN 0012-7094, 1547-7398.
\newblock \doi{10.1215/00127094-2010-029}.

\bibitem[Marcus et~al.(2013{\natexlab{a}})Marcus, Spielman, and
  Srivastava]{marcus2013}
A.~Marcus, D.~A. Spielman, and N.~Srivastava.
\newblock Interlacing {{Families I}}: {{Bipartite Ramanujan Graphs}} of {{All
  Degrees}}.
\newblock In \emph{2013 {{IEEE}} 54th {{Annual Symposium}} on {{Foundations}}
  of {{Computer Science}}}, pages 529--537, October 2013{\natexlab{a}}.
\newblock \doi{10.1109/FOCS.2013.63}.

\bibitem[Marcus et~al.(2013{\natexlab{b}})Marcus, Spielman, and
  Srivastava]{marcus2013a}
Adam Marcus, Daniel~A. Spielman, and Nikhil Srivastava.
\newblock Interlacing {{Families II}}: {{Mixed Characteristic Polynomials}} and
  the {{Kadison}}-{{Singer Problem}}.
\newblock \emph{arXiv:1306.3969 [math]}, June 2013{\natexlab{b}}.

\bibitem[Massoulie(2013)]{massoulie2013}
Laurent Massoulie.
\newblock Community detection thresholds and the weak {{Ramanujan}} property.
\newblock \emph{arXiv:1311.3085 [cs]}, November 2013.

\bibitem[McKay(1981)]{mckay1981}
Brendan~D McKay.
\newblock Subgraphs of random graphs with specified degrees.
\newblock \emph{Congressus Numerantium}, 33:\penalty0 213--223, 1981.

\bibitem[McKay(1984)]{mckay1984}
Brendan~D McKay.
\newblock Asymptotics for 0-1 {{Matrices}} with {{Prescribed Line Sums}}.
\newblock \emph{Enumeration and Design}, pages 225--238, 1984.

\bibitem[Mizuno and Sato(2003)]{mizuno2003}
Hirobumi Mizuno and Iwao Sato.
\newblock The semicircle law for semiregular bipartite graphs.
\newblock \emph{Journal of Combinatorial Theory, Series A}, 101\penalty0
  (2):\penalty0 174--190, February 2003.
\newblock ISSN 0097-3165.
\newblock \doi{10.1016/S0097-3165(02)00010-9}.

\bibitem[Newman and Martin(2014)]{newman2014}
M.~E.~J. Newman and Travis Martin.
\newblock Equitable random graphs.
\newblock \emph{Physical Review E}, 90\penalty0 (5):\penalty0 052824, November
  2014.
\newblock \doi{10.1103/PhysRevE.90.052824}.

\bibitem[Newman(2010)]{newman2010}
Mark Newman.
\newblock \emph{Networks: {{An Introduction}}}.
\newblock {Oxford University Press}, March 2010.
\newblock ISBN 978-0-19-150070-1.

\bibitem[Recht and R{\'e}(2013)]{recht2013}
Benjamin Recht and Christopher R{\'e}.
\newblock Parallel stochastic gradient algorithms for large-scale matrix
  completion.
\newblock \emph{Mathematical Programming Computation}, 5\penalty0 (2):\penalty0
  201--226, June 2013.
\newblock ISSN 1867-2957.
\newblock \doi{10.1007/s12532-013-0053-8}.

\bibitem[Richardson and Urbanke(2003)]{richardson2003}
T.~Richardson and R.~Urbanke.
\newblock The renaissance of {{Gallager}}'s low-density parity-check codes.
\newblock \emph{IEEE Communications Magazine}, 41\penalty0 (8):\penalty0
  126--131, August 2003.
\newblock ISSN 0163-6804.
\newblock \doi{10.1109/MCOM.2003.1222728}.

\bibitem[Richardson and Urbanke(2008)]{richardson2008}
Tom Richardson and Ruediger Urbanke.
\newblock \emph{Modern {{Coding Theory}}}.
\newblock {Cambridge University Press}, New York, NY, USA, 2008.
\newblock ISBN 978-0-521-85229-6.

\bibitem[Srebro and Shraibman(2005)]{srebro2005}
Nathan Srebro and Adi Shraibman.
\newblock Rank, {{Trace}}-{{Norm}} and {{Max}}-{{Norm}}.
\newblock In \emph{Learning {{Theory}}}, Lecture Notes in Computer Science,
  pages 545--560. {Springer, Berlin, Heidelberg}, June 2005.
\newblock ISBN 978-3-540-26556-6 978-3-540-31892-7.
\newblock \doi{10.1007/11503415_37}.

\bibitem[Srebro et~al.(2005)Srebro, Rennie, and Jaakkola]{srebro2005a}
Nathan Srebro, Jason Rennie, and Tommi~S. Jaakkola.
\newblock Maximum-{{Margin Matrix Factorization}}.
\newblock In L.~K. Saul, Y.~Weiss, and L.~Bottou, editors, \emph{Advances in
  {{Neural Information Processing Systems}} 17}, pages 1329--1336. {MIT Press},
  2005.

\bibitem[Tanner(1981)]{tanner1981}
R.~Tanner.
\newblock A recursive approach to low complexity codes.
\newblock \emph{IEEE Transactions on Information Theory}, 27\penalty0
  (5):\penalty0 533--547, September 1981.
\newblock ISSN 0018-9448.
\newblock \doi{10.1109/TIT.1981.1056404}.

\bibitem[Wan and Meil\u{a}(2015)]{wan2015}
Yali Wan and Marina Meil\u{a}.
\newblock A class of network models recoverable by spectral clustering.
\newblock In C.~Cortes, N.~D. Lawrence, D.~D. Lee, M.~Sugiyama, and R.~Garnett,
  editors, \emph{Advances in {{Neural Information Processing Systems}} 28},
  pages 3285--3293. {Curran Associates, Inc.}, 2015.

\bibitem[Watanabe and Fukumizu(2009)]{watanabe2009}
Yusuke Watanabe and Kenji Fukumizu.
\newblock Graph zeta function in the bethe free energy and loopy belief
  propagation.
\newblock \emph{Advances in Neural Information Processing Systems}, 2009.

\bibitem[Wormald(1981)]{wormald1981}
Nicholas~C. Wormald.
\newblock The asymptotic connectivity of labelled regular graphs.
\newblock \emph{Journal of Combinatorial Theory, Series B}, 31\penalty0
  (2):\penalty0 156--167, October 1981.
\newblock ISSN 0095-8956.
\newblock \doi{10.1016/S0095-8956(81)80021-4}.

\end{thebibliography}
%\printbibliography

\end{document}